\numberwithin{equation}{section}
\newcommand{\Fc}{\mathcal{F}}
\newcommand{\Gc}{\mathcal{G}}
\newcommand{\Sc}{\mathcal{S}}
\newcommand{\Rc}{\mathcal{R}}
\newcommand{\Hc}{\mathcal{H}}
\newcommand{\N}{\mathbb{N}}
\newcommand{\E}{\mathbb{E}}
\newcommand{\R}{\mathbb{R}}
\newcommand{\D}{\mathbb{D}}
\newcommand{\Pb}{\mathbb{P}}
\newcommand{\Hg}{\mathfrak{H}}
\newcommand{\Norm}[1]{\left\lVert#1\right\rVert}
\newcommand{\Abs}[1]{\left|#1\right|}
\newcommand{\Ip}[1]{\left\langle #1 \right\rangle}
\newcommand{\Indi}[1]{\mathbbm{1}_{#1}}
\newcommand{\Multicase}[4]{\left\{\begin{array}{ll}#1 & \mbox{if } #2 \\#3 & \mbox{if } #4\end{array}\right.}
\newcommand{\Sqmatrix}[4]{\left(\begin{array}{cc}#1&#2\\#3&#4\end{array}\right)}
\newcommand{\Comb}[1]{\left(\begin{array}{c}#1\end{array}\right)}
\newcounter{dummy} \numberwithin{dummy}{section}
\newtheorem{Teorema}[dummy]{Theorem}
\newtheorem{Lema}[dummy]{Lemma}
\def\1{{\rm l}\hskip -0.21truecm 1}
\begin{document}
\title[Functional CLT for the self-intersection local time of the fBm]{Functional  limit theorem for the self-intersection local time of the fractional Brownian motion}
 \date{\today}

\author{Arturo Jaramillo \and David Nualart}

\address{David Nualart and Arturo Jaramillo: Department of Mathematics, University of Kansas,   Lawrence, KS 66045, USA.}
\email{nualart@ku.edu}
\thanks{D. Nualart was supported by the NSF grant  DMS1512891.}

\subjclass[2010]{60G05; 60H07; 60G15; 60F17}

\date{\today}

\keywords{Fractional Brownian motion, self-intersection local time, Wiener chaos expansion, central limit theorem.}

\begin{abstract}
Let $\{B_{t}\}_{t\geq0}$ be a $d$-dimensional fractional Brownian motion with Hurst parameter $0<H<1$, where $d\geq2$. Consider the approximation of the self-intersection local time of $B$, defined as 
\begin{align*}
I_{T}^{\varepsilon}
  &=\int_{0}^{T}\int_{0}^{t}p_{\varepsilon}(B_{t}-B_{s})dsdt,
\end{align*}  
where $p_\varepsilon(x)$ is the heat kernel. We prove that the process $\{I_{T}^{\varepsilon}-\E\left[I_{T}^{\varepsilon}\right]\}_{T\geq0}$, rescaled by a suitable normalization,  converges in law to a constant multiple of a standard Brownian motion for $\frac{3}{2d}<H\leq\frac{3}{4}$ and to a multiple of a sum of independent Hermite processes for $\frac{3}{4}<H<1$, in the space $C[0,\infty)$, endowed with the topology of uniform convergence on compacts.
\end{abstract}

\maketitle

\section{Introduction}
Let $B=\{B_{t}\}_{t\geq0}$ be a $d$-dimensional fractional Brownian motion of Hurst parameter $H\in(0,1)$. Fix $T>0$. The self-intersection local time of $B$ in the interval $[0,T]$ is formally defined by
\[
I:=\int_0^T \int_0^t \delta(B_t-B_s) d s dt,
\]
where $\delta$ denotes the Dirac delta function. A rigorous definition of this random variable may be obtained by approximating the delta function by the heat kernel
\begin{align*}
p_{\varepsilon}(x)
  &:=(2\pi\varepsilon)^{-\frac{d}{2}}\exp\left\{-\frac{1}{2\varepsilon}\Norm{x}^{2}\right\},~~~~x\in\R^{d}.
\end{align*}
In the case $H=\frac{1}{2}$, $B$ is a classical Brownian motion, and its self-intersection local time has been studied by many authors (see Albeverio (1995), Hu (1996), Imkeller, P\'erez-Abreu and Vives (1995), Varadhan (1969), Yor (1985) and the references therein). In the case $H\neq\frac{1}{2}$, the self-intersection local time for $B$ was first studied by Rosen in \cite{Ro} in the planar case and it was further investigated using techniques from Malliavin calculus by Hu and Nualart in \cite{HuNu}. In particular, it was proved that the approximation of the self-intersection local time of $B$ in $[0,T]$, defined by  
\begin{align}\label{eq:I}
I_{T}^{\varepsilon}
  &:=\int_{0}^{T}\int_{0}^{t}p_{\varepsilon}(B_{t}-B_{s}) d s dt,
\end{align}
converges in $L^{2}(\Omega)$ when $H<\frac{1}{d}$. Furthermore, it was shown that when $\frac{1}{d}\leq H<\frac{3}{2d}$, $I_{T}^{\varepsilon}-\E\left[I_{T}^{\varepsilon}\right]$ to converges in $L^{2}(\Omega)$, and for the case $\frac{3}{2d}< H<\frac{3}{4}$, the following limit theorem holds (see \cite[Theorem~2]{HuNu}).
\begin{Teorema}\label{eq:CLTHuNu}
If $\frac{3}{2d}<H<\frac{3}{4}$, then $\varepsilon^{\frac{d}{2}-\frac{3}{4H}}(I_{T}^{\varepsilon}-\E\left[I_{T}^{\varepsilon}\right])$  converges in law to a centered Gaussian distribution with variance $\sigma^{2}T$, as $\varepsilon\rightarrow0$, where the constant $\sigma^{2}$ is given by \eqref{eq:sigmadef}.
\end{Teorema}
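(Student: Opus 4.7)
The plan is to perform a Wiener chaos expansion of $I_T^\varepsilon-\E[I_T^\varepsilon]$ and analyze the asymptotics of each chaos component separately. To begin, I would use the Fourier representation
\[p_\varepsilon(x)=(2\pi)^{-d}\int_{\R^d}e^{i\xi\cdot x-\varepsilon\Norm{\xi}^{2}/2}\,d\xi,\]
together with the Wick-type identity $e^{iI_1(h)}=e^{-\Norm{h}^{2}/2}\sum_{n\ge 0}\frac{i^n}{n!}I_n(h^{\otimes n})$ applied to the element $h$ of the Hilbert space associated with $B$ for which $I_1(h)=\xi\cdot(B_t-B_s)$ and $\Norm{h}^{2}=\Norm{\xi}^{2}|t-s|^{2H}$. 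Interchanging the order of integration yields a chaos decomposition
\[I_T^\varepsilon-\E[I_T^\varepsilon]=\sum_{n\ge 2,\,n\text{ even}}J_n^\varepsilon(T),\]
where $J_n^\varepsilon(T)$ lies in the $n$-th Wiener chaos generated by $B$; odd-$n$ projections vanish by the symmetry $\xi\mapsto -\xi$ in the Fourier integral.

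Next, I would compute $\E[|J_n^\varepsilon(T)|^{2}]$ via the isometry of multiple integrals. Once the two $\xi$-integrations against the remaining Gaussian factors are carried out, each term reduces to a fourfold integral over $\{0\le s\le t\le T,\;0\le s'\le t'\le T\}$ whose integrand involves the $n$-th power of the scalar covariance $\mu(s,t,s',t'):=\E[(B^{1}_{t}-B^{1}_{s})(B^{1}_{t'}-B^{1}_{s'})]$ together with the reciprocal square root of the determinant of a $2\times 2$ matrix of the form $\tilde\Sigma+\varepsilon\,\mathrm{Id}$ raised to the $d$-th power. In the regime $\frac{3}{2d}<H<\frac{3}{4}$ the dominant contribution comes from the diagonal region where $(s',t')$ is a small perturbation of $(s,t)$; rescaling this perturbation by $\varepsilon^{1/(2H)}$ extracts a prefactor $\varepsilon^{3/(2H)-d}$ and a finite limiting integral. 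Summing over $n$, with a uniform majorant to justify the exchange of limit and summation, then gives $\varepsilon^{d-3/(2H)}\,\mathrm{Var}(I_T^\varepsilon)\to\sigma^{2}T$.

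Third, to upgrade the variance asymptotic to convergence in law I would invoke the Peccati-Tudor criterion for vectors of multiple stochastic integrals: for every even $n$, show that the contractions $\Norm{f_n^\varepsilon\otimes_r f_n^\varepsilon}$ of the symmetric kernel $f_n^\varepsilon$ of $J_n^\varepsilon(T)$ vanish at the correct rate for $r=1,\dots,n-1$. Since distinct chaoses are automatically orthogonal, this yields joint convergence of $(\varepsilon^{d/2-3/(4H)}J_{2}^\varepsilon(T),\ldots,\varepsilon^{d/2-3/(4H)}J_{2M}^\varepsilon(T))$ to a centered Gaussian vector for every fixed $M$; combined with the tail bound $\lim_{N\to\infty}\limsup_{\varepsilon\to 0}\sum_{n>N}\varepsilon^{d-3/(2H)}\E[|J_n^\varepsilon(T)|^{2}]=0$ arising in step two, this delivers $\varepsilon^{d/2-3/(4H)}(I_T^\varepsilon-\E[I_T^\varepsilon])\Rightarrow \mathcal{N}(0,\sigma^{2}T)$.

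The main technical obstacle is sharp, $n$-uniform control on both the determinantal factors and the contractions. The constraint $H>\frac{3}{2d}$ is exactly what guarantees finiteness of the rescaled limiting integral, reflecting the tension between the singularity of the density of the fBm increments near the diagonal (controlled via the local nondeterminism property of $B$) and the $\varepsilon$-regularization; the constraint $H<\frac{3}{4}$, on the other hand, is what prevents the off-diagonal contribution of $\mu(s,t,s',t')$ from producing non-Brownian behavior. Threading both cutoffs through bounds that are uniform in $n$, so that the chaos series may be summed and all contractions simultaneously controlled, is the core difficulty.
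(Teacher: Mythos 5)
Your plan is essentially the paper's own argument: expand $I_T^\varepsilon-\E[I_T^\varepsilon]$ into even Wiener chaoses with the explicit kernels coming from the heat kernel, obtain the variance asymptotics $\varepsilon^{d-\frac{3}{2H}}\mathrm{Var}(I_T^\varepsilon)\to\sigma^2 T$ by the $\varepsilon^{-\frac{1}{2H}}$ rescaling with an integrable majorant (Lemma \ref{lem:finiteFGintegral}, requiring $\frac{3}{2d}<H<\frac{3}{4}$), and conclude via the Peccati--Tudor criterion (Theorem \ref{Fourthmoment}) together with the per-chaos Gaussian limits and the uniform tail bound on $\sum_{q\geq Q}(2q)!\Norm{h^{\varepsilon}_{2q,T}}^2$. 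The only difference is cosmetic: you propose to reprove the per-chaos central limit theorems by contraction estimates uniform in the chaos order, whereas the paper imports exactly those two facts from \cite[Theorem~2]{HuNu}.
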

The case $H=\frac{3}{2d}$ was addressed as well in \cite{{HuNu}}, where it was shown that the sequence $(\log(1/\varepsilon))^{-\frac{1}{2}}(I_{T}^{\varepsilon}-\E\left[I_{T}^{\varepsilon}\right])$  converges in law to a centered Gaussian distribution with variance $\sigma_{log}^2$, as $\varepsilon\rightarrow0$, where $\sigma_{log}^2$ is the constant given by \cite[Equation~(42)]{HuNu}.

The aim of this paper is to prove a functional version of Theorem \ref{eq:CLTHuNu}, and extend it to the case $\frac{3}{4}\leq H<1$. Our main results are Theorems \ref{Teo:convergence},  \ref{Teo:convergenceHermite}  and \ref{Teo:convergencelog}.
\begin{Teorema}\label{Teo:convergence}
Let $\frac{3}{2d}<H<\frac{3}{4}$, $d\geq2$ be fixed. Then, 
\begin{align}\label{conv:I1}
\{\varepsilon^{\frac{d}{2}-\frac{3}{4H}}(I_{T}^{\varepsilon}-\E\left[I_{T}^{\varepsilon}\right])\}_{T\geq0}
  &\stackrel{Law}{\rightarrow}\{\sigma W_{T}\}_{T\geq0},
\end{align}
in the space $C[0,\infty)$, endowed with the topology of uniform convergence on compact sets, where $W$ is a standard Brownian motion, and the constant $\sigma^{2}$ is given by \eqref{eq:sigmadef}. 
\end{Teorema}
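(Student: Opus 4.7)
The plan is to combine the one-dimensional central limit theorem of Theorem \ref{eq:CLTHuNu} with the two classical ingredients for upgrading a CLT to a functional limit in $C[0,\infty)$: convergence of finite-dimensional distributions and tightness. Throughout, set $Y_T^\varepsilon := \varepsilon^{d/2-3/(4H)}(I_T^\varepsilon - \E[I_T^\varepsilon])$.

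For the finite-dimensional distributions I would use the Cram\'er--Wold device together with the chaos decomposition $Y_T^\varepsilon = \sum_{q\geq 1} J_q Y_T^\varepsilon$, in which only even $q$ contribute by symmetry of $p_\varepsilon$. Given $0 = T_0 < T_1 < \cdots < T_k$, write $V_i^\varepsilon := Y_{T_i}^\varepsilon - Y_{T_{i-1}}^\varepsilon$; it suffices to show that the vector $V^\varepsilon = (V_1^\varepsilon, \ldots, V_k^\varepsilon)$ converges in law to a centered Gaussian vector with diagonal covariance $\sigma^2 \mathrm{diag}(T_i - T_{i-1})$. For each fixed chaos order $q$, the Peccati--Tudor theorem reduces this to (a) convergence of the covariance matrix of $(J_q V_1^\varepsilon, \ldots, J_q V_k^\varepsilon)$ and (b) vanishing of the fourth cumulant of each $J_q V_i^\varepsilon$. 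Both are variants of computations already carried out in \cite{HuNu} for the 1D CLT, using the Fourier representation of $p_\varepsilon$ and the resulting explicit chaos kernels; the cross-covariances between disjoint time intervals should vanish after normalization. One then passes from fixed $q$ to the full sum by showing that $\sum_{q > Q} J_q V^\varepsilon$ is negligible in $L^2$, uniformly in $\varepsilon$, letting $Q\to\infty$ at the end.

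For tightness I would apply the Kolmogorov--Chentsov criterion, which for a Brownian motion limit is satisfied as soon as we establish a bound of the form
\[
\E\big[|Y_T^\varepsilon - Y_S^\varepsilon|^4\big] \leq C_N (T-S)^2
\]
uniformly in $\varepsilon$ and in $0 \leq S \leq T \leq N$. The starting point is the splitting
\[
I_T^\varepsilon - I_S^\varepsilon = \int_S^T \int_S^t p_\varepsilon(B_t - B_s)\, ds\, dt + \int_S^T \int_0^S p_\varepsilon(B_t - B_s)\, ds\, dt.
\]
By stationarity of the fBm increments the first (diagonal) piece is equal in law to $I_{T-S}^\varepsilon$, and its centered second moment is already controlled by the variance asymptotics that drive the 1D CLT. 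The second (off-diagonal) piece is analyzed via its chaos decomposition. Hypercontractivity on each fixed chaos upgrades $L^2$ bounds to $L^4$ bounds, while the tail in $q$ is again handled by the uniform $L^2$ estimate used in the finite-dimensional step.

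The main obstacle I expect is producing the sharp $L^2$ bound on the off-diagonal increment, uniformly in $\varepsilon$ and with the correct $(T-S)$ scaling. Unlike the diagonal piece, that integral is not a rescaled copy of $I^\varepsilon$, and the associated chaos kernels are governed by the fBm self-covariance integrated over the mixed rectangle $[0,S] \times [S,T]$, where long-range correlations across the boundary $s = S$ play an essential role, especially for $H$ close to the critical value $3/4$ where some exponents in the Hu--Nualart estimates become nearly borderline. Obtaining uniform control therefore demands a careful decomposition of the covariance kernel in the spirit of \cite{HuNu}, now adapted to the time-dependent increment setting.
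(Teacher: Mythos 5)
Your plan for the finite-dimensional distributions is essentially the paper's: Peccati--Tudor applied to the chaos decomposition, with covariance asymptotics for increments (the paper's Theorem \ref{teo:covchaos}), the per-chaos asymptotic normality and the uniform control of the chaos tail both imported from \cite{HuNu}. That part is sound.

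The tightness argument, however, has a genuine gap, and it is exactly the obstacle this paper was written to circumvent. You propose to get the Kolmogorov--Chentsov bound $\E[|Y_T^\varepsilon-Y_S^\varepsilon|^4]\leq C(T-S)^2$ by applying hypercontractivity chaos by chaos and controlling the tail in $q$ with the $L^2$ estimates. This cannot work as stated. Hypercontractivity on the chaos of order $2q$ gives $\Norm{J_{2q}}_{L^4(\Omega)}\leq 3^{q}\Norm{J_{2q}}_{L^2(\Omega)}$, a constant growing geometrically in $q$, whereas the $L^2$ norms $\sigma_q\sqrt{T-S}$ decay only polynomially in $q$: the identity $F_{1,x}=\sum_q\beta_q G^{(q)}_{1,x}$ together with the fact that the correlation ratio $\mu(x,u_1,u_2)^2/((1+u_1^{2H})(1+u_2^{2H}))$ tends to $1$ on the diagonal region ($x$ small, $u_1\approx u_2$ large) forces $\sigma_q^2$ to decay like a power of $q$, not geometrically (summability of $\sum_q\sigma_q^2$ is borderline polynomial, which is why $Hd>\frac32$ is needed at all). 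Hence $\sum_q 3^q\Norm{J_{2q}}_{L^2}$ diverges, and the triangle inequality in $L^4$ across chaoses fails. Nor can the tail $\sum_{q>Q}J_{2q}$ be handled by the $L^2$ bounds from the fdd step: an $L^2$ bound at fixed $T$ controls neither an $L^4$ norm nor the modulus of continuity needed for tightness. A further sign that the fourth moment is the wrong target: the integral that naturally controls the $p$-th moment of the increment, $\int_{\R_+^3}\mu^2 u_1^{-2H}u_2^{-2H}\Theta_1^{-d/p}$, is finite only for $p<\frac{4Hd}{3}$ (the paper's Lemma \ref{lem:inegraltightness}), and $p=4$ would require $Hd>3$, false throughout the theorem's range for $d\leq4$.

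The paper's solution is different in kind and is its main technical novelty: writing $Z_\varepsilon=-\delta DL^{-1}Z_\varepsilon$, noting $\E[DL^{-1}Z_\varepsilon]=0$, and applying Meyer's inequality \eqref{eq:Meyer} to get $\Norm{Z_\varepsilon}_{L^p(\Omega)}\leq c_p\Norm{D^2L^{-1}Z_\varepsilon}_{L^p(\Omega;(\Hg^d)^{\otimes2})}$ for $p>2$ \emph{close to} $2$ (any $2<p<\frac{4Hd}{3}$, which exists precisely because $Hd>\frac32$). Minkowski's inequality, the representation $L^{-1}=\int_0^\infty P_\theta\,d\theta$, Mehler's formula and explicit Gaussian computations then bound everything by the integral of Lemma \ref{lem:inegraltightness}, with the factor $(T_2-T_1)$ coming out of the time integration. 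This avoids any chaos-by-chaos estimate and any fourth moment. If you want to keep your diagonal/off-diagonal splitting, note it does not help: the diagonal piece is indeed a copy of $I^\varepsilon_{T-S}$ in law, but its fourth moment is precisely the quantity the paper declares intractable, so the splitting only relocates the problem.
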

We briefly outline the proof of \eqref{conv:I1}. The proof of the convergence of the finite-dimensional distributions, is based on the application of  a multivariate central limit theorem established by Peccati and Tudor in \cite{PeTu} (see Section \ref{section:4mthm}), and follows ideas similar to those presented in \cite{HuNu}.  On the other hand, proving  the tightness property for the process 
$$\widetilde{I}^{\varepsilon}_{T}:=\varepsilon^{\frac{d}{2}-\frac{3}{4H}}(I_{T}^{\varepsilon}-\E\left[I_{T}^{\varepsilon}\right]),$$ 
presents a great technical difficulty. In fact,  by the Billingsley criterion (see \cite[Theorem~12.3]{Billin}), the tightness property can be obtained by showing that there exists  $p>2$, such that for every $0\le T_{1}\le T_{2}$,
\begin{align}\label{eq:Billinglsleyintro}
\E\left[\Abs{ \widetilde{I}_{T_{2}}^{\varepsilon}-\widetilde{I}_{T_{1}}^{\varepsilon}}^{p}\right]\leq C\Abs{T_{2}-T_{1}}^{\frac p2},
\end{align} 
for some constant $C>0$ independent of $T_{1},T_{2}$ and $\varepsilon$. The problem of finding a bound like \eqref{eq:Billinglsleyintro} comes from the fact that the smallest even integer such that $p>2$ is $p=4$, and a direct  computation of the moment of order four   $  \E\left[ |  \widetilde{I} ^{\varepsilon}_{T_2} -\widetilde{I}_{T_{1}}^{\varepsilon}| ^{4}\right]$  is too complicated to be handled.
To overcome this difficulty,  in this paper we introduce a new approach to prove tightness based on the techniques of  Malliavin calculus.
Let us describe the main ingredients of this approach. 

 First, we write the centered random variable
$Z:=\widetilde{I}_{T_{2}}^{\varepsilon}-\widetilde{I}_{T_{1}}^{\varepsilon}$ as
\[
Z=- \delta D L^{-1} Z,
\]
where $\delta$, $D$ and $L$ are the basic operators in Malliavin calculus.   Then, taking into consideration that $\E\left[DL^{-1}Z\right]=0$ we apply 
  Meyer's inequalities to obtain a bound of the type
  \begin{align}\label{eq:ZpboundMeyer}
  \|Z\|_{L^{p}(\Omega)} \le  c_p\| D^2 L^{-1} Z\|_{L^{p}(\Omega;(\Hg^{d})^{\otimes 2})},
  \end{align}
  for any $p>1$, where the Hilbert space $\Hg$ is defined in Section \ref{subsec:chaos}. Notice that
\[
Z= \varepsilon^{\frac{d}{2}-\frac{3}{4H}}  \int_{0\le s \le t ,  T_1\le t \le  T_2} 
\left(p_\varepsilon (B_t-B_s) -\E\left[  p_\varepsilon  (B_t-B_s)\right]\right) dsdt.
\]
Applying Minkowski's inequality and \eqref{eq:ZpboundMeyer}, we obtain
\[
  \|Z\|_{L^p(\Omega)} \le  c_p \varepsilon^{\frac{d}{2}-\frac{3}{4H}}    \int_{0\le s \le t ,  T_1\le t \le  T_2}  \| D^2 L^{-1} p_\varepsilon (B_t-B_s)\|_p dsdt.
  \]
Then, we get the desired estimate by choosing $p>2$ close to 2,  using the self-similarity of the fractional Brownian motion, the expression of the operator $L^{-1}$ in terms of the Ornstein-Uhlenbeck semigroup, Mehler's formula and Gaussian computations. In this way, we reduce the problem to showing the finiteness of an integral (see Lemma \ref{lem:inegraltightness}),  similar to the integral appearing in the proof  of  the convergence of the variances. It is worth mentioning that this approach for proving tightness has not  been used before, and has its own interest. 

In the case $H>\frac{3}{4}$, the process $\varepsilon^{\frac{d}{2}-\frac{3}{2H}+1}(I_{T}^{\varepsilon}-\E\left[I_{T}^{\varepsilon}\right])$ also converges in law, in the topology of $C[0,\infty)$, but the limit is no longer a multiple of a Brownian motion, but a multiple of a sum of independent Hermite processes of order two. More precisely, if $\{X_{T}^{j}\}_{T\geq0}$ denotes the second order Hermite process, with respect to $\{B_{t}^{(j)}\}_{t\geq0}$, defined in Section \ref{subsec:chaos}, then $\{\widetilde{I}^{\varepsilon}\}_{\varepsilon\in(0,1)}$ satisfies the following limit theorem
\begin{Teorema}\label{Teo:convergenceHermite}
Let $H>\frac{3}{4}$, and $d\geq2$ be fixed. Then, for every $T>0$, 
\begin{align}\label{conv:I2L2}
\varepsilon^{\frac{d}{2}-\frac{3}{2H}+1}(I_{T}^{\varepsilon}-\E\left[I_{T}^{\varepsilon}\right])
  &\stackrel{L^{2}(\Omega)}{\rightarrow}-\Lambda \sum_{j=1}^{d}X_{T}^{j},
\end{align}
where the constant $\Lambda$ is defined by 
\begin{align}\label{eq:Lambdadef}
\Lambda
  :=\frac{(2\pi)^{-\frac{d}{2}}}{2}\int_{0}^{\infty}(1+u^{2H})^{-\frac{d}{2}-1}u^{2}du.
\end{align}
In addition,
\begin{align}\label{conv:I2}
\{\varepsilon^{\frac{d}{2}-\frac{3}{2H}+1}(I_{T}^{\varepsilon}-\E\left[I_{T}^{\varepsilon}\right])\}_{T\geq0}
  &\stackrel{Law}{\rightarrow}\{-\Lambda \sum_{j=1}^{d}X_{T}^{j}\}_{T\geq0},
\end{align}
in the space $C[0,\infty)$, endowed with the topology of uniform convergence on compact  sets.
\end{Teorema}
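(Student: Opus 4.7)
The strategy is a Wiener chaos expansion of $I_T^\varepsilon-\E[I_T^\varepsilon]$, with the second chaos providing the limit and the higher chaoses shown to vanish, followed by a Kolmogorov-type tightness argument on $C[0,\infty)$.

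\emph{Chaos expansion.} Since $B$ has independent components, $p_\varepsilon(B_t-B_s)=\prod_{j=1}^d p_\varepsilon^{(1)}(B_t^{(j)}-B_s^{(j)})$, and a one-dimensional Gaussian computation with $X\sim N(0,\sigma^2)$, $\sigma^2=(t-s)^{2H}$, yields the Hermite expansion
\[
p_\varepsilon^{(1)}(X)=\frac{1}{\sqrt{2\pi(\varepsilon+\sigma^2)}}\sum_{k\ge 0}\frac{(-1)^k}{k!}\left(\frac{\sigma^2}{2(\varepsilon+\sigma^2)}\right)^{k}H_{2k}\!\left(\frac{X}{\sigma}\right).
\]
Multiplying the $d$ one-dimensional expansions and dropping the mean term ($k_1=\cdots=k_d=0$) gives a chaos decomposition of $p_\varepsilon(B_t-B_s)-\E[p_\varepsilon(B_t-B_s)]$ whose second-chaos projection is
\[
\Pi_2[p_\varepsilon(B_t-B_s)]=-\frac{1}{2(2\pi)^{d/2}(\varepsilon+(t-s)^{2H})^{d/2+1}}\sum_{j=1}^d\Bigl((B_t^{(j)}-B_s^{(j)})^2-(t-s)^{2H}\Bigr).
\]

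\emph{Convergence of the second chaos in $L^2$.} After integration in $(s,t)$, the Wiener--It\^o isometry expresses $\E[(\varepsilon^{d/2-3/(2H)+1}\Pi_2[I_T^\varepsilon])^2]$ as a quadruple integral involving the square of the fBm covariance of increments. In the regime $h_i:=t_i-s_i\ll|t_1-t_2|$ one has the expansion
\[
\E\bigl[(B_{t_1}^{(j)}-B_{s_1}^{(j)})(B_{t_2}^{(j)}-B_{s_2}^{(j)})\bigr]\simeq H(2H-1)h_1h_2|t_1-t_2|^{2H-2},
\]
and the rescaling $h_i=\varepsilon^{1/(2H)}u_i$ makes each factor $\varepsilon^{d/2+1-3/(2H)}h_i^2(\varepsilon+h_i^{2H})^{-d/2-1}$ concentrate on the diagonal with integrated mass $\int_0^\infty u^2(1+u^{2H})^{-d/2-1}du=2(2\pi)^{d/2}\Lambda$ per factor, by \eqref{eq:Lambdadef}. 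A dominated convergence argument then identifies the $L^2$ limit of $\varepsilon^{d/2-3/(2H)+1}\Pi_2[I_T^\varepsilon]$ as $-\Lambda\sum_jX_T^j$, first on the squared norm and then on the cross term with the putative limit.

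\emph{Vanishing of higher chaoses.} For $q\ge 2$, $\Pi_{2q}[p_\varepsilon(B_t-B_s)]$ carries an additional factor $\bigl((t-s)^{2H}/(\varepsilon+(t-s)^{2H})\bigr)^{q-1}$ compared with $\Pi_2$. Applying the isometry and repeating the rescaling, this extra weight makes $\varepsilon^{d/2-3/(2H)+1}\|\Pi_{2q}[I_T^\varepsilon]\|_{L^2}$ bounded by a strictly positive power of $\varepsilon$, uniformly in $q\ge 2$. I expect this step to be the main technical obstacle, since ruling out resonances between the long-range singularity of the fBm covariance (active precisely because $H>\frac{3}{4}$) and the higher Hermite ranks requires careful bookkeeping of the different regimes in the multi-dimensional kernel integrals.

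\emph{Tightness and functional convergence.} The previous two steps yield \eqref{conv:I2L2}, and by linearity joint $L^2$-convergence at any finite collection of times, giving convergence of finite-dimensional distributions to those of $-\Lambda\sum_jX_T^j$. Denoting by $Y_T^\varepsilon$ the process on the left-hand side of \eqref{conv:I2}, and repeating the variance estimate of the second step with the $t_i$-integration restricted to $[T_1,T_2]$, one obtains
\[
\E\bigl[(Y_{T_2}^\varepsilon-Y_{T_1}^\varepsilon)^2\bigr]\le C(T_2-T_1)^{4H-2},
\]
uniformly in $\varepsilon$. Since $4H-2>1$ when $H>\frac{3}{4}$, Kolmogorov's continuity criterion delivers tightness of $Y^\varepsilon$ in $C[0,\infty)$ and hence the functional convergence \eqref{conv:I2}.
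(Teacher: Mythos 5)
Your skeleton for the $L^2$ part coincides with the paper's: Hermite expansion of the heat kernel, second chaos converging to the Hermite process via the kernels characterizing $\pi_T^j$, and vanishing of the higher chaoses. But the higher-chaos step, which you flag as the main obstacle, is left genuinely open, and the mechanism you propose does not suffice: the factor $\bigl((t-s)^{2H}/(\varepsilon+(t-s)^{2H})\bigr)^{q-1}$ is bounded by $1$ and gives no decay whatsoever in the region $t-s\gg\varepsilon^{1/(2H)}$, which is exactly where the long-range singularity lives when $H>\frac34$ (note that for $H$ close to $1$ even the chaoses $2q$ with $(4H-4)q>-1$ retain a divergent long-range integral, so intermediate chaoses are not harmless). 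The actual gain comes from the extra powers of $\mu(x,u_1,u_2)$ improving the integrability in the time-lag variable $x$, and the paper's Lemma \ref{lem:chaos4tozero} extracts it by bounding the entire chaos remainder by $C\mu^4(\varepsilon+u_1^{2H})^{-2}(\varepsilon+u_2^{2H})^{-2}\Theta_\varepsilon^{-d/2}$, splitting $[0,T]^3$ into the regions \eqref{eq:Stildedef}, and invoking local nondeterminism (Lemma \ref{lem:local_non_determinism}) together with a Young-inequality choice of exponents in the region $\widetilde{\Sc}_3$. None of this bookkeeping is replaced by anything in your sketch.

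The more serious gap is tightness. You assert $\E[(Y^\varepsilon_{T_2}-Y^\varepsilon_{T_1})^2]\le C(T_2-T_1)^{4H-2}$ uniformly in $\varepsilon$ ``by repeating the variance estimate,'' but that estimate only covers the second chaos in the long-range regime $h_i\ll|t_1-t_2|$. The near-diagonal contribution and the higher-chaos remainder contribute terms of order $\varepsilon^{\delta}(T_2-T_1)$, linear in the increment length once $T_2-T_1\gtrsim\varepsilon^{1/(2H)}$, and dominating such a term by $(T_2-T_1)^{4H-2}$ requires precisely $\delta\ge\frac{4H-3}{2H}$. For the near-diagonal second-chaos piece the exponents match exactly ($\delta=2-\frac{3}{2H}$), but for the remainder the region-$\Sc_3$ estimate of Lemma \ref{lem:chaos4tozero} only yields $\delta=dy<\frac{4H-3}{4H}$, which falls short of the threshold, so your claimed bound is not obtainable from the stated ingredients and may simply fail for small increments. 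This is why the paper does not attempt a second-moment Kolmogorov argument at all: it writes $Z_\varepsilon=-\delta DL^{-1}Z_\varepsilon$, applies Meyer's inequality \eqref{eq:Meyer} and Mehler's formula \eqref{eq:Mehler} to get $\E[|\varepsilon^{\frac d2-\frac{3}{2H}+1}Z_\varepsilon|^p]\le C(T_2-T_1)^{p/2}$ for some $p>2$ (with the uniform integral bound of Lemma \ref{lem:inegraltightness2} doing the work), and then concludes by Billingsley's criterion. Your tightness step therefore needs either this Malliavin-calculus machinery or a substantially sharper increment analysis than the one you invoke.
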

We briefly outline the proof of Theorem \ref{Teo:convergenceHermite}. The convergence \eqref{conv:I2L2} is obtained from the chaotic decomposition of $I^{\varepsilon}_{T}$. It turns out that the chaos of order two completely determines the asymptotic behavior of $\varepsilon^{\frac{d}{2}-\frac{3}{2H}+1}(I_{T}^{\varepsilon}-\E\left[I_{T}^{\varepsilon}\right])$, and consequently, \eqref{conv:I2L2} can be obtained by the characterization of the Hermite processes presented in \cite{NoNuTu}, applied to the second chaotic component of $I_{T}^{\varepsilon}$. Similarly to the case $\frac{3}{2d}<H<\frac{3}{4}$, we show that the sequence $\varepsilon^{\frac{d}{2}-\frac{3}{2H}+1}(I_{T}^{\varepsilon}-\E\left[I_{T}^{\varepsilon}\right])$ is tight, which proves the convergence in law \eqref{conv:I2}.

The technique we use to prove tightness doesn't work for the case $Hd\leq \frac{3}{2}$, so the convergence in law of $\{\log(1/\varepsilon)^{-\frac{1}{2}}(I_{T}^{\varepsilon}-\E\left[I_{T}^{\varepsilon}\right])\}_{T\geq0}$ to a scalar multiple of a Brownian motion for the case $Hd=\frac{3}{2}$ still remains open. Nevertheless, for the critical case $H=\frac{3}{4}$ and  $d\geq 3$, the technique does work, and we prove the following limit theorem
\begin{Teorema}\label{Teo:convergencelog}
Suppose $H=\frac{3}{4}$ and $d\geq3$. Then, 
\begin{align}\label{conv:I1log}
\{\frac{\varepsilon^{\frac{d}{2}-1}}{\sqrt{\log(1/\varepsilon)}}(I_{T}^{\varepsilon}-\E\left[I_{T}^{\varepsilon}\right])\}_{T\geq0}
  &\stackrel{Law}{\rightarrow}\{\rho W_{T}\}_{T\geq0},
\end{align}
in the space $C[0,\infty)$, endowed with the topology of uniform convergence on compact sets, where $W$ is a standard Brownian motion, and the constant $\rho$ is defined by \eqref{eq:rhodeflog}. 
\end{Teorema}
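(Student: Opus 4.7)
The plan is to imitate the two-step strategy used for Theorem \ref{Teo:convergence}: first, establish the convergence of finite-dimensional distributions via the multivariate fourth-moment theorem of Peccati--Tudor applied to the dominant chaotic component of $I_T^\varepsilon - \E[I_T^\varepsilon]$; second, establish tightness in $C[0,\infty)$ via the Malliavin-calculus bound based on the identity $Z = -\delta D L^{-1} Z$ combined with Meyer's inequalities, as spelled out in the outline of Theorem \ref{Teo:convergence}.

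For the finite-dimensional convergence, I would expand $I_T^\varepsilon - \E[I_T^\varepsilon] = \sum_{q \geq 1} J_{2q}(I_T^\varepsilon)$ into Wiener chaoses and show that, under the normalization $\varepsilon^{d/2-1}/\sqrt{\log(1/\varepsilon)}$ appropriate to the critical Hurst parameter $H = 3/4$, only the second chaos contributes asymptotically: the variance of $J_2(I_T^\varepsilon)$ grows like a constant multiple of $T \log(1/\varepsilon)\, \varepsilon^{-(d-2)}$, while the variance of $J_{2q}(I_T^\varepsilon)$ for $q \geq 2$ turns out to be of strictly smaller order. One then computes
\[
\lim_{\varepsilon \to 0} \frac{\varepsilon^{d-2}}{\log(1/\varepsilon)}\, \E\big[J_2(I_s^\varepsilon) J_2(I_t^\varepsilon)\big] = \rho^2 (s \wedge t),
\]
identifying $\rho$ as in \eqref{eq:rhodeflog}, and verifies the vanishing of the contractions $\|f_\varepsilon \otimes_1 f_\varepsilon\|$ of the symmetric kernels representing $J_2(I_T^\varepsilon)$. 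Joint asymptotic normality of the finite-dimensional vectors then follows from Peccati--Tudor, and the bilinear covariance structure identifies the Gaussian limit with $\rho W$.

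For tightness, I set $Z := \varepsilon^{d/2-1}(\log(1/\varepsilon))^{-1/2}(I_{T_2}^\varepsilon - I_{T_1}^\varepsilon - \E[I_{T_2}^\varepsilon - I_{T_1}^\varepsilon])$. Applying $Z = -\delta D L^{-1} Z$, Meyer's inequalities in the form \eqref{eq:ZpboundMeyer}, and Minkowski's inequality inside the double integral, I reduce $\|Z\|_{L^p(\Omega)}$ for $p > 2$ close to $2$ to controlling
\[
\frac{\varepsilon^{d/2-1}}{\sqrt{\log(1/\varepsilon)}}\int_{T_1}^{T_2}\!\!\int_0^t \|D^2 L^{-1} p_\varepsilon(B_t - B_s)\|_p \, ds\, dt.
\]
Using the representation of $L^{-1}$ through the Ornstein--Uhlenbeck semigroup, Mehler's formula, the self-similarity of the fractional Brownian motion, and direct Gaussian computations, this in turn reduces to the finiteness of a deterministic integral analogous to the one appearing in Lemma \ref{lem:inegraltightness}. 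The critical normalization $(\log(1/\varepsilon))^{-1/2}$ is designed precisely to absorb the logarithmic blow-up of that integral, yielding a bound of the form $C|T_2 - T_1|^{p/2}$, which by Billingsley's criterion gives tightness.

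The main obstacle will be executing this last Gaussian estimate in the critical regime. Unlike the case $H < 3/4$ treated in Theorem \ref{Teo:convergence}, the logarithm in $\varepsilon$ must now appear in the right place with the right constant so as to cancel exactly against $(\log(1/\varepsilon))^{-1/2}$ in the normalization, with essentially no slack. This will force a careful splitting of the Ornstein--Uhlenbeck time integral coming from Mehler's formula into short-time and long-time regimes, together with delicate bookkeeping of the dependence on $p$ through the Gaussian moments. The hypothesis $d \geq 3$ is essential here: at $d = 2$ one has $Hd = 3/2$, and the underlying integral diverges non-logarithmically, which is exactly why the corresponding case is left open in the remark preceding the theorem.
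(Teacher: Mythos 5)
Your overall architecture is sound, and your tightness half is essentially the paper's own proof: the chain $Z=-\delta DL^{-1}Z$, Meyer's inequality \eqref{eq:Meyer}, Mehler's formula and the Gaussian computations lead to \eqref{ineq:Ze2} with $H=\frac{3}{4}$, and the uniform-in-$\varepsilon$ finiteness after dividing by $\log(1/\varepsilon)$ is exactly Lemma \ref{lem:finiteFGintegrallog}. Two small corrections there. First, the logarithm is not produced by splitting the Ornstein--Uhlenbeck time integral into short- and long-time regimes: that integral is handled exactly as in the subcritical case, via the substitutions $\eta=1-e^{-2\theta}$, $\xi=1-e^{-2\beta}$. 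The logarithm comes from the separation variable $x$ on the region $\Sc_{3}$ of \eqref{eq:Scdef}, where the bound $\mu(x,u_{1},u_{2})^{2}\leq Cx^{-1}(u_{1}u_{2})^{2}$ turns $\int_{1}^{\varepsilon^{-2/3}T}x^{-1}\,dx$ into the $\log(1/\varepsilon)$ that your normalization absorbs. Second, the precise role of $d\geq3$ is that Lemma \ref{lem:finiteFGintegrallog} requires $p<d$, so only for $d\geq3$ can one choose $p>2$ as Billingsley's criterion demands; $d\geq3$ is also needed simply for $\rho$ in \eqref{eq:rhodeflog} to be finite.

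For the finite-dimensional distributions you take a genuinely different route at the crucial step. You and the paper agree on the skeleton: reduce to the second chaos (Lemma \ref{lem:chaos4tozero2}, which also exempts you from the tail condition (iv) of Theorem \ref{Fourthmoment}) and compute the covariance limit $\rho^{2}(s\wedge t)$ (Theorem \ref{teo:covchaoslog}, which itself requires the approximation $\widetilde{J}_{T}^{\varepsilon}$ of Lemma \ref{lem:finiteFGintegrallogaux}). But for Gaussianity you propose to verify directly that the normalized contractions $\Norm{h_{2,T}^{\varepsilon}\otimes_{1}h_{2,T}^{\varepsilon}}_{(\Hg^{d})^{\otimes2}}$ vanish, whereas the paper never computes a contraction: it discretizes the scale variable $u$ in $\widetilde{J}_{T}^{\varepsilon}$ by Riemann sums $R_{T,M}^{\varepsilon}$, proves the approximation is uniform over $\varepsilon\in(0,1/e)$ as $M\to\infty$ (Step I), imports the one-dimensional critical CLT of \cite[Equation~(1.4)]{DaNoNu} for each fixed scale $u(k)$, and glues the scales with Peccati--Tudor before letting $M\to\infty$ (with $\widetilde{\rho}_{M}\to\rho$). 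Your route is legitimate in principle --- for second-chaos variables with convergent covariances, joint normality is equivalent to the vanishing of the contractions --- and it would buy a self-contained argument with no discretization and no external citation. What it costs is that the contraction estimate is precisely where the critical-case difficulty is concentrated: at $H=\frac{3}{4}$ the normalized fourth cumulant decays only logarithmically, so the resulting multiple integral over the scale and separation variables has no polynomial slack, and carrying it out essentially amounts to reproving the Darses--Nourdin--Nualart result that the paper deliberately quotes instead. If you pursue it, expect an analysis of the same delicacy as Lemma \ref{lem:finiteFGintegrallogaux} and Theorem \ref{teo:covchaoslog} combined, rather than the routine verification your outline suggests.
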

\noindent \textbf{Remark}\\ 
We impose the stronger condition $d\geq3$ instead of $d\geq2$, since the choice $H=\frac{3}{4}$, $d=2$ gives $Hd=\frac{3}{2}$, and as mentioned before, it is not clear how to prove tightness for this case.\\

We briefly outline the proof of Theorem \ref{Teo:convergencelog}. The proof of the tightness property is analogous to the case $\frac{3}{2d}<H<\frac{3}{4}$.
On the other hand, the proof of the convergence of the finite dimensional distributions requires a new approach. First we show that, as in the case $H>\frac{3}{4}$, the chaos of order two determines the asymptotic behavior of $\{I_{T}^{\varepsilon}\}_{T\geq0}$. Then we describe the behavior of the second chaotic component of $I_{T}^{\varepsilon}$, which we denote by  $J_{2}(I_{T}^{\varepsilon})$, and is given by 
\begin{equation}\label{eq:introRiemmanprev}
 J_{2}(I_{T}^{\varepsilon})\\
  =-\frac{(2\pi)^{-\frac{d}{2}}  \varepsilon^{\frac 23-\frac d2}}{2 }\sum_{j=1}^{d}\int_{0}^{T}\int_{0}^{\varepsilon^{-\frac{2}{3}}(T-s)}
  \frac {u^{\frac 32} }{(1+ u^{\frac 32})^{\frac d2+1} }
 H_{2}\left(\frac{B_{s+\varepsilon^{\frac{2}{3}}u}^{(j)}-B_{s}^{(j)}}{\sqrt{\varepsilon}u^{\frac{3}{4}}}\right)duds,
\end{equation}
where $H_{2}$ denotes the Hermite polynomial of order 2. Then we show that we can replace the  domain of integration of $u$ by $[0,\infty)$, and this integral can be approximated 
by Riemann sums of the type
\begin{align}\label{eq:introRiemman}
- \frac 1 {2^M} \sum_{k=2}^{ M2^M} \frac { u(k)^{\frac 32}}{(1+u(k)^{\frac{2}{3}})^{\frac{d}{2}+1}} \int_{0}^{T}H_{2}\left(\frac{B_{s+\varepsilon^{2^M}
u(k)}^{(j)}-B_{s}^{(j)}}{\sqrt{\varepsilon}u(k)^{\frac{3}{4}}}\right)ds,
\end{align} 
where  $u(k)=\frac k {2^M}$, and $M$ is some fixed positive number. By \cite[Equation~(1.4)]{DaNoNu}, we have that, for $k$ fixed, the random variable
\begin{align*}
\xi_{k}^{\varepsilon}(T)
  &:= \frac{\varepsilon^{-\frac{1}{3}}}{\sqrt{\log(1/\varepsilon)}}\int_{0}^{T}H_{2}\left(\frac{B_{s+\varepsilon^{\frac{2}{3}}u(k)}^{(j)}-B_{s}^{(j)}}{\sqrt{\varepsilon}u(k)^{\frac{3}{4}}}\right)ds
\end{align*} 
converges in law to a Gaussian distribution as $\varepsilon \rightarrow 0$. Hence, after a suitable analysis of the covariances of the process 
$\{\xi_{k}^{\varepsilon}(T)\ |\ 2\leq k\leq M2^M,\ \text{ and }\ T\geq0\}$ and an application of the Peccati-Tudor criterion (see \cite{PeTu}), we obtain that the process \eqref{eq:introRiemman} multiplied by the factor $  \frac {(2\pi)^{-\frac d2} \varepsilon ^{-\frac 13 }}{ 2 \sqrt{\log (1/\varepsilon)}}$
converges to a constant multiple of a Brownian motion $\rho_{M}W$, for some $\rho_{M}>0$. The result then follows by proving that the approximations \eqref{eq:introRiemman} to the integrals in the right-hand side of \eqref{eq:introRiemmanprev} are uniform over $\varepsilon\in(0,1/e)$ as $M\rightarrow\infty$, and that $\rho_{M}\rightarrow\rho$ as $M\rightarrow\infty$. 

The paper is organized as follows. In Section 2 we present some preliminary results on the fractional Brownian motion and the chaotic decomposition of $I_{T}^{\varepsilon}$. In Section 3, we compute the asymptotic behavior of the variances of the chaotic components of $I_{T}^{\varepsilon}$ as $\varepsilon\rightarrow0$. The proofs of the main results are presented in Section 4. Finally, in Section 5 we prove some technical lemmas.
\section{Preliminaries and main results}\label{sec:chaos}
\subsection{Some elements of Malliavin calculus for the fractional Brownian motion}\label{subsec:chaos}
Throughout the paper, $B=\{(B_{t}^{(1)},\dots, B_{t}^{(d)})\}_{t\ge 0}$ will denote a $d$-dimensional fractional Brownian motion with Hurst parameter $H\in(0,1)$, defined on a probability space $(\Omega,\Fc,\Pb)$. That is, $B$ is a centered, $\R^{d}$-valued Gaussian process with covariance function 
\begin{align*}
\E\left[B_{t}^{(i)}B_{s}^{(j)}\right]
  &=\frac{\delta_{{i,j}}}{2}(t^{2H}+s^{2H}-|t-s|^{2H}).
\end{align*}
We will denote by $\Hg$ the Hilbert space obtained by taking the completion of the space of step functions on $[0,\infty)$, endowed with the inner product 
\begin{align*}
\Ip{\Indi{[a,b]},\Indi{[c,d]}}_{\Hg}
  &:=  \E\left[\left(B_{b}^{(1)}-B_{a}^{(1)}\right)\left(B_{d}^{(1)}-B_{c}^{(1)}\right)\right],~~ \text{ for }~~0\leq a\leq b,\text{ and }0\leq c\leq d. 
\end{align*}
For every $1\leq j\leq d$ fixed, the mapping $\Indi{[0,t]} \mapsto B_{t}^{(j)}$ can be extended to linear isometry between $\Hg$ and the Gaussian subspace of $L^{2}\left(\Omega\right)$ generated by the process $B^{(j)}$. We will denote this isometry by $B^{(j)}(f)$, for $f\in\Hg$. If $f\in \Hg^{d}$ is of the form $f=(f_{1},\dots, f_{d})$, with $f_{j}\in \Hg$, we set $B(f):= \sum_{j=1} ^d B^{(j)}(f_{j})$.  Then $f\mapsto B(f)$ is a linear isometry between $\Hg^{d}$ and  the Gaussian subspace of   $L^2\left(\Omega \right)$ generated by $B$.
 
For any integer $q\geq1$, we denote by $(\Hg^{d})^{\otimes q}$ and $(\Hg^{d})^{\odot q}$ the $q$th tensor product of $\Hg^{d}$, and the $q$th symmetric tensor product of $\Hg^{d}$, respectively. The $q$th Wiener chaos	of $L^{2}(\Omega)$, denoted by $\Hc_{q}$, is the closed subspace of $L^{2}(\Omega)$ generated by the variables 
\[
\left \{\prod_{j=1}^{d}H_{q_{j}}(B^{(j)}(f_{j}))\ |\ \sum_{j=1}^{d}q_{j}=q,\text{ and } f_{1},\dots, f_{d}\in\Hg,\Norm{f_{j}}_{\Hg}=1 \right\},
\]
 where $H_{q}$ is the $q$th Hermite polynomal, defined by 
\begin{align*}
H_{q}(x)
  &:=(-1)^{q}e^{\frac{x^{2}}{2}}\frac{\text{d}^{q}}{\text{d}x^{q}}e^{-\frac{x^{2}}{2}}.
\end{align*}
For $q\in\N$, with $q\geq1$, and $f\in \Hg^{d}$ of the form $f=(f_{1},\dots, f_{d})$, with $\Norm{f_{j}}_{\Hg}=1$, we can write 
\begin{align*}
f^{\otimes q}
  =\sum_{i_{1},\dots, i_{q}=1}^{d}f_{i_{1}}\otimes\cdots \otimes f_{i_{q}}.
\end{align*}
For such $f$, we define the mapping 
\begin{align*}
I_{q}(f^{\otimes q})
  &:=\sum_{i_{1},\dots, i_{q}=1}^{d}\prod_{j=1}^{d}H_{q_{j}(i_{1},\dots, i_{q})}(B^{(j)}(f_{j})),
\end{align*}
where $q_{j}(i_{1},\dots, i_{q})$ denotes the number of indices in $(i_{1},\dots, i_{q})$ equal to $j$. The range of $I_{q}$ is contained in $\Hc_{q}$. Furthermore, this mapping can be extended to a linear isometry between $\Hg^{\odot q}$ (equipped with the norm $\sqrt{q!}\Norm{\cdot}_{(\Hg^{d})^{\otimes q}}$) and $\Hc_{q}$ (equipped with the $L^{2}(\Omega)$-norm).

Denote by $\Gc$ the $\sigma$-algebra generated by $B$. It is well known that every square integrable random variable $\Gc$-measurable, has a chaos decomposition of the type 
\begin{align*}
F=\E\left[F\right]+\sum_{q=1}^{\infty}I_{q}(f_{q}),
\end{align*}
for some $f_{q}\in(\Hg^{d})^{\odot q}$. In what follows, we will denote by $J_{q}(F)$, for $q\geq1$, the projection of $F$ over the $q$th Wiener chaos $\Hc_{q}$, and by $J_{0}(F)$ the expectation of $F$. 

Let $\Sc$ denote the set  of all cylindrical random variables of the form
\begin{align*}
F= g(B(h_{1}),\dots, B(h_{n})),
\end{align*} 
where $g:\R^{n}\rightarrow\R$ is an infinitely differentiable function with compact support, and $h_{j}\in\Hg^{d}$. The Malliavin derivative of $F$ with respect to $B$, is the element of $L^{2}(\Omega;\Hg^d)$, defined by 
\begin{align*}
DF
  &=\sum_{i=1}^{n}\frac{\partial g}{\partial x_{i}}(B(h_{1}),\dots, B(h_{n}))h_{i}.
\end{align*}
By iteration, one can define the $r$th derivative $D^{r}$ for every $r\geq2$, which is an element of $L^{2}(\Omega;(\Hg^d)^{\otimes r})$.

For $p\geq1$ and $r\geq1$, the space $\D^{r,p}$ denotes the closure of $\Sc$ with respect to the norm $\Norm{\cdot}_{\D^{r,p}}$, defined by 
\begin{align*}
\Norm{F}_{\D^{r,p}}
  &:=\left(\E\left[\Abs{F}^{p}\right]+\sum_{i=1}^{r}\E\left[\Norm{D^{i}F}_{(\Hg^{d})^{\otimes i}}^{p}\right]\right)^{\frac{1}{p}}.
\end{align*}
The operator $D^{r}$ can be consistently extended to the space $\D^{r,p}$. We denote by $\delta$ the adjoint of the operator $D$, also called the divergence operator. A random element $u\in L^{2}(\Omega;\Hg^{d})$ belongs to the domain of $\delta$, denoted by $\mathrm{Dom} \, \delta $, if and only if satisfies
\begin{align*}
\Abs{\E\left[\Ip{DF,u}_{\Hg^{d}}\right]}
  &\leq C_{u}\E\left[F^{2}\right]^{\frac{1}{2}},\ \text{ for every } F\in\D^{1,2},
\end{align*}
where $C_{u}$ is  a constant only depending on $u$. If $u\in \mathrm{Dom} \,\delta$, then the random variable $\delta(u)$ is defined by the duality relationship
\begin{align*}
\E\left[F\delta(u)\right]=\E\left[\Ip{DF,u}_{\Hg^{d}}\right],
\end{align*}
which holds for every $F\in\D^{1,2}$. The operator $L$ is defined on the Wiener chaos by 
$$LF:=\sum_{q=1}^{\infty}-q J_{q}F,\ \text{ for } F\in L^{2}(\Omega),$$
and coincides with the infinitesimal generator of the Ornstein-Uhlenbeck semigroup $\{P_{\theta}\}_{\theta\geq0}$, which is defined by 
\begin{align*}
P_{\theta}
  &:=\sum_{q=0}^{\infty}e^{-q\theta}J_{q}.
\end{align*}
A random variable $F$ belongs to the domain of $L$ if and only if $F\in \D^{1,2}$, and $DF\in \mathrm{Dom} \, \delta$, in which case
\begin{align*}
\delta DF
  &=-LF.
\end{align*}
We also define the operator $L^{-1}$ as 
$$L^{-1}F=\sum_{q=1}^{\infty}-\frac{1}{q}J_{q}F,\ \text{ for } F\in L^{2}(\Omega).$$
Notice that $L^{-1}$ is a bounded operator and satisfies $LL^{-1}F=F-\E\left[F\right]$ for every $F\in L^{2}(\Omega)$, so that $L^{-1}$ acts as a pseudo-inverse of $L$. The operator $L^{-1}$ satisfies the following contraction property for every $F\in L^{2}(\Omega)$ with $\E\left[F\right]=0$, 
\begin{align*}
\E\left[\Norm{DL^{-1}F}_{\Hg^{d}}^2\right]
  &\leq\E\left[F^2\right].
\end{align*}
In addition, by Meyer's inequalities (see \cite[Proposition~1.5.8]{Nualart}), for every $p>1$, there exists a constant $c_{p}>0$ such that the following relation holds for every $F\in \D^{2,p}$, with $\E\left[F\right]=0$
\begin{align}\label{eq:Meyer}
\Norm{\delta(DL^{-1}F)}_{L^{p}(\Omega)}
  &\leq c_{p}(\Norm{D^2L^{-1}F}_{L^{p}(\Omega;(\Hg^d)^{\otimes 2})}+\Norm{\E\left[DL^{-1}F\right]}_{(\Hg)^d}).
\end{align}
Assume that $\widetilde{B}$ is an independent copy of $B$, and such that $B,\widetilde{B}$ are defined in the product space $(\Omega\times \widetilde{\Omega}, \Fc \otimes\widetilde{\Fc},\Pb\otimes\widetilde{\Pb})$. Given a random variable $F\in L^{2}(\Omega)$, measurable with respect to the $\sigma$-algebra generated by $B$, we can write $F=\Psi_{F}(B)$, where $\Psi_{F}$ is a measurable mapping from $\R^{\Hg^{d}}$ to $\R$, determined $\Pb$-a.s. Then, for every $\theta\geq0$ we have the Mehler formula 
\begin{align}\label{eq:Mehler}
P_{\theta}F
  &=\widetilde{\E}\left[\Psi_{F}(e^{-\theta}B+\sqrt{1-e^{-2\theta}}\widetilde{B})\right],
\end{align}
where $\widetilde{\E}$ denotes the expectation with respect to $\widetilde{\Pb}$. The operator $L^{-1}$ can be expressed in terms of $P_{\theta}$, as follows
\begin{align}\label{eq:Mehler2}
L^{-1}F
  &=\int_{0}^{\infty}P_{\theta}Fd\theta,~ \text{ for } F \text{ such that }\E\left[F\right]=0.
\end{align}

\subsection{Hermite process}\label{subsec:Hermiteprocess} When $H>\frac{1}{2}$, the inner product in the space $\Hg$ can be written, for every step functions $\varphi,\vartheta$ on $[0,\infty)$, as
\begin{align}\label{eq:IpHbig}
\Ip{\varphi,\vartheta}_{\Hg}
  &=H(2H-1)\int_{\R_{+}^2}\varphi(\xi)\vartheta(\nu)\Abs{\xi-\nu}^{2H-2}d\xi d\nu.
\end{align}
Following \cite{NoNuTu}, we introduce the Hermite process $\{X_{T}^{j}\}_{T\geq0}$ of order 2, associated to the $j$th component of $B$, $\{B_{t}^{(j)}\}_{t\geq0}$, and describe some of its properties. The family of kernels $\{\varphi_{j,T}^{\varepsilon}\ |\ T\geq0,\varepsilon\in(0,1)\}\subset(\Hg^{d})^{\otimes 2}$, defined, for every multi-index $\textbf{i}=(i_{1},i_{2})$, $1\leq i_{1},i_{2}\leq d$, by 
\begin{align}\label{eq:phidef}
\varphi_{j,T}^{\varepsilon}(\textbf{i},x_{1},x_{2})
  &:=\varepsilon^{-2}\int_{0}^{T}\delta_{j,i_{1}}\delta_{j,i_{2}}\Indi{[s,s+\varepsilon]}(x_{1})\Indi{[s,s+\varepsilon]}(x_{2})ds,
\end{align} 
satisfies the following relation for every $H>\frac{3}{4}$, and $T\geq0$
\begin{align}\label{eq:Ipphi}
\lim_{\varepsilon,\eta\rightarrow0}\Ip{\varphi_{j,T}^{\varepsilon},\varphi_{j,T}^{\eta}}_{(\Hg^{d})^{\otimes 2}}
  &=H^{2}(2H-1)^2\int_{[0,T]^{2}}\Abs{s_{1}-s_{2}}^{4H-4}ds_{1}ds_{2}=c_{H}T^{4H-2},
\end{align}
where $c_{H}:=\frac{H^2(2H-1)}{4H-3}$. This implies that $\varphi_{j,T}^{\varepsilon}$ converges, as $\varepsilon\rightarrow0$, to an element of $(\Hg^{d})^{\otimes 2}$, denoted by $\pi_{T}^{j}$. In particular, for every $K>0$, $\Norm{\varphi_{j,K}^{\varepsilon}}_{(\Hg^{d})^{\otimes 2}}$ is bounded by some constant $C_{K,H}$, only depending on $K$ and $H$. On the other hand, by \eqref{eq:IpHbig} and \eqref{eq:phidef},  we deduce that for every  $T\in[0,K]$, it holds $\Norm{\varphi_{j,T}^{\varepsilon}}_{(\Hg^{d})^{\otimes 2}}\leq \Norm{\varphi_{j,K}^{\varepsilon}}_{(\Hg^{d})^{\otimes 2}}$, and hence
\begin{align}\label{ineq:supIpphi}
\sup_{\substack{T_{1},T_{2}\in(0,K]\\\varepsilon,\eta\in(0,1)}}\Abs{\Ip{\varphi_{j,T_{1}}^{\varepsilon},\varphi_{j,T_{2}}^{\eta}}_{(\Hg^{d})^{\otimes 2}}}
  &\leq \sup_{\substack{T_{1},T_{2}\in(0,K]\\\varepsilon,\eta\in(0,1)}}\Norm{\varphi_{j,T_{1}}^{\varepsilon}}_{(\Hg^{d})^{\otimes 2}}\Norm{\varphi_{j,T_{2}}^{\eta}}_{(\Hg^{d})^{\otimes 2}}\nonumber\\
  &\leq\sup_{\varepsilon\in(0,1)}\Norm{\varphi_{j,K}^{\varepsilon}}_{(\Hg^{d})^{\otimes 2}}^{2}\leq C_{K,H}.
\end{align}
The element $\pi_{T}^{j}$, can be characterized as follows. For any vector of step functions with compact support $f_{i}=(f_{i}^{(1)},\dots, f_{i}^{(d)})\in\Hg^{d}$, $i=1,2$, we have
\begin{align*}
\Ip{\pi_{t}^{j},f_{1}\otimes f_{2}}_{(\Hg^{d})^{\otimes 2}}
  &=\lim_{\varepsilon\rightarrow0}\Ip{\varphi_{j,t}^{\varepsilon},f_{1}\otimes f_{2}}_{(\Hg^{d})^{\otimes 2}}\\
	&=\lim_{\varepsilon\rightarrow0}\varepsilon^{-2}H^{2}(2H-1)^{2}\\
	&\times \int_{0}^{T}\prod_{i=1,2}\int_s^{s+\varepsilon}\int_{0}^{T}\Abs{\xi-\eta}^{2H-2}f_{i}^{(j)}(\eta)d\eta d\xi ds
\end{align*}
and hence 
\begin{align}\label{eq:picharact}
\Ip{\pi_{t}^{j},f_{1}\otimes f_{2}}_{(\Hg^{d})^{\otimes 2}}
	&=H^{2}(2H-1)^{2}\int_{0}^{T}\prod_{i=1,2}\int_{0}^{T}\Abs{s-\eta}^{2H-2}f_{i}^{(j)}(\eta)d\eta ds.
\end{align}
We define the second order Hermite process $\{X_{T}^{j}\}_{T\geq0}$, with respect to $\{B_{t}^{(j)}\}_{t\geq0}$, as $X_{T}^{j}:=I_{2}(\pi_{T}^{j})$.
\subsection{A multivariate central limit theorem}\label{section:4mthm}
In the  seminal paper \cite{NuPe}, Nualart and Peccati established a central limit theorem for sequences of multiple stochastic integrals of a fixed order. In this context, assuming that the variances converge, convergence in distribution to  a centered  Gaussian law is actually equivalent to convergence of just the fourth moment. Shortly afterwards, in \cite{PeTu}, Peccati and Tudor gave a multidimensional version of this characterization. More recent developments on these type of results have been addressed by using Stein's method and Malliavin techniques (see the monograph by Nourdin and Peccati \cite{NoPe} and the references therein). In the sequel, we will use the following  multivariate central limit theorem obtained by Peccati and Tudor in \cite{PeTu} (see also Theorems 6.2.3 and 6.3.1 in \cite{NoPe}).
\begin{Teorema}\label{Fourthmoment}
For $r\in\N$ fixed, consider a sequence $\{F_{n}\}_ {n\geq1 }$ of  random vectors of the form $F_{n}=(F_{n}^{(1)},\dots, F_{n}^{(r)})$. Suppose that for $i=1,\dots, r$ and $n\in\N$, the random variables $F_{n}^{(i)}$ belong to $L^{2}(\Omega)$, and have chaos decomposition 
\begin{align*}
F_{n}^{(i)}
  &=\sum_{q=1}^{\infty}I_{q}(f_{q,i,n}),
\end{align*} 
for some $f_{q,i,n}\in(\Hg^{d})^{\otimes q}$. Suppose, in addition, that for every $q\geq1$, there is a real symmetric non negative definite matrix $C_{q}=\{C_{q}^{i,j}\ |\ 1\leq i,j\leq r\}$, such that the following conditions hold:
\begin{enumerate}
\item[(i)] For every fixed $q\geq1$, and $1\leq i,j\leq r$, we have $q!\Ip{f_{q,i,n},f_{q,j,n}}_{(\Hg^{d})^{\otimes q}}\rightarrow C_{q}^{i,j}$ as $n\rightarrow\infty$.
\item[(ii)] There exists a real symmetric nonnegative definite matrix $C=\{C^{i,j}\ |\ 1\leq i,j\leq r\}$, such that
$C^{i,j}=\lim_{Q\rightarrow\infty}\sum_{q=1}^{Q}C_{q}^{i,j}$.
\item[(iii)] For all $q\geq1$ and $i=1,\dots, r$, the sequence $\{I_{q}(f_{q,i,n})\}_{n\geq1}$ converges in law to a centered Gaussian distribution as $n\rightarrow\infty$.
\item[(iv)] $\lim_{Q\rightarrow\infty}\sup_{n\geq1}\sum_{q=Q}^{\infty}q!\Norm{f_{q,i,n}}_{(\Hg^{d})^{\otimes q}}^{2}=0$, for all $i=1,\dots, r$.
\end{enumerate}
Then, $F_{n}$ converges in law as $n\rightarrow\infty$, to a centered Gaussian vector with covariance matrix $C$.
\end{Teorema}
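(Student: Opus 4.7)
The plan is a truncation argument that reduces the infinite chaos expansion to finite sums, with each finite sum handled by the single-chaos multivariate CLT of Peccati--Tudor (the $r=1$ case being the Nualart--Peccati fourth moment theorem). For each $Q\geq 1$, I introduce the truncated vectors $F_n^Q := (F_n^{(1),Q}, \ldots, F_n^{(r),Q})$ with $F_n^{(i),Q} := \sum_{q=1}^Q I_q(f_{q,i,n})$. Then I would establish three ingredients: (a) for each fixed $Q$, $F_n^Q$ converges in law as $n\to\infty$ to a centered Gaussian $N_Q$ with covariance $C^Q := \sum_{q=1}^Q C_q$; (b) $N_Q$ converges in law to a centered Gaussian with covariance $C$ as $Q\to\infty$, which is immediate from hypothesis (ii); and (c) the tail $F_n-F_n^Q$ is $L^2$-negligible uniformly in $n$ by hypothesis (iv). A standard triangular argument comparing characteristic functions then yields the desired convergence of $F_n$.

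For (a), the key observation is that distinct Wiener chaoses are orthogonal in $L^2(\Omega)$, so any Gaussian limit of the enlarged vector $(I_q(f_{q,i,n}))_{1\leq q\leq Q,\,1\leq i\leq r}$ must have block-diagonal covariance across the $q$-index, making the different chaos levels asymptotically independent. It therefore suffices to prove, for each fixed $q$, joint convergence in law of $(I_q(f_{q,i,n}))_{1\leq i\leq r}$ to a centered Gaussian vector with covariance $C_q$. This is the fixed-chaos multivariate CLT: by hypothesis (iii), each individual component converges to a centered Gaussian, which by the fourth-moment theorem is equivalent to the vanishing of all contractions $\Norm{f_{q,i,n}\otimes_p f_{q,i,n}}_{(\Hg^d)^{\otimes(2q-2p)}}\to 0$ for every $1\leq p\leq q-1$. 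Combined with the covariance convergence from (i), a Cauchy--Schwarz bound of the form $\Norm{f_{q,i,n}\otimes_p f_{q,j,n}}^2\leq \Norm{f_{q,i,n}\otimes_p f_{q,i,n}}\cdot\Norm{f_{q,j,n}\otimes_p f_{q,j,n}}$ forces the mixed contractions to vanish as well, which by the Malliavin--Stein total-variation bound for vectors of multiple integrals implies joint Gaussian convergence with covariance $C_q$.

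For (c), the isometry property of multiple stochastic integrals gives
\begin{align*}
\E\left[(F_n^{(i)}-F_n^{(i),Q})^2\right]
  &=\sum_{q=Q+1}^\infty q!\Norm{f_{q,i,n}}_{(\Hg^d)^{\otimes q}}^2,
\end{align*}
which tends to $0$ uniformly in $n$ as $Q\to\infty$ by hypothesis (iv). To combine the three ingredients, for any continuous bounded $\phi\colon\R^r\to\R$ and any $\eta>0$ I would choose $Q$ so that the $L^2$-tail is smaller than $\eta$ uniformly in $n$, then let $n\to\infty$ using (a), and finally let $Q\to\infty$ using (b); the triangle inequality together with the modulus of continuity of $\phi$ (or a Lipschitz approximation thereof) closes the argument.

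The main obstacle is the fixed-chaos multivariate CLT invoked in step (a). Its delicacy lies in deducing joint Gaussian convergence from only the univariate Gaussian convergence of each marginal together with the convergence of the pairwise covariances---no a priori joint fourth-moment hypothesis is imposed. This is the core contribution of \cite{PeTu} and rests on controlling the total-variation (or smooth-Wasserstein) distance between $F_n^Q$ and $N_Q$ by sums of norms of contractions, which in turn are bounded by the univariate fourth cumulants of the components via the Cauchy--Schwarz step above. Once this fixed-chaos result is granted, the surrounding truncation argument is routine.
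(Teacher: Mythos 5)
This statement is not proved in the paper at all: it is quoted as the Peccati--Tudor multivariate central limit theorem, with pointers to \cite{PeTu} and to Theorems 6.2.3 and 6.3.1 of \cite{NoPe}, so there is no internal proof to compare against. Your sketch reconstructs, in substance, the standard proof of \cite[Theorem~6.3.1]{NoPe}: truncation at chaos order $Q$, a fixed-order multivariate fourth-moment theorem for the truncated vectors, uniform-in-$n$ tail control via the isometry and hypothesis (iv), and a three-$\varepsilon$ comparison of test functions or characteristic functions. Ingredients (b) and (c) and the final triangulation are correct as written.

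One step in (a) needs repair. You reduce to joint convergence of each fixed-$q$ block $(I_q(f_{q,i,n}))_{1\le i\le r}$ and then assert that orthogonality of the chaoses makes the blocks asymptotically independent. Orthogonality only gives that the covariance between components of different orders is exactly zero for every $n$; it does not upgrade blockwise convergence in law to joint convergence of the concatenated vector, and asymptotic independence of the blocks is precisely part of what must be proved. The correct move, which is the one taken in \cite{PeTu} and \cite[Theorem~6.2.3]{NoPe}, is to apply the fixed-order multivariate theorem directly to the enlarged mixed-order vector $(I_q(f_{q,i,n}))_{1\le q\le Q,\,1\le i\le r}$: its full covariance matrix converges (within each order by (i), across orders it vanishes identically by orthogonality), each component is marginally asymptotically Gaussian by (iii), and the theorem then delivers joint Gaussian convergence with the block-diagonal covariance $\sum_{q=1}^{Q}C_q$. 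This costs nothing, since the contraction and Malliavin--Stein machinery you describe applies verbatim to vectors whose components lie in different chaoses. A small technical note on your Cauchy--Schwarz step: the exact identity is $\Norm{f\otimes_p g}^2=\Ip{f\otimes_{q-p}f,\,g\otimes_{q-p}g}$, so the self-contractions appearing on the right are of order $q-p$, not $p$; this is harmless here because the univariate fourth-moment theorem (whose variance-convergence hypothesis is supplied by (i)) makes all self-contractions of orders $1,\dots,q-1$ vanish. With these substitutions your argument is complete and coincides with the cited proof.
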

\subsection{Chaos decomposition for the self-intersection local time}
In this section we describe the chaos decomposition of the variable $I_{T}^{\varepsilon}$ defined by \eqref{eq:I}. Let $\varepsilon\in(0,1)$, and $T\geq0$ be fixed. Define the set
\begin{align*}
\Rc:=\{(s,t)\in\R_{+}^{2}\ |\ s\leq t\leq 1\}.
\end{align*}
For every $\gamma>0$, we will denote by $\gamma \Rc$ the set $\gamma\Rc:=\{\gamma v\ |\ v\in\Rc\}$. First we write
\begin{align}\label{eq:I1}
I_{T}^{\varepsilon}= \int_{\R_{+}^2}\Indi{T\Rc}(s,t)p_{\varepsilon}(B_{t}-B_{s}) dsdt.
\end{align}
We can determine the chaos decomposition of the random variable $p_{\varepsilon}(B_{t}-B_{s})$ appearing in \eqref{eq:I1} as follows. Given a multi-index $\textbf{i}_{n}=(i_{1},\dots,i_{n})$, $n\in\N$, $1\leq i_{j}\leq d$, we set 
\begin{align*}
\alpha(\textbf{i}_{n})
  &:=\E\left[X_{i_{1}}\cdots X_{i_{n}}\right],
\end{align*}
where the $X_{i}$ are independent standard Gaussian random variables. Notice that 
\begin{align}\label{eq:alphaindex}
\alpha(\textbf{i}_{2q})
  &=\frac{(2q_{1})!\cdots(2q_{d})!}{(q_{1})!\cdots(q_{d})!2^{q}},
\end{align}
 if $n=2q$ is even and for each $k=1,\dots,d$, the number of components of $\textbf{i}_{2q}$ equal to $k$, denoted by $2q_{k}$, is also even, and $\alpha(\textbf{i}_{n})=0$ otherwise. Proceeding as in \cite[Lemma~7]{HuNu}, we can prove that 
\begin{align}\label{eq:chaos:1}
p_{\varepsilon}(B_{t}-B_{s})
  &= \E\left[p_{\varepsilon}(B_{t}-B_{s})\right] +\sum_{q=1}^{\infty}I_{2q}\left(f_{2q,s,t}^{\varepsilon}\right),
\end{align}
where $f_{2q,s,t}^{\varepsilon}$ is the element of $(\Hg^{d})^{\otimes 2q}$, given by
\begin{align}\label{eq:kerneldef}
f_{2q,s,t}^{\varepsilon}(\textbf{i}_{2q},x_{1},\dots,x_{2q})
 &:= (-1)^{q}\frac{(2\pi)^{-\frac{d}{2}}\alpha(\textbf{i}_{2q})}{(2q)!}(\varepsilon+(t-s)^{2H})^{-\frac{d}{2}-q}\prod_{j=1}^{2q}\Indi{[s,t]}(x_{j}),
\end{align}
and 
\begin{align}\label{eq:Epinc}
\E\left[p_{\varepsilon}(B_{t}-B_{s})\right]
  &=(2\pi)^{-\frac{d}{2}}(\varepsilon+(t-s)^{2H})^{-\frac{d}{2}}.
\end{align}
By \eqref{eq:I1}, \eqref{eq:chaos:1} and \eqref{eq:Epinc}, it follows that the random variable $I_{T}^{\varepsilon}$ has 	the chaos decomposition
\begin{align}\label{eq:chaos2}
I_{T}^{\varepsilon}
  &=\E\left[I_{T}^{\varepsilon}\right]+\sum_{q=1}^{\infty}I_{2q}(h_{2q,T}^{\varepsilon}),
\end{align}
where 
\begin{align}\label{eq:kerneldef2}
h_{2q,T}^{\varepsilon}(\textbf{i}_{2q},x_{1},\dots,x_{2q})
 &:= \int_{\R_{+}^2}\Indi{T\Rc}(s,t)f_{2q,s,t}^{\varepsilon}(\textbf{i}_{2q},x_{1},\dots,x_{2q})dsdt,
\end{align}
and 
\begin{align}\label{eq:EI}
\E\left[I_{T}^{\varepsilon}\right]
  &=(2\pi)^{-\frac{d}{2}}\int_{\R_{+}^2}\Indi{T\Rc}(s,t)(\varepsilon+(t-s)^{2H})^{-\frac{d}{2}}dsdt.
\end{align}
In Section 3, we will describe the behavior as $\varepsilon\rightarrow0$ of the covariance function of the processes $\{I_{T}^{\varepsilon}\}_{T\geq0}$ and $\{I_{2q}(h_{2q,T}^{\varepsilon})\}_{T\geq0}$. In order to address this problem, we will first introduce some notation that will help us to describe the covariance function of the variables $p_{\varepsilon}(B_{t}-B_{s})$ and its chaotic components, which ultimately will lead to an expresion for the covariance function of $I_{T}^{\varepsilon}$.

First we describe the inner product $\Ip{f_{2q,s_{1},t_{1}}^{\varepsilon},f_{2q,s_{2},t_{2}}^{\varepsilon}}_{(\Hg^{d})^{\otimes2q}}$. From \eqref{eq:kerneldef}, we can prove that for every $0\leq s_{1}\leq t_{1}$ and $0\leq s_{2}\leq t_{2}$, 
\begin{align}\label{eq:Ipfexpanded}
\Ip{f_{2q,s_{1},t_{1}}^{\varepsilon},f_{2q,s_{2},t_{2}}^{\varepsilon}}_{(\Hg^{d})^{\otimes2q}}
  &=\sum_{q_{1}+\cdots+q_{d}=q}(2q_{1},\dots, 2q_{d})!\frac{(2\pi)^{-d}\alpha(\textbf{i}_{2q})^{2}}{((2q)!)^2}(\varepsilon+(t_{1}-s_{1})^{2H})^{-\frac{d}{2}-q}\nonumber\\
	&~~\times(\varepsilon+(t_{2}-s_{2})^{2H})^{-\frac{d}{2}-q}\Ip{\Indi{[s_{1},t_{1}]}^{\otimes2q},\Indi{[s_{2},t_{2}]}^{\otimes2q}}_{\Hg^{\otimes2q}},
\end{align}
where $(2q_{1},\dots, 2q_{d})!$ denotes the multinomial coefficient $(2q_{1},\dots, 2q_{d})!=\frac{(2q)!}{(2q_{1})!\cdots(2q_{d})!}$. To compute the term $\Ip{\Indi{[s_{1},t_{1}]}^{\otimes 2q},\Indi{[s_{2},t_{2}]}^{\otimes 2q}}_{\Hg^{\otimes 2q}}$ appearing in the previous expression, we will introduce the following notation. For every $x,u_{1},u_{2}>0$, define
\begin{align}\label{eq:mudef}
\mu(x,u_{1},u_{2})
  &:= \E\left[B_{u_{1}}^{(1)}\left(B_{x+u_{2}}^{(1)}-B_{x}^{(1)}\right)\right].
\end{align}
Define as well $\mu(x,u_{1},u_{2})$, for $x<0$, by $\mu(x,u_{1},u_{2}):=\mu(-x,u_{2},u_{1})$. Using the property of stationary increments of $B$, we can check that for every $s_{1},s_{2},t_{1},t_{2}\geq0$, such that $s_{1}\leq t_{1}$ and $s_{2}\leq  t_{2}$,	it holds 
\begin{align}\label{eq:covariancemu}
\E\left[\left(B_{t_{1}}^{(1)}-B_{s_{1}}^{(1)}\right)\left(B_{t_{2}}^{(1)}-B_{s_{2}}^{(1)}\right)\right]
  &=\mu(s_{2}-s_{1},t_{1}-s_{1},t_{2}-s_{2}).
\end{align}
 As a consequence, by \eqref{eq:alphaindex} and \eqref{eq:Ipfexpanded},
\begin{align*}
\Ip{f_{2q,s_{1},t_{1}}^{\varepsilon},f_{2q,s_{2},t_{2}}^{\varepsilon}}_{(\Hg^{d})^{\otimes2q}}
  &=\frac{\alpha_{q}}{(2\pi)^{d}(2q)!2^{2q}}(\varepsilon+(t_{1}-s_{1})^{2H})^{-\frac{d}{2}-q}(\varepsilon+(t_{2}-s_{2})^{2H})^{-\frac{d}{2}-q}\\
	&~~\times\mu(s_{2}-s_{1},t_{1}-s_{1},t_{2}-s_{2})^{2q},
\end{align*}
where the constant $\alpha_{q}$ is defined by
\begin{align}\label{eq:alphaqdef}
\alpha_{q}
  &:=\sum_{q_{1}+\cdots+q_{d}=q}\frac{(2q_{1})!\cdots(2q_{d})!}{(q_{1}!)^{2}\cdots(q_{d}!)^2}.
\end{align}
From here we can conclude that  
\begin{align}\label{eq:innerproductG}
\Ip{f_{2q,s_{1},t_{1}}^{\varepsilon},f_{2q,s_{2},t_{2}}^{\varepsilon}}_{(\Hg^{d})^{\otimes2q}}
  &=\frac{\alpha_{q}}{(2\pi)^{d}(2q)!2^{2q}}G_{\varepsilon,s_{2}-s_{1}}^{(q)}(t_{1}-s_{1},t_{2}-s_{2}),
\end{align}
where $G_{\varepsilon,x}^{(q)}(u_{1},u_{2})$ is defined by
\begin{align}\label{eq:Gdef}
G_{\varepsilon,x}^{(q)}(u_{1},u_{2})
  &:=\left(\varepsilon+u_{1}^{2H}\right)^{-\frac{d}{2}-q}\left(\varepsilon+u_{2}^{2H}\right)^{-\frac{d}{2}-q}\mu(x,u_{1},u_{2})^{2q}.
\end{align}

Now we describe the covariance $\mathrm{Cov}\left[p_{\varepsilon}\left(B_{t_{1}}-B_{s_{1}}\right),p_{\varepsilon}\left(B_{t_{2}}-B_{s_{2}}\right)\right]$.  Using the chaos expansion
\eqref{eq:chaos:1} and \eqref{eq:innerproductG}, we obtain
\begin{equation}  \label{eq1}
\mathrm{Cov}\left[p_{\varepsilon}\left(B_{t_{1}}-B_{s_{1}}\right),p_{\varepsilon}\left(B_{t_{2}}-B_{s_{2}}\right)\right] =
\sum_{q=1} ^\infty \frac{\alpha_{q}}{(2\pi)^{d}2^{2q}}G_{\varepsilon,s_{2}-s_{1}}^{(q)}(t_{1}-s_{1},t_{2}-s_{2}).
\end{equation}
On the   other hand, using once more the property of stationary increments of $B$, we can prove that for every $s_{1}\leq t_{1}$, and $s_{2}\leq t_{2}$,
\begin{align}\label{eq:CovpF}
\mathrm{Cov}\left[p_{\varepsilon}\left(B_{t_{1}}-B_{s_{1}}\right),p_{\varepsilon}\left(B_{t_{2}}-B_{s_{2}}\right)\right]
  &= F_{\varepsilon,s_{2}-s_{1}}(t_{1}-s_{1},t_{2}-s_{2}),
\end{align}
where the function $F_{\varepsilon,x}(u_{1},u_{2})$, for $u_{1},u_{2}>0$, is defined by  
\begin{align}\label{eq:Fdef}
F_{\varepsilon,x}(u_{1},u_{2})
  &:= \mathrm{Cov}\left[p_{\varepsilon}(B_{u_{1}}),p_{\varepsilon}\left(B_{x+u_{2}}-B_{x}\right)\right],
\end{align}
in the case $x>0$, and by $F_{\varepsilon,x}(u_{1},u_{2}):=F_{\varepsilon,-x}(u_{2},u_{1})$ in the case $x<0$. Proceeding as in \cite{HuNu}, equations (13)-(14), we can prove that for every $u_{1},u_{2}\geq0$, $x\in\R$, 
\begin{align}\label{eq:Fidentitybasic}
F_{\varepsilon,x}(u_{1},u_{2})
  &= (2\pi)^{-d}\bigg{[}\left((\varepsilon+u_{1}^{2H})(\varepsilon+u_{2}^{2H})-\mu(x,u_{1},u_{2})^{2}\right)^{-\frac{d}{2}}\nonumber\\
	&~~-(\varepsilon+u_{1}^{2H})^{-\frac{d}{2}}(\varepsilon+u_{2}^{2H})^{-\frac{d}{2}}\bigg{]},
\end{align}
and consequently,
\begin{align}\label{eq:Fidentity}
F_{\varepsilon,x}(u_{1},u_{2})
  &= (2\pi)^{-d}(\varepsilon+u_{1}^{2H})^{-\frac{d}{2}}(\varepsilon+u_{2}^{2H})^{-\frac{d}{2}}\nonumber\\
	&~~\times\left(\left(1-\frac{\mu(x,u_{1},u_{2})^{2}}{(\varepsilon+u_{1}^{2H})(\varepsilon+u_{2}^{2H})}\right)^{-\frac{d}{2}}-1\right).
\end{align}
From \eqref{eq1} and  \eqref{eq:CovpF} it follows that the functions $G_{\varepsilon, x}^{(q)}(u_{1},u_{2})$ and $F_{\varepsilon,x}(u_{1},u_{2})$ appearing in \eqref{eq:innerproductG} and \eqref{eq:Fidentity} are related in the following manner:
\begin{align}\label{eq:Fidentitychaos}
F_{\varepsilon,x}(u_{1},u_{2})
  &= \sum_{q=1}^{\infty}\beta_{q}G_{\varepsilon,x}^{(q)}(u_{1},u_{2}),
\end{align}
where $\beta_{q}$ is defined by 
\begin{align}\label{eq:betaq}
\beta_{q}
  &:= \frac{\alpha_{q}}{(2\pi)^{d}2^{2q}}.
\end{align}
The  functions $G_{1,x}^{(q)}(u_{1},u_{2})$ and $F_{1,x}(u_{1},u_{2})$ satisfy the  following useful integrability condition, which was proved in \cite[Lemma~13]{HuNu}, .
\begin{Lema}\label{lem:finiteFGintegral}
Let $\frac{3}{2d}<H<\frac{3}{4}$, and $q\in\N$, $q\geq1$ be fixed. Define  $G_{1,x}^{(q)}(u_{1},u_{2})$ by \eqref{eq:Gdef} and $\beta_{q}$ by \eqref{eq:betaq}. Then, 
\begin{align*}
\beta_{q}\int_{\R_{+}^{3}}G_{1,x}^{(q)}(u_{1},u_{2})\text{d}x\text{d}u_{1}\text{d}u_{2}\leq 
\int_{\R_{+}^{3}}F_{1,x}(u_{1},u_{2})\text{d}x\text{d}u_{1}\text{d}u_{2}<\infty.
\end{align*}
\end{Lema}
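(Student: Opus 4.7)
The first inequality is immediate. For every $q\geq 1$ and $(x,u_1,u_2)\in\R_{+}^{3}$, the function $G_{1,x}^{(q)}(u_1,u_2)$ defined in $\eqref{eq:Gdef}$ is pointwise nonnegative: the prefactors $(1+u_i^{2H})^{-d/2-q}$ are strictly positive and $\mu(x,u_1,u_2)^{2q}$ is an even power. Since each $\beta_q$ is positive, identity $\eqref{eq:Fidentitychaos}$ expresses $F_{1,x}(u_1,u_2)$ as a sum of nonnegative terms, so for any fixed $q$ one has $\beta_q G_{1,x}^{(q)}(u_1,u_2)\leq F_{1,x}(u_1,u_2)$. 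Integrating over $\R_{+}^{3}$ gives the claimed pointwise bound between integrals.

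For the finiteness of $\int_{\R_{+}^{3}}F_{1,x}(u_1,u_2)\,dx\,du_1\,du_2$, the plan is to use the closed form $\eqref{eq:Fidentity}$ and split $\R_{+}^{3}$ according to the size of the ratio $r(x,u_1,u_2) := \mu(x,u_1,u_2)^{2}/[(1+u_1^{2H})(1+u_2^{2H})]$. Cauchy--Schwarz applied to $\mu(x,u_1,u_2)=\E[B_{u_1}^{(1)}(B_{x+u_2}^{(1)}-B_x^{(1)})]$ gives $\mu^{2}\leq u_1^{2H}u_2^{2H}$, so $r<1$ everywhere and moreover
\[
(1+u_1^{2H})(1+u_2^{2H})-\mu(x,u_1,u_2)^{2}\geq 1+u_1^{2H}+u_2^{2H}.
\]

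On $\{r\leq 1/2\}$, the elementary inequality $(1-r)^{-d/2}-1\leq C_{d}\,r$ together with $\eqref{eq:Fidentity}$ yields
\[
F_{1,x}(u_1,u_2)\leq C\,\frac{\mu(x,u_1,u_2)^{2}}{(1+u_1^{2H})^{d/2+1}(1+u_2^{2H})^{d/2+1}}.
\]
The integrability of the right-hand side over $\R_{+}^{3}$ is a dimension count: for large $|x|$ a Taylor expansion of $\mu$ yields the asymptotic $|\mu(x,u_1,u_2)|\lesssim u_1 u_2 (1+|x|)^{2H-2}$, which is square-integrable in $x$ precisely when $H<3/4$; the large-$u_i$ behavior is damped by the $(1+u_i^{2H})^{-d/2-1}$ factors combined with $\mu^{2}\leq u_1^{2H}u_2^{2H}$; and the small-scale behavior is controlled exactly by the hypothesis $H>3/(2d)$.

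On $\{r>1/2\}$, use instead the form $\eqref{eq:Fidentitybasic}$; the Cauchy--Schwarz lower bound displayed above yields the uniform estimate $F_{1,x}(u_1,u_2)\leq (2\pi)^{-d}(1+u_1^{2H}+u_2^{2H})^{-d/2}$. On this region the constraint $\mu(x,u_1,u_2)^{2}>\tfrac12(1+u_1^{2H})(1+u_2^{2H})\geq\tfrac12$ combined with the large-$|x|$ asymptotic of $\mu$ confines $|x|$ to a set of size $\lesssim (u_1 u_2)^{1/(2-2H)}$, reducing matters to a two-dimensional integral in $(u_1,u_2)$ that is convergent in the same range of $H$. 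The main obstacle is this region-by-region bookkeeping: the three potentially divergent regimes (near the origin, large $|x|$, and large $u_i$) must be treated separately, and the hypotheses $H>3/(2d)$ and $H<3/4$ enter as the sharp integrability thresholds at the origin and at infinity respectively — this is essentially the content of \cite[Lemma~13]{HuNu}.
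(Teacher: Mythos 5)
Your logical skeleton is the same as the paper's, whose entire proof consists of two sentences: the first inequality holds because, by \eqref{eq:Fidentitychaos}, $F_{1,x}(u_{1},u_{2})$ is a series of nonnegative terms, one of which is $\beta_{q}G_{1,x}^{(q)}(u_{1},u_{2})$; and the finiteness of $\int_{\R_{+}^{3}}F_{1,x}\,dx\,du_{1}\,du_{2}$ is quoted from \cite[Lemma~13]{HuNu}. Your first paragraph reproduces the first sentence, and your closing attribution to \cite[Lemma~13]{HuNu} reproduces the second, so to that extent the proposal is correct and matches the paper.

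The heuristic sketch in between, however, should not be mistaken for a proof, because its key estimate is false on part of the admissible parameter range. You claim $|\mu(x,u_{1},u_{2})|\lesssim u_{1}u_{2}(1+|x|)^{2H-2}$ for large $|x|$, implicitly uniformly in $(u_{1},u_{2})$. But correlations of fBm increments decay in the \emph{gap} between the two intervals, not in the distance $x$ between their left endpoints. Writing $u_{1}=a$, $u_{2}=c$ and $b=x-u_{1}$ for the gap, one has
\begin{align*}
\mu(a+b,a,c)=\tfrac12\bigl((a+b+c)^{2H}+b^{2H}-(b+c)^{2H}-(a+b)^{2H}\bigr),
\end{align*}
so taking $b=c=1$ and $a\to\infty$ with $H<\tfrac12$ gives $\mu\to\tfrac12\bigl(1-2^{2H}\bigr)\neq0$, while your bound $u_{1}u_{2}(1+|x|)^{2H-2}\sim a^{2H-1}\to0$. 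The case $H<\tfrac12$ cannot be excluded: for $d\geq4$ the hypothesis $\tfrac{3}{2d}<H<\tfrac34$ allows it. The same defect invalidates the confinement $|x|\lesssim(u_{1}u_{2})^{1/(2-2H)}$ that you use on the region $\{r>1/2\}$. The correct uniform estimate is $|\mu(a+b,a,c)|\le H|2H-1|\,ac\,b^{2H-2}$ (this is inequality \eqref{eq:inequR32} of Lemma \ref{ineq:techR}), and the convergence analysis must then be carried out in the variables $(a,b,c)$, region by region over $\Sc_{1},\Sc_{2},\Sc_{3}$ and with separate cases for $H<\tfrac12$ and $H\geq\tfrac12$ --- precisely the bookkeeping performed in \cite[Lemma~13]{HuNu} and mirrored in the paper's own Lemma \ref{lem:inegraltightness}. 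Since you ultimately delegate that bookkeeping to the citation, your proof stands, but only through the citation; the sketch itself would not survive as a standalone argument.
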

\begin{proof}
By \eqref{eq:Fidentitychaos}, it follows that $\beta_{q}G_{1,x}^{(q)}(u_{1},u_{2})\leq F_{1,x}(u_{1},u_{2})$. The integrability of the function $F_{1,x}(u_{1},u_{2})$ over $x,u_{1},u_{2}\geq0$, written as in \eqref{eq:Fidentitybasic}, is proved in \cite[Lemma~13]{HuNu} (see equation (40) for notation reference). 
\end{proof}
With the notation previously introduced, we can compute the covariance functions of the increments of the processes $\{I_{T}^{\varepsilon}\}_{T\geq0}$ and 
$\{I_{2q}(h_{2q,T}^{\varepsilon})\}_{T\geq0}$ as follows. Define the set $\mathcal{K}_{T_{1},T_{2}}$ by 
\begin{align}\label{eq:Sdef}
\mathcal{K}_{T_{1},T_{2}}
  &:=\{(s,t)\in\R_{+}^2\ |\ s\leq t,~\text{ and }~ T_{1}\leq t\leq  T_{2}\}.
\end{align}
By \eqref{eq:I1} and \eqref{eq:kerneldef2}, for every $T_{1}<T_{2},$ we can write
\begin{equation*}
I_{T_{2}}^{\varepsilon}-\E [I_{T_{2}}^{\varepsilon}]-\left(I_{T_{1}}^{\varepsilon}-\E[I_{T_{1}}^{\varepsilon}]\right)
  =\int_{\R_{+}^{2}}\Indi{\mathcal{K}_{T_{1},T_{2}}}(s,t)\left(p_{\varepsilon}(B_{t}-B_{s})-\E\left[p_{\varepsilon}(B_{t}-B_{s})\right]\right)dsdt,
\end{equation*}
and 
\begin{align*}
I_{2q}(h_{2q,T_{2}}^{\varepsilon})-I_{2q}(h_{2q,T_{1}}^{\varepsilon})
  =\int_{\R_{+}^{2}}\Indi{\mathcal{K}_{T_{1},T_{2}}}(s,t)I_{2q}(f_{2q,s,t}^{\varepsilon})dsdt.
\end{align*}
By \eqref{eq:CovpF}, we deduce the following identity for every $T_{1}\leq T_{2}$ and $\widetilde{T}_{1}\leq \widetilde{T}_{2}$, 
\begin{equation}\label{eq:CovIF}
\mathrm{Cov}\left[I_{T_{2}}^{\varepsilon}-I_{T_{1}}^{\varepsilon},I_{\widetilde{T}_{2}}^{\varepsilon}-I_{\widetilde{T}_{1}}^{\varepsilon}\right]
  = \int_{\R_{+}^{4}}\Indi{\mathcal{K}_{T_{1},T_{2}}}(s_{1},t_{1})\Indi{\mathcal{K}_{\widetilde{T}_{1},\widetilde{T}_{2}}}(s_{2},t_{2})F_{\varepsilon,s_{2}-s_{1}}(t_{1}-s_{1},t_{2}-s_{2})ds_{1}ds_{2}dt_{1}dt_{2}.
\end{equation}
Similarly, by \eqref{eq:innerproductG}, 
\begin{multline}\label{eq:covbasicchaos}
\E\left[(I_{2q}(h_{2q,T_{2}}^{\varepsilon})-I_{2q}(h_{2q,T_{1}}^{\varepsilon}))(I_{2q}(h_{2q,\widetilde{T}_{2}}^{\varepsilon})-I_{2q}(h_{2q,\widetilde{T}_{1}}^{\varepsilon}))\right]\\
  =\beta_{q}\int_{\R_{+}^{4}}\Indi{\mathcal{K}_{T_{1},T_{2}}}(s_{1},t_{1})\Indi{\mathcal{K}_{\widetilde{T}_{1},\widetilde{T}_{2}}}(s_{2},t_{2})G_{\varepsilon,s_{2}-s_{1}}^{(q)}(t_{1}-s_{1},t_{2}-s_{2})ds_{1}ds_{2}dt_{1}dt_{2},
\end{multline}
where $\beta_{q}$ is defined by \eqref{eq:betaq}.

We end this section by introducing some notation, which will be used throughout the paper to describe expectations of the form $\E\left[p_{\varepsilon}(B_{t_{1}}-B_{s_{1}})p_{\varepsilon}(B_{t_{2}}-B_{s_{2}})\right]$. For every $n$-dimensional non-negative definite matrix $A$, we will denote by $\phi_{A}$ the density function of a Gaussian vector with mean zero and covariance $A$. In addition, we will denote by $\Abs{A}$ the determinant of $A$, and by $I_{n}$ the identity matrix of dimension $n$. 

Let $\Sigma$ be the covariance matrix of the 2-dimensional random vector $(B_{t_{1}}^{(1)}-B_{s_{1}}^{(1)},B_{t_{2}}^{(1)}-B_{s_{2}}^{(1)})$. Then, the covariance matrix of the $2d$-dimensional random vector $(B_{t_{1}}-B_{s_{1}},B_{t_{2}}-B_{s_{2}})$ can be written as
\begin{align*}
\mathrm{Cov}(B_{t_{1}}-B_{s_{1}},B_{t_{2}}-B_{s_{2}})
  &=I_{d}\otimes \Sigma,
\end{align*}
where in the previous identity $\otimes$ denotes the Kronecker product of matrices. Consider the $2d$-dimensional Gaussian density $\phi_{\varepsilon I_{2d}}(x,y)=p_{\varepsilon}(x)p_{\varepsilon}(y)$, where $x,y\in \R^{d}$, and denote by  $*$  the convolution operation. Then we have that 
\begin{align*}
\E\left[p_{\varepsilon}(B_{t_{1}}-B_{s_{1}})p_{\varepsilon}(B_{t_{2}}-B_{s_{2}})\right]
  &=\int_{\R^{2d}}\phi_{\varepsilon I_{2d}}(x,y)\phi_{I_{d}\otimes \Sigma}(-x,-y)dxdy\\
	&= \phi_{\varepsilon I_{2d}}*\phi_{I_{d}\otimes \Sigma}(0,0)=(2\pi)^{-d}\left|\varepsilon I_{2d} + I_{d}\otimes\Sigma\right|^{-\frac{1}{2}}.
\end{align*}
From the previous equation it follows that
\begin{align}\label{eq:pdet0}
\E\left[p_{\varepsilon}(B_{t_{1}}-B_{s_{1}})p_{\varepsilon}(B_{t_{2}}-B_{s_{2}})\right]
  &= (2\pi)^{-d}\left|\varepsilon I_{2} + \Sigma\right|^{-\frac{d}{2}}.
\end{align}
The right-hand side of the previous identity can be rewritten as follows. Define the function 
\begin{align}\label{eq:Ldef}
\Theta_{\varepsilon}(x,u_{1},u_{2})
  &:= \varepsilon^2+\varepsilon(u_{1}^{2H}+u_{2}^{2H})+u_{1}^{2H}u_{2}^{2H}-\mu(x,u_{1},u_{2})^2.
\end{align}
Then, using \eqref{eq:covariancemu}, we can easily show that 
\begin{align*}
\Abs{\varepsilon I_{2}+\Sigma}
  &=\Theta_{\varepsilon}(s_{2}-s_{1},t_{1}-s_{1},t_{2}-s_{2}),
\end{align*}
which, by \eqref{eq:pdet0}, implies that 
\begin{align}\label{eq:pdet}
\E\left[p_{\varepsilon}(B_{t_{1}}-B_{s_{1}})p_{\varepsilon}(B_{t_{2}}-B_{s_{2}})\right]
  &= (2\pi)^{-d}\Theta_{\varepsilon}(s_{2}-s_{1},t_{1}-s_{1},t_{2}-s_{2})^{-\frac{d}{2}}.
\end{align}
Therefore,  we can write $\E\left[(I_{T}^{\varepsilon})^2\right]$, as 
\begin{align}\label{eq:Im2}
\E\left[(I_{T}^{\varepsilon})^2\right]
  &= (2\pi)^{-d}\int_{(T\Rc)^2}\Theta_{\varepsilon}(s_{2}-s_{1},t_{1}-s_{1},t_{2}-s_{2})^{-\frac{d}{2}}ds_{1}ds_{2}dt_{1}dt_{2}.
\end{align}
Finally, we prove the following inequality, which estimates the function $F_{\varepsilon,x}(u_{1},u_{2})$, defined in \eqref{eq:Fdef}, in terms of $\Theta_{\varepsilon}(x,u_{1},u_{2})$
\begin{align}\label{ineq:FTheta}
F_{\varepsilon,x}(u_{1},u_{2})
  &\leq (2\pi)^{-d}\left(\frac{d}{2}+1\right)\frac{\mu(x,u_{1},u_{2})^{2}}{u_{1}^{2H}u_{2}^{2H}}\Theta_{\varepsilon}(x,u_{1},u_{2})^{-\frac{d}{2}}.
\end{align}  
Indeed, using relation \eqref{eq:Fidentity}, as well as the binomial theorem, we deduce that 
\begin{align*}
F_{\varepsilon,x}(u_{1},u_{2})
  =&(2\pi)^{-d}(\varepsilon+u_{1}^{2H})^{-\frac{d}{2}-1}(\varepsilon+u_{2}^{2H})^{-\frac{d}{2}-1}\mu(x,u_{1},u_{2})^{2}\\
  &\times \sum_{q=0}^{\infty}\frac{(\frac{d}{2})^{\overline{q+1}}}{(q+1)!}\left(\frac{\mu(x,u_{1},u_{2})^{2}}{(\varepsilon+u_{1}^{2H})(\varepsilon+u_{2}^{2H})}\right)^{q},
\end{align*} 
where $a^{\overline{n}}$ denotes the $n$-th raising factorial of $a$. Hence, using the fact that 
\begin{align*}
\frac{(\frac{d}{2})^{\overline{q+1}}}{(q+1)!}
  =\frac{(\frac{d}{2}+q)}{q+1}\frac{(\frac{d}{2})^{\overline{q}}}{q!}
	\leq \left(\frac{d}{2}+1\right)\frac{(\frac{d}{2})^{\overline{q}}}{q!}, 
\end{align*}
we deduce that 
\begin{align*}
F_{\varepsilon,x}(u_{1},u_{2})
  &\leq (2\pi)^{-d}\left(\frac{d}{2}+1\right)(1+u_{1}^{2H})^{-\frac{d}{2}}(1+u_{2}^{2H})^{-\frac{d}{2}}\frac{\mu(x,u_{1},u_{2})^{2}}{(\varepsilon+u_{1}^{2H})(\varepsilon+u_{2}^{2H})}\\
	&\times\sum_{q=0}^{\infty}\frac{(\frac{d}{2})^{\overline{q}}}{q!}\left(\frac{\mu(x,u_{1},u_{2})^{2}}{(\varepsilon+u_{1}^{2H})(\varepsilon+u_{2}^{2H})}\right)^{q},
\end{align*} 
which, by the binomial theorem, implies \eqref{ineq:FTheta}.

Due to relations \eqref{eq:covbasicchaos} and \eqref{eq:pdet}, the integrals 
\begin{align}\label{eq:GandF}
\int_{[0,T]^{3}}G_{\varepsilon}^{(q)}(x,u_{1},u_{2})dxdu_{1}du_{2} \ \ \ \ \text{ and }\ \ \ \int_{[0,T]^{3}}F_{\varepsilon}(x,u_{1},u_{2})dxdu_{1}du_{2}
\end{align}
will frequently appear throughout the paper, and their asymptotic behavior as $\varepsilon\rightarrow0$ will depend on the value Hurst parameter $H$. In order to simplify the study of such integrals, we introduce the following sets
\begin{align}\label{eq:Scdef}
\Sc_{1}
  &:=\{(x,u_{1},u_{2})\in\R_{+}^{3}\ |\ x+u_{2}-u_{1}\geq0, u_{1}-x\geq0\},\nonumber\\
\Sc_{2}
  &:=\{(x,u_{1},u_{2})\in\R_{+}^{3}\ |\ u_{1}-x-u_{2}\geq0\},\nonumber\\
\Sc_{3}
  &:=\{(x,u_{1},u_{2})\in\R_{+}^{3}\ |\ x-u_{1}\geq0\}.
\end{align}
The sets $\Sc_{1},\Sc_{2}$ and $\Sc_{3}$ satisfy $\R_{+}^{3}=\cup_{i=1}^{3}\Sc_{i}$, and $|\Sc_{i}\cap \Sc_{j}|=0$ for $i\neq j$. In addition, they satisfy the property that the integrals 
of    $G^{(q)}_\varepsilon$ and $F_\varepsilon$ over $[0,T]^3\cap \mathcal{S}_i$
are considerably simpler to handle than the integrals \eqref{eq:GandF}. This phenomenon arises from the local nondeterminism property of the factional Brownian motion (see Lemma \ref{lem:local_non_determinism}). 
\section{Behavior of the covariances of \texorpdfstring{$I_{T}^{\varepsilon}$}{TEXT}  and its chaotic components}\label{section:variances}
In this section we describe the behavior as $\varepsilon\rightarrow0$ of the covariance of $I_{T_{1}}^{\varepsilon}$ and $I_{T_{2}}^{\varepsilon}$, as well as the covariance of $I_{2q}(h_{2q,T_{1}}^{\varepsilon})$ and $I_{2q}(h_{2q,T_{2}}^{\varepsilon})$, for $0\leq T_{1}\leq T_{2}$.
\begin{Teorema}\label{teo:covchaos}
Let $T_{1},T_{2}\geq0$ be fixed. Then, if $\frac{3}{2d}<H<\frac{3}{4}$,
\begin{align*}
\lim_{\varepsilon\rightarrow0}\varepsilon^{d-\frac{3}{2H}}\E\left[I_{2q}(h_{2q,T_{1}}^{\varepsilon})I_{2q}(h_{2q,T_{2}}^{\varepsilon})\right]
  &=\sigma_{q}^{2}(T_{1}\wedge T_{2}),
\end{align*}
where 
\begin{align}\label{eq:sigqdef}
\sigma_{q}^{2}
  &:=2\beta_{q}\int_{\R_{+}^{3}}G_{1,x}^{(q)}(u_{1},u_{2})dxdu_{1}du_{2},
\end{align}
$\beta_{q}$ is defined by \eqref{eq:betaq} and $G_{1,x}^{(q)}(u_{1},u_{2})$ by \eqref{eq:Gdef}. Moreover, we have 
\begin{align}\label{eq:sumsigmaq}
\sum_{q=1}^{\infty}\sigma_{q}^2
  &=\sigma^2,
\end{align}
where $\sigma^{2}$ is a finite constant given by 
\begin{align}\label{eq:sigmadef}
\sigma^{2}
  &:=2\int_{\R_{+}^{3}}F_{1,x}(u_{1},u_{2})dxdu_{1}du_{2},
\end{align}
and $F_{1,x}(u_{1},u_{2})$ is defined in \eqref{eq:Fdef}.
\end{Teorema}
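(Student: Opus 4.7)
My plan is to start from identity \eqref{eq:covbasicchaos} with $T_1=\widetilde{T}_1=0$, which gives
\begin{align*}
\E\left[I_{2q}(h_{2q,T_1}^{\varepsilon})I_{2q}(h_{2q,T_2}^{\varepsilon})\right]
  =\beta_q\int_{(T_1\Rc)\times(T_2\Rc)}G_{\varepsilon,s_2-s_1}^{(q)}(t_1-s_1,t_2-s_2)ds_1dt_1ds_2dt_2.
\end{align*}
Assume, without loss of generality, that $T_1\leq T_2$. I would first change variables via $u_1=t_1-s_1$, $u_2=t_2-s_2$, and split the $(s_1,s_2)$-integration into the two subregions $\{s_2\geq s_1\}$ and $\{s_2<s_1\}$. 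Using the symmetry $G_{\varepsilon,-x}^{(q)}(u_1,u_2)=G_{\varepsilon,x}^{(q)}(u_2,u_1)$ (which follows from $\mu(-x,u_1,u_2)=\mu(x,u_2,u_1)$), and after exchanging $u_1\leftrightarrow u_2$ in the second region, both subregions contribute integrals of the common integrand $G_{\varepsilon,x}^{(q)}(u_1,u_2)$ over bounded domains in $(s,x,u_1,u_2)\in[0,T_1]\times\R_{+}^{3}$, with $s=s_1\wedge s_2$ and $x=|s_2-s_1|$.

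Next, I would rescale $(x,u_1,u_2)=\varepsilon^{\frac{1}{2H}}(y,v_1,v_2)$. The self-similarity of $B$ gives $\mu(\varepsilon^{\frac{1}{2H}}y,\varepsilon^{\frac{1}{2H}}v_1,\varepsilon^{\frac{1}{2H}}v_2)=\varepsilon\,\mu(y,v_1,v_2)$, and together with $\varepsilon+(\varepsilon^{\frac{1}{2H}}v_i)^{2H}=\varepsilon(1+v_i^{2H})$, a direct computation yields the scaling law
\begin{align*}
G_{\varepsilon,x}^{(q)}(u_1,u_2)=\varepsilon^{-d}G_{1,y}^{(q)}(v_1,v_2).
\end{align*}
Combined with the Jacobian $\varepsilon^{\frac{3}{2H}}$, this transforms the two contributions into
\begin{align*}
\varepsilon^{d-\frac{3}{2H}}\E\left[I_{2q}(h_{2q,T_1}^{\varepsilon})I_{2q}(h_{2q,T_2}^{\varepsilon})\right]
  =\beta_q\int_0^{T_1}\int_{D_{\varepsilon}^{+}(s)\cup D_{\varepsilon}^{-}(s)}G_{1,y}^{(q)}(v_1,v_2)dydv_1dv_2\,ds,
\end{align*}
where $D_{\varepsilon}^{\pm}(s)\subset\R_{+}^{3}$ are the rescaled domains. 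For each $s\in(0,T_1)$, $D_{\varepsilon}^{\pm}(s)\nearrow\R_{+}^{3}$ as $\varepsilon\rightarrow0$; moreover, the integrability of $G_{1,y}^{(q)}(v_1,v_2)$ over $\R_{+}^{3}$, which is ensured by Lemma \ref{lem:finiteFGintegral}, provides a uniform dominating bound. Applying dominated convergence, I obtain
\begin{align*}
\lim_{\varepsilon\rightarrow0}\varepsilon^{d-\frac{3}{2H}}\E\left[I_{2q}(h_{2q,T_1}^{\varepsilon})I_{2q}(h_{2q,T_2}^{\varepsilon})\right]
  =2\beta_q T_1\int_{\R_{+}^{3}}G_{1,y}^{(q)}(v_1,v_2)dydv_1dv_2=\sigma_q^{2}(T_1\wedge T_2).
\end{align*}

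For the sum identity \eqref{eq:sumsigmaq}, I would apply Tonelli's theorem to the pointwise non-negative series \eqref{eq:Fidentitychaos}, $F_{1,x}(u_1,u_2)=\sum_{q\geq1}\beta_qG_{1,x}^{(q)}(u_1,u_2)$, and exchange the sum and the integral to conclude $\sigma^{2}=2\int F_{1,x}=\sum_q 2\beta_q\int G_{1,x}^{(q)}=\sum_q\sigma_q^{2}$, with the finiteness of $\sigma^2$ guaranteed again by Lemma \ref{lem:finiteFGintegral}.

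I expect the main technical obstacle to be the careful bookkeeping of the rescaled domains $D_{\varepsilon}^{\pm}(s)$ near the endpoints $s\in\{0,T_1\}$, and the verification that these boundary effects are of order $o(1)$ uniformly in $\varepsilon$, so that the two subregions combine cleanly to produce the factor $2T_1=2(T_1\wedge T_2)$ in the limit. Once Lemma \ref{lem:finiteFGintegral} is invoked as a uniform majorant, the application of dominated convergence itself is routine.
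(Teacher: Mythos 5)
Your proposal is correct, but it is organized differently from the paper's proof. The paper does not compute $\E[I_{2q}(h_{2q,T_{1}}^{\varepsilon})I_{2q}(h_{2q,T_{2}}^{\varepsilon})]$ directly: it reduces the theorem to two statements about increments, namely that $\varepsilon^{d-\frac{3}{2H}}\E[(I_{2q}(h_{2q,b}^{\varepsilon})-I_{2q}(h_{2q,a}^{\varepsilon}))(I_{2q}(h_{2q,\beta}^{\varepsilon})-I_{2q}(h_{2q,\alpha}^{\varepsilon}))]\rightarrow 0$ for separated intervals $a<b<\alpha<\beta$ (proved by inserting indicators $\Indi{(\gamma,\infty)}$ in one of the variables $x,u_{1},u_{2}$ with $\gamma=\frac{\alpha-b}{2}$) and that the variance of an increment scales like $\sigma_{q}^{2}(b-a)$; assembling the covariance from these increment statements requires an (implicit in the paper) polarization/approximation step for adjacent intervals. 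You instead evaluate the covariance in one shot over $(T_{1}\Rc)\times(T_{2}\Rc)$, splitting only according to the sign of $s_{2}-s_{1}$ and using $G_{\varepsilon,-x}^{(q)}(u_{1},u_{2})=G_{\varepsilon,x}^{(q)}(u_{2},u_{1})$. The analytic core is identical in both routes — identity \eqref{eq:covbasicchaos}, the change of variables to $(s,x,u_{1},u_{2})$, the exact scaling $G_{\varepsilon,\varepsilon^{\frac{1}{2H}}x}^{(q)}(\varepsilon^{\frac{1}{2H}}u_{1},\varepsilon^{\frac{1}{2H}}u_{2})=\varepsilon^{-d}G_{1,x}^{(q)}(u_{1},u_{2})$, and Lemma \ref{lem:finiteFGintegral} as the majorant for dominated convergence — and your treatment of \eqref{eq:sumsigmaq} via Tonelli applied to \eqref{eq:Fidentitychaos} is the same as the paper's monotone-convergence argument. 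Your direct route buys a cleaner proof of the stated covariance formula, sidestepping the adjacent-interval bookkeeping entirely; the paper's increment formulation buys a structural statement (linear variance growth plus asymptotic decorrelation of increments over disjoint intervals, consonant with the Brownian limit) that is reused verbatim in the proof of Theorem \ref{teo:covchaoslog}. One remark on your final paragraph: the boundary worry near $s\in\{0,T_{1}\}$ is unnecessary. Since $\varepsilon^{-\frac{1}{2H}}(T_{1}-s)$ and $\varepsilon^{-\frac{1}{2H}}(T_{2}-s)-y$ increase as $\varepsilon\downarrow0$, the indicators of your rescaled domains converge monotonically to $1$ for a.e.\ $(s,y,v_{1},v_{2})\in[0,T_{1}]\times\R_{+}^{3}$, so the majorant $\Indi{[0,T_{1}]}(s)G_{1,y}^{(q)}(v_{1},v_{2})$, integrable by Lemma \ref{lem:finiteFGintegral}, already settles the limit; no uniform-in-$\varepsilon$ boundary estimate is needed.
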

\begin{proof}
To prove the result, it suffices to show that for each $a<b<\alpha<\beta$, 
\begin{align}\label{eq:covfddchaosdisjoint}
\lim_{\varepsilon\rightarrow0}\varepsilon^{d-\frac{3}{2H}}\E\left[(I_{2q}(h_{2q,b}^{\varepsilon})-I_{2q}(h_{2q,a}^{\varepsilon}))(I_{2q}(h_{2q,\beta}^{\varepsilon})-I_{2q}(h_{2q,\alpha}^{\varepsilon}))\right]=0,
\end{align}
and
\begin{align}\label{eq:varfddchaos}
\lim_{\varepsilon\rightarrow0}\varepsilon^{d-\frac{3}{2H}}\E\left[(I_{2q}(h_{2q,b}^{\varepsilon})-I_{2q}(h_{2q,a}^{\varepsilon}))^2\right]= \sigma_{q}^{2}(b-a).
\end{align}
First we prove \eqref{eq:covfddchaosdisjoint}. Set
\[
\Phi^\varepsilon =\E\left[(I_{2q}(h_{2q,b}^{\varepsilon})-I_{2q}(h_{2q,a}^{\varepsilon}))(I_{2q}(h_{2q,\beta}^{\varepsilon})-I_{2q}(h_{2q,\alpha}^{\varepsilon}))\right].
\]
Define the set $\mathcal{K}_{T_{1},T_{2}}$ by \eqref{eq:Sdef}, and $\gamma:=\frac{\alpha-b}{2}>0$. We can easily check that for every $(s_{1},t_{1})\in \mathcal{K}_{a,b}$, and $(s_{2},t_{2})\in \mathcal{K}_{\alpha,\beta}$, it holds that either $t_{2}-s_{2}>\gamma$, or $s_{2}-s_{1}\geq\gamma$, and hence, by taking $T_{1}=a$, $T_{2}=b$, $\widetilde{T}_{1}=\alpha$, $\widetilde{T}_{2}=\beta$ in \eqref{eq:covbasicchaos}, we get
\begin{eqnarray}   \nonumber
\Abs{ \Phi^\varepsilon}
 &\le &
  \beta_{q}\int_{[0,\beta]^{4}}\Indi{(\gamma,\infty)}(t_{2}-s_{2})G_{\varepsilon,s_{2}-s_{1}}^{(q)}(t_{1}-s_{1},t_{2}-s_{2})ds_{1}ds_{2}dt_{1}dt_{2}\\
	&& +\beta_{q}\int_{[0,\beta]^{4}}\Indi{(\gamma,\infty)}(s_{2}-s_{1})G_{\varepsilon,s_{2}-s_{1}}^{(q)}(t_{1}-s_{1},t_{2}-s_{2})ds_{1}ds_{2}dt_{1}dt_{2}. \label{eq:incchaosabs}
\end{eqnarray}
Changing the coordinates $(s_{1},s_{2},t_{1},t_{2})$ by $(s:=s_{1},x:=s_{2}-s_{1},u_{1}:=t_{1}-s_{1},u_{2}:=t_{2}-s_{2})$ for $s_{2}\geq s_{2}$, and by $(s:=s_{2},x:=s_{1}-s_{2},u_{1}:=t_{1}-s_{1},u_{2}:=t_{2}-s_{2})$ for $s_{2}\leq s_{1}$, in \eqref{eq:incchaosabs}, using the fact that $G_{\varepsilon,-x}^{(q)}(u_{1},u_{2})=G_{\varepsilon,x}^{(q)}(u_{2},u_{1})$, and integrating the $s_{1}$ variable, we can prove that  
\[
\Abs{ \Phi^\varepsilon}
   \leq  \beta_{q}\beta\int_{[0,\beta]^{3}} \left( \Indi{(\gamma,\infty)}(u_{1})+\Indi{(\gamma,\infty)}(u_{2}) +\Indi{(\gamma,\infty)}(x) \right) G_{\varepsilon,x}^{(q)}(u_{1},u_{2})dxdu_{1}du_{2}.
\]
Next, changing the coordinates $(x,u_{1},u_{2})$ by $(\varepsilon^{-\frac{1}{2H}}x,\varepsilon^{-\frac{1}{2H}}u_{1},\varepsilon^{-\frac{1}{2H}}u_{2})$, and using the fact that $G_{\varepsilon,\varepsilon^{\frac{1}{2H}}x}^{(q)}(\varepsilon^{\frac{1}{2H}}u_{1},\varepsilon^{\frac{1}{2H}}u_{2})=\varepsilon^{-d}G_{1,x}^{(q)}(u_{1},u_{2})$, we get
\begin{align*}
\Abs{ \Phi^\varepsilon}
  \leq & \varepsilon^{\frac{3}{2H}-d}\beta_{q} \beta\int_{[0,\varepsilon^{-\frac{1}{2H}}\beta]^{3}}
  \left(  \Indi{(\gamma,\infty)}(\varepsilon^{\frac{1}{2H}}u_{1})+\Indi{(\gamma,\infty)}(\varepsilon^{\frac{1}{2H}}u_{2})+\Indi{(\gamma,\infty)}(\varepsilon^{\frac{1}{2H}}x)\right)\\
  &\times
  G_{1,x}^{(q)}(u_{1},u_{2})dxdu_{1}du_{2}.
\end{align*}
  Since $\gamma>0$, the arguments in the previous integrals converge to zero pointwise, and are dominated by the function $3\beta_{q}\beta G_{1,	x}^{(q)}(u_{1},u_{2})$, which is integrable by Lemma \ref{lem:finiteFGintegral} due to the condition $\frac{3}{2d}<H<\frac{3}{4}$. Hence, by the dominated convergence theorem, 
\begin{align*}
\lim_{\varepsilon\rightarrow0}\varepsilon^{d-\frac{3}{2H}}\Abs{ \Phi^\varepsilon}
  &=0,
\end{align*}
as required. Next we prove \eqref{eq:varfddchaos}. By taking $T_{1}=\widetilde{T}_{1}=a$, and $T_{2}=\widetilde{T}_{2}=b$ in \eqref{eq:covbasicchaos}, we deduce that 
\begin{align*}
\E\left[(I_{2q}(h_{2q,b}^{\varepsilon})-I_{2q}(h_{2q,a}^{\varepsilon}))^2\right]
  &=2\beta_{q}\int_{[0, b]^{4}}\Indi{\{s_{1}\leq s_{2}\}}\Indi{\mathcal{K}_{a,b}}(s_{1},t_{1})\Indi{\mathcal{K}_{a,b}}(s_{2},t_{2})\\
	&\times G_{\varepsilon,s_{2}-s_{1}}^{(q)}(t_{1}-s_{1},t_{2}-s_{2})ds_{1}ds_{2}dt_{1}dt_{2}.
\end{align*}
Changing the coordinates $(s_{1},s_{2},t_{1},t_{2})$ by $(s_{1},x:=s_{2}-s_{1},u_{1}:=t_{1}-s_{1},u_{2}:=t_{2}-s_{2})$, we get
\begin{multline}\label{eq:InckerG}
\E\left[(I_{2q}(h_{2q,b}^{\varepsilon})-I_{2q}(h_{2q,a}^{\varepsilon}))^2\right]\\
\begin{aligned}
  &=2\beta_{q}\int_{[0,b]^{4}}\Indi{\mathcal{K}_{a,b}}(s_{1},s_{1}+u_{1})\Indi{\mathcal{K}_{a,b}}(s_{1}+x,s_{1}+x+u_{2}) G_{\varepsilon,x}^{(q)}(u_{1},u_{2})ds_{1}dxdu_{1}du_{2}\\
	&=2\beta_{q}\int_{[0,b]^{3}}\int_{(a-u_{1})_{+}\vee (a-x-u_{2})_{+}}^{(b-u_{1})_{+}\wedge(b-x-u_{2})_{+}}ds_{1}G_{\varepsilon,x}^{(q)}(u_{1},u_{2})dxdu_{1}du_{2}.
\end{aligned}
\end{multline}
Notice that $G_{\varepsilon,\varepsilon^{\frac{1}{2H}}x}^{(q)}(\varepsilon^{\frac{1}{2H}}u_{1},\varepsilon^{\frac{1}{2H}}u_{2})=\varepsilon^{-d}G_{1,x}(u_{1},u_{2})$. Therefore, integrating the variable $s_{1}$, and changing the coordinates $(x,u_{1},u_{2})$ by $(\varepsilon^{-\frac{1}{2H}}x,\varepsilon^{-\frac{1}{2H}}u_{1},\varepsilon^{-\frac{1}{2H}}u_{2})$ in \eqref{eq:InckerG}, we conclude that 
\begin{eqnarray}   \nonumber    \label{eq:VarIincfinalchaos}
 &&\varepsilon^{d-\frac{3}{2H}}\E\left[(I_{2q}(h_{2q,b}^{\varepsilon})-I_{2q}(h_{2q,a}^{\varepsilon}))^2\right]
  = 2\beta_{q}\int_{[0,\varepsilon^{-\frac{1}{2H}}b]^{3 }}G_{1,x}^{(q)}(u_{1},u_{2})\\  \nonumber
	&& \qquad \qquad  \qquad \qquad \qquad   \times  \Big[  (b-\varepsilon^{\frac{1}{2H}}u_{1})_{+}\wedge (b-\varepsilon^{\frac{1}{2H}}(x+u_{2}))_{+} \\
	&&  \qquad \qquad  \qquad  \qquad \qquad  \qquad -(a-\varepsilon^{\frac{1}{2H}}u_{1})_{+}\vee(a-\varepsilon^{\frac{1}{2H}}(x+u_{2}))_{+} \Big]dxdu_{1}du_{2}.
\end{eqnarray}	
The integrand in \eqref{eq:VarIincfinalchaos} converges increasingly to $2(b-a)G_{1,x}^{(q)}(u_{1},u_{2})$ as $\varepsilon\rightarrow0$, which is integrable by Lemma \ref{lem:finiteFGintegral}. Identity \eqref{eq:varfddchaos} then follows by applying the dominated convergence theorem in \eqref{eq:VarIincfinalchaos}. 

Relation \eqref{eq:sumsigmaq} is obtained by integrating both sides of relation \eqref{eq:Fidentitychaos} over the variables $x,u_{1},u_{2}\geq0$, for $\varepsilon=1$, and then using the monotone convergence theorem. The constant $\sigma^2$ is finite  by Lemma \ref{lem:finiteFGintegral}. 
The proof is now complete.
\end{proof}

In order to determine the behavior of the covariances of $I_{T}^{\varepsilon}$ for the case $H=\frac{3}{4}$, we will first prove that the second chaotic component $I_{2}(h_{2,T}^{\varepsilon})$ characterizes the asymptotic behavior of $I_{T}^{\varepsilon}-\E\left[I_{T}^{\varepsilon}\right]$ as $\varepsilon\rightarrow\infty$, for every $H\geq\frac{3}{4}$. 

We start by showing that, after a suitable rescaling, the sequence $I_{2}(h_{2,T}^{\varepsilon})$ approximates $I_{T}^{\varepsilon}-\E\left[I_{T}^{\varepsilon}\right]$ in $L^{2}(\Omega)$ for  $H>\frac{3}{4}$. This result will be latter used in the proof of Theorem \ref{Teo:convergenceHermite}. 
\begin{Lema}\label{lem:chaos4tozero}
Let $\frac{3}{4}<H<1$ be fixed. Then, 
\begin{align*}
\lim_{\varepsilon\rightarrow0}\varepsilon^{\frac{d}{2}-\frac{3}{2H}+1}\Norm{I_{T}^{\varepsilon}-\E\left[I_{T}^{\varepsilon}\right]-J_{2}(I_{T}^{\varepsilon})}_{L^{2}(\Omega)}
  &=0.
\end{align*}
\end{Lema}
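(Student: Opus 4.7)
The plan is to exploit the chaos decomposition \eqref{eq:chaos2} together with orthogonality of Wiener chaoses. Since
\[
I_T^\varepsilon-\E[I_T^\varepsilon]-J_2(I_T^\varepsilon)=\sum_{q\geq 2}I_{2q}(h_{2q,T}^\varepsilon),
\]
orthogonality yields
\[
\bigl\|I_T^\varepsilon-\E[I_T^\varepsilon]-J_2(I_T^\varepsilon)\bigr\|_{L^2(\Omega)}^{2}=\sum_{q\geq 2}\E\bigl[I_{2q}(h_{2q,T}^\varepsilon)^{2}\bigr].
\]
I would then apply \eqref{eq:covbasicchaos} with $T_1=\widetilde{T}_1=0$, $T_2=\widetilde{T}_2=T$, and perform the change of variables $(s_1,s_2,t_1,t_2)\mapsto(s,x,u_1,u_2)$ used in the proof of Theorem \ref{teo:covchaos}. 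Integrating out $s$, whose range has length bounded by $T$, and then exchanging sum and integral by Fubini--Tonelli together with the identity \eqref{eq:Fidentitychaos}, this gives
\[
\sum_{q\geq 2}\E\bigl[I_{2q}(h_{2q,T}^\varepsilon)^{2}\bigr]\leq 2T\int_{[0,T]^{3}}\bigl(F_{\varepsilon,x}(u_1,u_2)-\beta_1 G_{\varepsilon,x}^{(1)}(u_1,u_2)\bigr)\,dx\,du_1\,du_2.
\]

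The central pointwise estimate is that the leading-order contributions of $F_\varepsilon$ and $\beta_1 G_\varepsilon^{(1)}$, expanded as power series in $\rho^{2}:=\mu^{2}/[(\varepsilon+u_1^{2H})(\varepsilon+u_2^{2H})]$, exactly cancel. Writing $F_\varepsilon=(2\pi)^{-d}(\varepsilon+u_1^{2H})^{-d/2}(\varepsilon+u_2^{2H})^{-d/2}\bigl[(1-\rho^{2})^{-d/2}-1\bigr]$ by \eqref{eq:Fidentity}, and applying the Taylor remainder estimate $(1-\rho^{2})^{-d/2}-1-\tfrac{d}{2}\rho^{2}\leq C_d\,\rho^{4}(1-\rho^{2})^{-d/2-2}$ combined with the identity $(1-\rho^{2})(\varepsilon+u_1^{2H})(\varepsilon+u_2^{2H})=\Theta_\varepsilon(x,u_1,u_2)$, I would obtain the clean bound
\[
F_{\varepsilon,x}(u_1,u_2)-\beta_1 G_{\varepsilon,x}^{(1)}(u_1,u_2)\leq C_d\,\mu(x,u_1,u_2)^{4}\,\Theta_\varepsilon(x,u_1,u_2)^{-\tfrac{d}{2}-2}.
\]

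The next step is a self-similarity rescaling $(x,u_1,u_2)=\varepsilon^{1/(2H)}(\tilde x,\tilde u_1,\tilde u_2)$, under which $\mu\mapsto\varepsilon\mu$ and $\Theta_\varepsilon\mapsto\varepsilon^{2}\Theta_1$, giving
\[
\int_{[0,T]^{3}}\mu^{4}\,\Theta_\varepsilon^{-\tfrac{d}{2}-2}\,dx\,du_1\,du_2=\varepsilon^{\tfrac{3}{2H}-d}\int_{[0,T\varepsilon^{-1/(2H)}]^{3}}\mu^{4}\,\Theta_1^{-\tfrac{d}{2}-2}\,dx\,du_1\,du_2.
\]
Multiplying by the target normalization $\varepsilon^{d-3/H+2}$ leaves the prefactor $\varepsilon^{2-3/(2H)}$, which is a \emph{positive} power of $\varepsilon$ precisely because $H>3/4$.

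The main obstacle is to control the remaining integral on the expanding domain $[0,T\varepsilon^{-1/(2H)}]^{3}$. The most delicate contribution comes from the regime $x\gg u_1,u_2$ (a subregion of $\Sc_3$ in \eqref{eq:Scdef}), where one has the asymptotics $\mu\sim H(2H-1)u_1u_2x^{2H-2}$ and $\Theta_1\asymp(1+u_1^{2H})(1+u_2^{2H})$, so that the $x$-integral grows at worst like $N^{8H-7}$ for $H>7/8$. Balancing this polynomial growth against $\varepsilon^{2-3/(2H)}$ produces the factor $\varepsilon^{2(1-H)/H}$, which still vanishes for every $H\in(3/4,1)$. Analogous estimates on the regions $\Sc_1$ and $\Sc_2$ rely crucially on the local non-determinism property of the fractional Brownian motion (Lemma \ref{lem:local_non_determinism}) to bound $\Theta_1$ from below. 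Carrying out this bookkeeping uniformly across the partition $\Sc_1\cup\Sc_2\cup\Sc_3$, and verifying that the combined polynomial growth rate strictly beats $\varepsilon^{-(2-3/(2H))}$, is where essentially all of the technical work concentrates.
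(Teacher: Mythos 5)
Your overall architecture coincides with the paper's (reduce to the chaos tail $q\ge 2$, bound by $2T\int_{[0,T]^3}(F_{\varepsilon,x}-\beta_1G^{(1)}_{\varepsilon,x})$, which is exactly \eqref{eq:Qrewritten}; rescale by $\varepsilon^{1/(2H)}$ leaving the positive power $\varepsilon^{2-3/(2H)}$; split along $\Sc_1\cup\Sc_2\cup\Sc_3$), but there is a genuine gap in your central pointwise estimate. The Taylor remainder $(1-y)^{-d/2}-1-\tfrac d2y\le C_dy^2(1-y)^{-d/2-2}$ is a true inequality, yet converting it through $(1-\rho^2)(\varepsilon+u_1^{2H})(\varepsilon+u_2^{2H})=\Theta_\varepsilon$ gives $F_{\varepsilon,x}-\beta_1G^{(1)}_{\varepsilon,x}\le C\mu^4\Theta_\varepsilon^{-\frac d2-2}$, which discards precisely the factor $(1-\rho^2)^2$ that controls the near-diagonal regime where $\rho^2\to1$ (overlapping intervals of comparable length). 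The paper instead uses $\frac{(d/2)^{\overline{q+2}}}{(q+2)!}\le(\tfrac d2+1)^2\frac{(d/2)^{\overline q}}{q!}$ to keep the remainder at $Cy^2(1-y)^{-d/2}$, i.e.
\begin{align*}
F_{\varepsilon,x}(u_1,u_2)-\beta_1G^{(1)}_{\varepsilon,x}(u_1,u_2)
  &\leq C\,\mu(x,u_1,u_2)^4(\varepsilon+u_1^{2H})^{-2}(\varepsilon+u_2^{2H})^{-2}\Theta_\varepsilon(x,u_1,u_2)^{-\frac d2},
\end{align*}
which is strictly smaller than your bound since $\Theta_\varepsilon\le(\varepsilon+u_1^{2H})(\varepsilon+u_2^{2H})$, and this difference matters. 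Concretely, in the rescaled $\Sc_1$ coordinates $(a,b,c)$ with $x=a$, $u_1=a+b$, $u_2=b+c$, take $a,c\in[0,1]$ and $b\in[B,2B]$: the intervals overlap on a set of length $b$, so $\mu\ge b^{2H}-1\gtrsim B^{2H}$, while $\Theta_1\le(1+(b+1)^{2H})^2-(b^{2H}-1)^2\lesssim B^{4H-1}$. Your integrand is then $\gtrsim B^{8H-(4H-1)(\frac d2+2)}$ on a set of volume $\asymp B$; for $d=2$ the dyadic block contributes $\gtrsim B^{4-4H}$, and summing up to $B\asymp N=\varepsilon^{-1/(2H)}$ and multiplying by $\varepsilon^{2-3/(2H)}=N^{-(4H-3)}$ leaves $N^{7-8H}\to\infty$ for all $H\in(\tfrac34,\tfrac78)$. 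So your master bound is provably insufficient on the near-diagonal parts of $\Sc_1$ (and likewise $\Sc_2$) for small $d$, even though the lemma must cover $d=2$.

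The rest of your plan is sound and, on $\Sc_3$, genuinely different from the paper: there local nondeterminism gives $\Theta_1\ge\delta(1+u_1^{2H})(1+u_2^{2H})$, so the two pointwise bounds are equivalent, and your direct growth count on the expanding domain ($x$-integral at worst $N^{8H-7}$, with the $(a+c)^{8H-7}$ factor plus $Hd>\tfrac32$ giving uniform $u$-integrability when $H\le\tfrac78$; net gain $\varepsilon^{2(1-H)/H}$) is a legitimate alternative to the paper's route, which avoids rescaling on $\Sc_3$ altogether and instead interpolates via Young's inequality, $\Theta_\varepsilon\ge\varepsilon^2+\delta(ac)^{2H}\ge\varepsilon^{2(1-\gamma)}(ac)^{2H\gamma}$ with a carefully tuned $\gamma$, to extract the residual power $\varepsilon^{dy}$. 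To repair your argument it suffices to replace your pointwise bound by the sharper one above; then on $\Sc_1,\Sc_2$ you can absorb two powers of $\mu$ via $\mu^2\le(u_1u_2)^{2H}\le(1+u_1^{2H})(1+u_2^{2H})$ and $(1+u_i^{2H})^{-1}\le u_i^{-2H}$, reducing everything, exactly as the paper does, to the finiteness of $\int_{\Sc_i}\frac{\mu^2}{(u_1u_2)^{2H}}\Theta_1^{-\frac d2}$, i.e.\ Lemma \ref{lem:inegraltightness} with $p=2$.
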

\begin{proof}
For $T>0$ fixed, define the quantity 
\begin{align*}
Q_{\varepsilon}
  &:=\Norm{I_{T}^{\varepsilon}-\E\left[I_{T}^{\varepsilon}\right]-J_{2}(I_{T}^{\varepsilon})}_{L^{2}(\Omega)}^2.
\end{align*}
From the chaos decomposition \eqref{eq:chaos2}, we get 
\begin{align}\label{eq:auxfinal}
Q_{\varepsilon}
  &= \E\left[(I_{T}^{\varepsilon})^2\right]
	-\E\left[I_{T}^{\varepsilon}\right]^2
	-\E\left[J_{2}(I_{T}^{\varepsilon})^2\right]\nonumber\\
	&= \E\left[(I_{T}^{\varepsilon})^2\right]
	-\E\left[I_{T}^{\varepsilon}\right]^2
	-2\Norm{h_{2,T}^{\varepsilon}}_{(\Hg^{d})^{\otimes 2}}^2\nonumber\\
		&= \E\left[(I_{T}^{\varepsilon})^2\right]
	-\E\left[I_{T}^{\varepsilon}\right]^2
	-2\Norm{\int_{T\Rc}f_{2,s,t}^{\varepsilon}dsdt}_{(\Hg^{d})^{\otimes 2}}^2.
\end{align}
By \eqref{eq:EI} and \eqref{eq:Im2}, the first two terms in the right-hand side of the previous identity can be written as  
\begin{align}\label{eq:Qdec1}
\E\left[(I_{T}^{\varepsilon})^2\right]
	  &=(2\pi)^{-d}\int_{(T\Rc)^2}\Theta_{\varepsilon}(s_{2}-s_{1},t_{1}-s_{1},t_{2}-s_{2})^{-\frac{d}{2}}ds_{1}ds_{2}dt_{1}dt_{2},
\end{align}
and 
\begin{align}\label{eq:Qdec2}
\E\left[I_{T}^{\varepsilon}\right]^2
	  &=(2\pi)^{-d}\int_{(T\Rc)^2}G_{\varepsilon,s_{2}-s_{1}}^{(0)}(t_{1}-s_{1},t_{2}-s_{2})ds_{1}ds_{2}dt_{1}dt_{2},
\end{align}
where $G_{\varepsilon,x}^{(q)}(u_{1},u_{2})$ and $\Theta_{\varepsilon}(x,u_{1},u_{2})$ are given by \eqref{eq:Gdef} and \eqref{eq:Ldef}, respectively. To handle the third term in \eqref{eq:auxfinal}, recall that the constants $\alpha_{q}$ are given by \eqref{eq:alphaqdef}, and notice that $\alpha_{1}=2d$. Hence, from \eqref{eq:innerproductG}, we deduce that
\begin{align}\label{eq:Qdec3}
\Norm{\int_{T\Rc }f_{2,s,t}^{\varepsilon}dsdt}_{(\Hg^{d})^{\otimes 2}}^2
  &=\frac{d(2\pi)^{-d}}{4}\int_{(T\Rc)^2}G_{\varepsilon,s_{2}-s_{1}}^{(1)}(t_{1}-s_{1},t_{2}-s_{2})ds_{1}ds_{2}dt_{1}dt_{2}.
\end{align}
From equations \eqref{eq:auxfinal}-\eqref{eq:Qdec3}, we conclude that 
\begin{align}\label{eq:Qrewritten}
Q_{\varepsilon}
	&=(2\pi)^{-d}\int_{(T\Rc)^2}\bigg(\Theta_{\varepsilon}(s_{2}-s_{1},t_{1}-s_{1},t_{2}-s_{2})^{-\frac{d}{2}}\nonumber\\
	&-G_{\varepsilon,s_{2}-s_{1}}^{(0)}(t_{1}-s_{1},t_{2}-s_{2})-\frac{d}{2}G_{\varepsilon,s_{2}-s_{1}}^{(1)}(t_{1}-s_{1},t_{2}-s_{2})\bigg)ds_{1}ds_{2}dt_{1}dt_{2}.
\end{align}
The integrand appearing in the right-hand side is positive. Indeed, if we define 
\begin{align*}
\rho_{\varepsilon}(x,u_{1},u_{2})
  &:=\mu(x,u_{1},u_{2})^2(\varepsilon+u_{1}^{2H})^{-1}(\varepsilon+u_{2}^{2H})^{-1},
\end{align*}
then,  applying relations \eqref{eq:Gdef}, \eqref{eq:Ldef} we obtain
\begin{align} 
\Theta_{\varepsilon}(x,u_{1},u_{2})^{-\frac{d}{2}}   \notag
	-&G_{\varepsilon,x}^{(0)}(u_{1},u_{2})-\frac{d}{2}G_{\varepsilon,x}^{(1)}(u_{1},u_{1})
	= 2(2\pi)^{-d}(\varepsilon+u_{1}^{2H})^{-\frac{d}{2}}(\varepsilon+u_{2}^{2H})^{-\frac{d}{2}}\\
	&\times\bigg((1-\rho_{\varepsilon}(x,u_{1},u_{2}))^{-\frac{d}{2}}-1-\frac{d}{2}\rho_{\varepsilon}(x,u_{1},u_{2})\bigg) \label{ineq:Taylorchaosgeq4}
\end{align}
and the right-hand side of the previous identity is positive by the binomial theorem. As a consequence, by changing the coordinates $(s_{1},s_{2},t_{1},t_{2})$ by $(s_{1},x:=s_{2}-s_{1},u_{1}:=t_{1}-s_{1},u_{2}:=t_{2}-s_{2})$, and integrating the variable $s_{1}$ in \eqref{eq:Qrewritten}, we get
\begin{align*}
Q_{\varepsilon}
  &\leq2(2\pi)^{-d}T\int_{[0,T]^{3}}\bigg(\Theta_{\varepsilon}(x,u_{1},u_{2})^{-\frac{d}{2}}
	-G_{\varepsilon,x}^{(0)}(u_{1},u_{2})-\frac{d}{2}G_{\varepsilon,x}^{(1)}(u_{1},u_{2})\bigg)dxdu_{1}du_{2}.
\end{align*}
In addition, by the binomial theorem, we have that for every $0<y<1$,
\begin{align*}
(1-y)^{-\frac{d}{2}}-1-\frac{d}{2}y
  &=\sum_{q=2}^{\infty}(-1)^{q}\Comb{-\frac{d}{2}\\q}y^{q}=y^{2}\sum_{q=0}^{\infty}\frac{(\frac{d}{2})^{\overline{q+2}}}{(q+2)!}y^{q},
\end{align*}
where $(x)^{\overline{q}}$ denotes the raising factorial $(x)^{\overline{q}}:=x(x+1)\dots(x+q-1)$. Hence, by \eqref{ineq:Taylorchaosgeq4}, 
\begin{align}\label{ineq:L2chaosgeq4}
Q_{\varepsilon}
	&\leq 2(2\pi)^{-d}T\int_{[0,T]^{3}}(\varepsilon+u_{1}^{2H})^{-\frac{d}{2}}(\varepsilon+u_{2}^{2H})^{-\frac{d}{2}}\nonumber\\
	&\times\rho_{\varepsilon}(x,u_{1},u_{2})^{2}\sum_{q=0}^{\infty}\frac{(\frac{d}{2})^{\overline{q+2}}}{(q+2)!}\rho_{\varepsilon}(x,u_{1},u_{2})^qdxdu_{1}du_{2}.
\end{align}
 Since 
\begin{align*}
\frac{(\frac{d}{2})^{\overline{q+2}}}{(q+2)!}
  &=\frac{(\frac{d}{2})^{\overline{q}}}{q!}\frac{(\frac{d}{2}+q)(\frac{d}{2}+q+1)}{(q+1)(q+2)}
  \leq \left(\frac{d}{2}+1\right)^2\frac{(\frac{d}{2})^{\overline{q}}}{q!},
\end{align*}
then, by \eqref{ineq:L2chaosgeq4}, 
\begin{align*}
Q_{\varepsilon}
	&\leq 2(2\pi)^{-d}T\left(\frac{d}{2}+1\right)^2\int_{[0,T]^{3}}(\varepsilon+u_{1}^{2H})^{-\frac{d}{2}}(\varepsilon+u_{2}^{2H})^{-\frac{d}{2}}\nonumber\\
	&\times\rho_{\varepsilon}(x,u_{1},u_{2})^{2}\sum_{q=0}^{\infty}\frac{(\frac{d}{2})^{\overline{q}}}{q!}\rho_{\varepsilon}(x,u_{1},u_{2})^qdxdu_{1}du_{2},
\end{align*}
which, by the binomial theorem, implies that there exists a constant $C>0$ only depending on $T$ and $d$, such that
\begin{align}\label{eq:muThetaprev}
Q_{\varepsilon}
	&\leq C\int_{[0,T]^{3}}\frac{\mu(x,u_{1},u_{2})^4}{(\varepsilon+u_{1}^{2H})^2(\varepsilon+u_{2}^{2H})^2}\Theta_{\varepsilon}(x,u_{1},u_{2})^{-\frac{d}{2}}dxdu_{1}du_{2}.
\end{align}
Hence, to prove the lemma it suffices to show that 
\begin{align}\label{eq:muTheta}
\lim_{\varepsilon\rightarrow0}\varepsilon^{d-\frac{3}{H}+2}\int_{[0,T]^{3}}\Psi_{\varepsilon}(x,u_{1},u_{2})dxdu_{1}du_{2}=0,
\end{align}
where 
\begin{align}\label{eq:psideflogtheta}
\Psi_{\varepsilon}(x,u_{1},u_{2})
  &:=\frac{\mu(x,u_{1},u_{2})^4}{(\varepsilon+u_{1}^{2H})^2(\varepsilon+u_{2}^{2H})^2}\Theta_{\varepsilon}(x,u_{1},u_{2})^{-\frac{d}{2}}.
\end{align}
In order to prove \eqref{eq:muTheta}, we proceed as follows. First we decompose the domain of integration of \eqref{eq:muTheta} as $[0,T]^{3}=\widetilde{\Sc}_{1}\cup\widetilde{\Sc}_{2}\cup\widetilde{\Sc}_{3}$, where 
\begin{align}\label{eq:Stildedef}
\widetilde{\Sc}_{1}
  &:=\{(x,u_{1},u_{2})\in[0,T]^{3}\ |\ x+u_{2}-u_{1}\geq0, u_{1}-x\geq0\},\nonumber\\
\widetilde{\Sc}_{2}
  &:=\{(x,u_{1},u_{2})\in[0,T]^{3}\ |\ u_{1}-x-u_{2}\geq0\},\nonumber\\
\widetilde{\Sc}_{3}
  &:=\{(x,u_{1},u_{2})\in[0,T]^{3}\ |\ x-u_{1}\geq0\}.
\end{align}
Then, it suffices to show that 
\begin{align}\label{conv:chaos4decomp}
\lim_{\varepsilon\rightarrow0}\varepsilon^{d-\frac{3}{H}+2}\int_{\widetilde{\Sc}_{i}}\Psi_{\varepsilon}(x,u_{1},u_{2})dxdu_{1}du_{2}
  &=0,
\end{align} 
for $i=1,2,3$.\\~\\
First prove \eqref{conv:chaos4decomp} in the cases $i=1,2$. Changing the coordinates $(x,u_{1},u_{2})$ by $(\varepsilon^{-\frac{1}{2H}}x,\varepsilon^{-\frac{1}{2H}}u_{1},\varepsilon^{-\frac{1}{2H}}u_{2})$, and using the fact that 
$\Psi_{\varepsilon}(\varepsilon^{\frac{1}{2H}}x,\varepsilon^{\frac{1}{2H}}u_{1},\varepsilon^{\frac{1}{2H}}u_{2})=\varepsilon^{-d}\Psi_{1}(x,u_{1},u_{2})$,
we get 
\begin{align*}
\varepsilon^{d-\frac{3}{H}+2}\int_{\widetilde{\Sc}_{i}}\Psi_{\varepsilon}(x,u_{1},u_{2})dxdu_{1}du_{2}
  &\leq\varepsilon^{2-\frac{3}{2H}}\int_{\Sc_{i}}\Psi_{1}(x,u_{1},u_{2})dxdu_{1}du_{2},
\end{align*}
where the sets $\Sc_{i}$ are defined by \eqref{eq:Scdef}.  Therefore, using the inequality $\mu(x,u_{1},u_{2})^{2}\leq (u_{1}u_{2})^{2H}$, we obtain 
\begin{align}\label{conv:chaos4R1p}
\varepsilon^{d-\frac{3}{H}+2}\int_{\widetilde{\Sc}_{i}}\Psi_{\varepsilon}(x,u_{1},u_{2})dxdu_{1}du_{2}
  &\leq\varepsilon^{2-\frac{3}{2H}}\int_{\Sc_{i}}\frac{\mu(x,u_{1},u_{2})^2}{(u_{1}u_{2})^{2H}}\Theta_{1}(x,u_{1},u_{2})^{-\frac{d}{2}}dxdu_{1}du_{2}.
\end{align}
The integral appearing in the right-hand side of the previous inequality is finite by Lemma \ref{lem:inegraltightness} (see equation \eqref{eq:integralexpressiontight2p} for $p=2$ and $i=1,2$).
Relation \eqref{conv:chaos4decomp} for $i=1,2$ is then obtained by taking $\varepsilon\rightarrow0$ in \eqref{conv:chaos4R1p}. 

It then remains to prove \eqref{conv:chaos4decomp} for $i=3.$ Changing the coordinates $(x,u_{1},u_{2})$ by $(a:=u_{1},b:=x-u_{1},c:=u_{2})$, we get 
\begin{align}\label{conv:chaos4R3v2}
\int_{\widetilde{\Sc}_{3}}\Psi_{\varepsilon}(x,u_{1},u_{2})dxdu_{1}du_{2}
  &\leq\int_{[0,T]^3}\Psi_{\varepsilon}(a+b,a,c)dadbdc.
\end{align}
We bound the right-hand side of the previous inequality as follows. First we write
\begin{align}\label{eq:muintegralrepR3}
\mu(a+b,a,c)
  &=\frac{1}{2}((a+b+c)^{2H}+b^{2H}-(b+c)^{2H}-(a+b)^{2H})\nonumber\\
	&=H(2H-1)ac\int_{[0,1]^2}(b+av_{1}+cv_{2})^{2H-2}dv_{1}dv_{2}.
\end{align}
Notice that if $a>c$, then $b+av_{1}+cv_{2}\geq v_{1}(b+a)\geq v_{1}(b+\frac{a}{2}+\frac{c}{2})$, and if $c>a$, then $b+av_{1}+cv_{2}\geq v_{2}(b+c)\geq v_{2}(b+\frac{a}{2}+\frac{c}{2})$. Therefore, since $H>\frac{3}{4}$, by \eqref{eq:muintegralrepR3} we deduce that there exists a constant $K>0$, such that
\begin{align}\label{eq:muboundR3}
\mu(a+b,a,c)
  &\leq Kac(a+b+c)^{2H-2}.
\end{align}
On the other hand, if $\Sigma$ denotes the covariance matrix of $(B_{a},B_{a+b+c}-B_{a+b})$, we can write
$$\Theta_{\varepsilon}(a+b,a,c)=\varepsilon^2+\varepsilon(a^{2H}+c^{2H})+|\Sigma|.$$ 
As a consequence, by part $(3)$ of Lemma \ref{lem:local_non_determinism}, we deduce that $\Theta_{\varepsilon}(a+b,a,c)\geq \varepsilon^{2}+\delta (ac)^{2H}$ for some constant $\delta\in(0,1)$. Hence, by \eqref{eq:psideflogtheta} and \eqref{eq:muboundR3}, that there exists a constant $C>0$, such that
\begin{align}\label{ineq:PsiR3v4}
\Psi_{\varepsilon}(a+b,a,c)
  &\leq C(ac)^{4-4H}(a+b+c)^{8H-8}(\varepsilon^{2}+(ac)^{2H})^{-\frac{d}{2}}.
\end{align}
Next we bound the right-hand side of \eqref{ineq:PsiR3v4} by using Young's inequality. Since $H>\frac{3}{4}$ and $Hd>\frac{3}{2}$, then 
\begin{align}\label{ineq:gammaapprox}
0<\frac{3-2H}{Hd}<\frac{3}{2Hd}<1.
\end{align}
Using the relation \eqref{ineq:gammaapprox}, as well as the fact that $\frac{3}{4}<H<1$, we deduce that there exists a constant $y>0$, such that 
\begin{align}
4H-4+4Hdy<0\label{ineq:expauxR31},\\
4H-3-4Hdy>0\label{ineq:expauxR32},\\
\frac{3-2H}{Hd}+y<1.\label{ineq:expauxR3}
\end{align}
By \eqref{ineq:expauxR3}, the constant $\gamma:=\frac{3-2H}{Hd}+y$ belongs to $(0,1)$, and hence, by Young's inequality, we have 
\begin{align}\label{ineq:PsiR3v2}
(1-\gamma)\varepsilon^{2}+\gamma(ac)^{2H}
  &\geq \varepsilon^{2(1-\gamma)}(ac)^{2H\gamma}.
\end{align}
In addition, by \eqref{ineq:expauxR31}, we have 
\begin{align}
(a+b+c)^{8H-8}
  &= (a+b+c)^{4H-4-4Hdy}(a+b+c)^{4H-4+4Hdy}\nonumber\\
	&\leq b^{4H-4-4Hdy}(a+c)^{4H-4+4Hdy}\nonumber\\
	&\leq b^{4H-4-4Hdy}(2\sqrt{ac})^{4H-4+4Hdy},\label{ineq:varepsilonbac2}
\end{align}
where the last inequality follows from the arithmetic mean-geometric mean inequality. Hence, by \eqref{ineq:PsiR3v4}, \eqref{ineq:PsiR3v2} and \eqref{ineq:varepsilonbac2}, we obtain
\begin{align}\label{ineq:PsiR3v5}
\varepsilon^{d-\frac{3}{H}+2}\int_{[0,T]^{3}}\Psi_{\varepsilon}(a+b,a,c)
  &\leq \varepsilon^{d-\frac{3}{H}+2-d(1-\gamma)}C\int_{[0,T]^3}b^{4H-4-4Hdy}(ac)^{2-2H+2Hdy-Hd\gamma}dadbdc\nonumber\\
	&= \varepsilon^{dy}C\int_{[0,T]^3}b^{4H-4-4Hdy}(ac)^{-1+Hdy}dadbdc.
\end{align}
The integral in the right-hand side is finite by \eqref{ineq:expauxR32}. Relation \eqref{conv:chaos4decomp} for $i=3$ then follows from \eqref{conv:chaos4R3v2} and \eqref{ineq:PsiR3v5}.
\end{proof}

\noindent The next result extends Lemma \ref{lem:chaos4tozero} to the case $H=\frac{3}{4}$.
\begin{Lema}\label{lem:chaos4tozero2}
Let $d\geq 3$ be fixed. Then, if $H=\frac{3}{4}$,
\begin{align}\label{eq:chaos2critical}
\lim_{\varepsilon\rightarrow0}\frac{\varepsilon^{\frac{d}{2}-1}}{\sqrt{\log(1/\varepsilon)}}\Norm{I_{T}^{\varepsilon}-\E\left[I_{T}^{\varepsilon}\right]-J_{2}(I_{T}^{\varepsilon})}_{L^{2}(\Omega)}
  &=0.
\end{align}
\end{Lema}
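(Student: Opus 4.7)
The plan is to follow the structure of the proof of Lemma~\ref{lem:chaos4tozero}, with sharper bounds at the critical exponent $H=\frac{3}{4}$. The chain of identities \eqref{eq:auxfinal}--\eqref{eq:muThetaprev} relies only on $H>\frac{1}{2}$, so it still yields the starting estimate
\begin{equation*}
Q_\varepsilon:=\Norm{I_T^\varepsilon-\E\left[I_T^\varepsilon\right]-J_2(I_T^\varepsilon)}_{L^2(\Omega)}^2\leq C\int_{[0,T]^3}\Psi_\varepsilon(x,u_1,u_2)\,dx\,du_1\,du_2,
\end{equation*}
with $\Psi_\varepsilon$ as in~\eqref{eq:psideflogtheta}. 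Splitting the integration domain via~\eqref{eq:Stildedef}, the claim~\eqref{eq:chaos2critical} reduces to showing, for each $i=1,2,3$,
\begin{equation*}
\limsup_{\varepsilon\to 0}\;\varepsilon^{d-2}\int_{\widetilde{\mathcal{S}}_i}\Psi_\varepsilon(x,u_1,u_2)\,dx\,du_1\,du_2<\infty,
\end{equation*}
since the $\log(1/\varepsilon)^{-1}$ factor appearing in~\eqref{eq:chaos2critical} will then produce the required $o(1)$.

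For $i=1,2$, I would rescale $(x,u_1,u_2)\mapsto\varepsilon^{2/3}(x,u_1,u_2)$ (note $\tfrac{1}{2H}=\tfrac{2}{3}$), use $\Psi_\varepsilon(\varepsilon^{2/3}\,\cdot)=\varepsilon^{-d}\Psi_1(\cdot)$, and reduce the statement to $\int_{\mathcal{S}_i}\Psi_1<\infty$. The coarse estimate $\mu^2\leq (u_1u_2)^{2H}$ used in Lemma~\ref{lem:chaos4tozero} does not suffice, as it would leave the divergent tail $\int_{\mathcal{S}_i}\mu^2(u_1u_2)^{-3/2}\Theta_1^{-d/2}$. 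Instead, I would retain the full weight $\mu^4/(u_1u_2)^{4H}\leq 1$ and argue by dyadic decomposition: on shells where the three coordinates are of order $U\geq 1$, the local-nondeterminism bound $\Theta_1\geq\delta(u_1u_2)^{2H}$ from Lemma~\ref{lem:local_non_determinism} yields $\Psi_1\lesssim U^{-3d/2}$ while the shell has volume $O(U^3)$, giving summable contributions $O(U^{3-3d/2})$ thanks to $d\geq 3$. Mixed regimes where only some coordinates are small are controlled by the integrable singularity $\mu\sim u_1$ of the weight near $u_1=0$.

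For $i=3$, the Young-inequality argument of Lemma~\ref{lem:chaos4tozero} breaks down at $H=\frac{3}{4}$: the constraint~\eqref{ineq:expauxR32} becomes $-2dy>0$, forcing $y<0$. I would replace it with a direct integration. Combining the mean-value representation~\eqref{eq:muintegralrepR3} with the local-nondeterminism bound $\Theta_\varepsilon(a+b,a,c)\geq\varepsilon^2+\delta(ac)^{3/2}$ yields the pointwise estimate
\begin{equation*}
\Psi_\varepsilon(a+b,a,c)\leq C\,(ac)\,(a+b+c)^{-2}\bigl(\varepsilon^2+\delta(ac)^{3/2}\bigr)^{-d/2}.
\end{equation*}
Integrating out $b\in[0,T]$ via $\int_0^T(a+b+c)^{-2}\,db\leq (a+c)^{-1}$ and then rescaling $(a,c)\mapsto\varepsilon^{2/3}(a,c)$ reduces the remaining estimate to the finiteness of
\begin{equation*}
\int_0^\infty\!\!\int_0^\infty\frac{ac\,da\,dc}{(a+c)\bigl(1+\delta(ac)^{3/2}\bigr)^{d/2}},
\end{equation*}
which is elementary for $d\geq 3$: the integrand is bounded by $\min(a,c)$ near the origin and decays like $(a+c)^{-1}(ac)^{-3d/4}$ at infinity.

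The main obstacle will be verifying that $\int_{\mathcal{S}_i}\Psi_1<\infty$ for $i=1,2$ in this critical regime. This is precisely where retaining the full $\mu^4$-weight (rather than the weaker $\mu^2$-weight used in Lemma~\ref{lem:chaos4tozero}) becomes indispensable, and where the hypothesis $d\geq 3$ enters explicitly through the summability condition $3-3d/2<0$.
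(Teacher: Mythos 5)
Your treatment of the region $\widetilde{\Sc}_3$ is sound and essentially the paper's own argument: keeping the full $\mu^4$ weight, combining $\mu(a+b,a,c)\leq C\,ac\,(a+b+c)^{-1/2}$ with part (3) of Lemma \ref{lem:local_non_determinism} to get $\Psi_{\varepsilon}(a+b,a,c)\leq C\,ac\,(a+b+c)^{-2}\left(\varepsilon^{2}+\delta(ac)^{3/2}\right)^{-d/2}$, integrating out $b$, and rescaling $(a,c)$ by $\varepsilon^{2/3}$ (which indeed cancels $\varepsilon^{d-2}$ exactly) reduces the matter to the finiteness of your two-dimensional integral, which does hold for $d\geq3$. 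The paper does the same thing via its function $K$, rescaling $b$ along with $(a,c)$ instead of integrating it out first; the substance is identical.

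There is, however, a genuine gap in your handling of $i=1,2$, on two counts. First, the premise for abandoning the reduction $\mu^{2}\leq(u_{1}u_{2})^{3/2}$ is wrong: the integral $\int_{\Sc_{i}}\mu(x,u_{1},u_{2})^{2}(u_{1}u_{2})^{-3/2}\Theta_{1}(x,u_{1},u_{2})^{-d/2}\,dx\,du_{1}\,du_{2}$ is \emph{not} divergent for $i=1,2$ — relation \eqref{eq:integralexpressiontight2p} of Lemma \ref{lem:inegraltightness} asserts precisely its finiteness for all $\frac{3}{2d}<H<1$ and $0<p<\frac{4Hd}{3}$, and at $H=\frac34$, $p=2$ this condition reads $2<d$, which is exactly where $d\geq3$ enters. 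Only the integral over $\Sc_{3}$ (relation \eqref{eq:integralexpressiontight2}) requires $H<\frac34$ and diverges logarithmically at the critical exponent; that is why the separate $\mu^4$-based argument is needed on $\Sc_3$, not on $\Sc_1,\Sc_2$. This one-line reduction plus Lemma \ref{lem:inegraltightness} is in fact the paper's proof for $i=1,2$. Second, your substitute dyadic argument rests on the inequality $\Theta_{1}\geq\delta(u_{1}u_{2})^{2H}$, attributed to Lemma \ref{lem:local_non_determinism}; but that is part (3) of the lemma, valid only for disjoint intervals, i.e.\ on $\Sc_{3}$. On $\Sc_{1}$ (overlapping intervals) and $\Sc_{2}$ (nested intervals) the lemma gives only \eqref{ineq:local_non_determinism_R1} and \eqref{ineq:local_non_determinism_R2}, whose right-hand sides vanish as the intervals merge: taking $x\to0$, $u_{1}=u_{2}=U$ gives $\Theta_{1}=1+2U^{3/2}$ while $(u_{1}u_{2})^{3/2}\sim U^{3}$, so your shell bound $\Psi_{1}\lesssim U^{-3d/2}$ fails near the diagonal, and the claimed contribution $O(U^{3-3d/2})$ with it. A finer decomposition near the diagonal could likely be made to close (the defective region has small volume), but as written the step is not established — and the simple correct route is the one you discarded.
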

\begin{proof}
For $T>0$ fixed, define the quantity 
\begin{align*}
Q_{\varepsilon}
  &:=\Norm{I_{T}^{\varepsilon}-\E\left[I_{T}^{\varepsilon}\right]-J_{2}(I_{T}^{\varepsilon})}_{L^{2}(\Omega)}^2.
\end{align*}
As in the proof of equation \eqref{eq:muThetaprev} in Lemma \ref{lem:chaos4tozero}, we can show that there exists a constant $C>0$ such that 
\begin{align}\label{ineq:Qpsi}
Q_{\varepsilon}
  &\leq C\int_{[0,T]^{3}}\Psi_{\varepsilon}(x,u_{1},u_{2})dxdu_{1}du_{2},
\end{align}
where 
\begin{align}\label{eq:psilogdef1}
\Psi_{\varepsilon}(x,u_{1},u_{2})
  &:=\frac{\mu(x,u_{1},u_{2})^4}{(\varepsilon+u_{1}^{\frac{3}{2}})^2(\varepsilon+u_{2}^{\frac{3}{2}})^2}\Theta_{\varepsilon}(x,u_{1},u_{2})^{-\frac{d}{2}}.
\end{align}
Hence, by splitting the domain of integration in \eqref{ineq:Qpsi} as $[0,T]^3=\bigcup_{i=1}^{3}\widetilde{\Sc}_{i}$, where the sets $\widetilde{\Sc}_{i}$ are defined by \eqref{eq:Stildedef}, we deduce that the relation \eqref{eq:chaos2critical} holds, provided that
\begin{align}\label{conv:chaos4decomp2}
\lim_{\varepsilon\rightarrow0}\frac{\varepsilon^{d-2}}{\log(1/\varepsilon)}\int_{\widetilde{\Sc}_{i}}\Psi_{\varepsilon}(x,u_{1},u_{2})dxdu_{1}du_{2}
  &=0,
\end{align}  
for $i=1,2,3$. To prove \eqref{conv:chaos4decomp2} for $i=1,2$, we change the coordinates $(x,u_{1},u_{2})$ by $(\varepsilon^{-\frac{2}{3}}x,\varepsilon^{-\frac{2}{3}}u_{1},\varepsilon^{-\frac{2}{3}}u_{2})$ and use the fact that $\Psi_{\varepsilon}(\varepsilon^{\frac{2}{3}}x,\varepsilon^{\frac{2}{3}} u_{1},\varepsilon^{\frac{2}{3}} u_{2})=\varepsilon^{-d}\Psi_{1}(x,u_{2},u_{2})$, in order to get 
\begin{align}\label{eq:Psilogthetaprime}
\frac{\varepsilon^{d-2}}{\log(1/\varepsilon)}\int_{\widetilde{\Sc_{i}}}\Psi_{\varepsilon}(x,u_{1},u_{2})dxdu_{1}du_{2}
  &\leq\frac{1}{\log(1/\varepsilon)}\int_{\Sc_{i}}\Psi_{1}(x,u_{1},u_{2})dxdu_{1}du_{2},
\end{align}
where the sets $\Sc_{i}$ are defined by \eqref{eq:Scdef}. As a consequence, by applying the inequality $\mu(x,u_{1},u_{2})^2\leq(u_{1}u_{2})^{\frac{3}{2}}$, we get 
\begin{align}\label{eq:Psilogtheta}
\frac{\varepsilon^{d-2}}{\log(1/\varepsilon)}\int_{\widetilde{\Sc_{i}}}\Psi_{\varepsilon}(x,u_{1},u_{2})dxdu_{1}du_{2}
  &\leq\frac{1}{\log(1/\varepsilon)}\int_{\Sc_{i}}\frac{\mu(x,u_{1},u_{2})^2}{(u_{1}u_{2})^{\frac{3}{2}}}\Theta_{1}(x,u_{1},u_{2})^{-\frac{d}{2}}dxdu_{1}du_{2}.
\end{align}
The integral appearing the right-hand side of the previous inequality is finite for $i=1,2$ by Lemma \ref{lem:inegraltightness} (see equation \eqref{eq:integralexpressiontight2p} for $p=2$). Relation \eqref{conv:chaos4decomp2} for $i=1,2$ is then obtained by taking $\varepsilon\rightarrow0$ in \eqref{eq:Psilogtheta}.

It then suffices to handle the case $i=3$.  Define the function $K(x,u_{1},u_{2})$ by 
\begin{align}\label{eq:Kdef}
K(x,u_{1},u_{2})
  &:=\frac{\mu(x,u_{1},u_{2})^4}{(u_{1}u_{2})^{3}}\Theta_{1}(x,u_{1},u_{2})^{-\frac{d}{2}}.
\end{align} 
Notice that 
\begin{align}\label{eq:PsiKrel}
\frac{1}{\log(1/\varepsilon)}\int_{\Sc_{3}}\Psi_{1}(x,u_{1},u_{2})dxdu_{1}du_{2}
  &\leq\frac{1}{\log(1/\varepsilon)}\int_{\Sc_{3}}K(x,u_{1},u_{2})dxdu_{1}du_{2}.
\end{align}
Using the representation
\begin{align*}
\mu(a+b,a,c)
  &=H(2H-1)ac\int_{[0,1]^2}(b+a\xi+c\eta)^{2H-2}d\xi d\eta,
\end{align*}
we get 
\begin{align*}
\mu(a+b,a,c)
  &\leq \frac{3ac}{8}\int_{[0,1]^2}(b\xi\eta +a\xi\eta+c\xi\eta)^{-\frac{1}{2}}d\xi d\eta= \frac{3ac}{2}(a+b+c)^{-\frac{1}{2}}.
\end{align*}
As a consequence, 
\begin{align*}
K(a+b,a,c)
  &\leq \frac{3^4}{2^4}ac(a+b+c)^{-2}\Theta_{1}(a+b,a,c)^{-\frac{d}{2}}.
\end{align*}
Notice that $\Theta_{1}(a+b,a,c)=1+a^{\frac{3}{2}}+c^{\frac{3}{2}}+|\Sigma|$, where $\Sigma$ denotes the covariance matrix of $(B_{a},B_{a+b+c}-B_{a+b})$. Therefore, by part (3) of Lemma \ref{lem:local_non_determinism}, we deduce that 
$$\Theta_{1}(a+b,a,c)\geq 1+a^{\frac{3}{2}}+c^{\frac{3}{2}}+\delta (ac)^{\frac{3}{2}},$$ 
for some constant $\delta\in(0,1)$. From here, it follows that there exists a constant $C>0$, such that
\begin{align*}
K(a+b,a,c)
  &\leq Cac(a+b+c)^{-2}\left(1+a^{\frac{3}{2}}+c^{\frac{3}{2}}+ a^{\frac{3}{2}}c^{\frac{3}{2}}\right)^{-\frac{d}{2}}.
\end{align*}
From here it follows that there exists a constant $C>0$ such that the following inequalities hold
\begin{align*}
K(a+b,a,c)
&\leq Cac^{-1}\left(1+c^{\frac{3}{2}}+ a^{\frac{3}{2}}c^{\frac{3}{2}}\right)^{-\frac{d}{2}}~~~~~~~~~~~~~\text{\ \ \ \ \ \ \ \ \ \ \ \ \ \ \ \ \ if }~~a\leq b\leq c,\\
K(a+b,a,c)
&\leq Ca^{-1}c\left(1+a^{\frac{3}{2}}+ a^{\frac{3}{2}}c^{\frac{3}{2}}\right)^{-\frac{d}{2}}~~~~~~~~~~~~~\text{\ \ \ \ \ \ \ \ \ \ \ \ \ \ \ \ \ if }~~c\leq b\leq a,\\
K(a+b,a,c)
&\leq Cacb^{-2}\left(1+(a\vee c)^{\frac{3}{2}}+ a^{\frac{3}{2}}c^{\frac{3}{2}}\right)^{-\frac{d}{2}}~~~~~~~~~~~~~\text{\ \ \ \ \ \ \ \ \ if }~~a, c\leq b,\\
K(a+b,a,c)
&\leq Cac^{-1}\left(1+c^{\frac{3}{2}}+ a^{\frac{3}{2}}c^{\frac{3}{2}}\right)^{-\frac{d}{2}}~~~~~~~~~~~~~\text{\ \ \ \ \ \ \ \ \ \ \ \ \ \ \ \ \ if }~~b\leq a\leq c,\\
K(a+b,a,c)
&\leq Ca^{-1}c\left(1+a^{\frac{3}{2}}+ a^{\frac{3}{2}}c^{\frac{3}{2}}\right)^{-\frac{d}{2}}~~~~~~~~~~~~~\text{\ \ \ \ \ \ \ \ \ \ \ \ \ \ \ \ \ if }~~b\leq c\leq a.
\end{align*}
Using the previous inequalities, as well as the condition $d\geq 3$, we can easily check that $K(a+b,a,c)$ is integrable
in $\R_{+}^{3}$, which in turn implies that $K(x,u_{1},u_{2})$ is integrable in $\Sc_{3}$. Using this observation, as well as relations \eqref{eq:Psilogthetaprime} and \eqref{eq:PsiKrel}, we obtain 
\begin{align*}
\lim_{\varepsilon\rightarrow0}\frac{\varepsilon^{d-2}}{\log(1/\varepsilon)}\int_{\widetilde{\Sc}_{3}}\Psi_{\varepsilon}(x,u_{1},u_{2})dxdu_{1}du_{2}
  &=0,
\end{align*}
as required. The proof is now complete.
\end{proof}

The next result provides a useful approximation for  $I_{2}(h_{2,T}^{\varepsilon})$.
\begin{Lema}\label{lem:finiteFGintegrallogaux}
Assume that $H=\frac{3}{4}$ and $d\geq 3$. Let $h_{2,T}^{\varepsilon}$ be defined as in \eqref{eq:kerneldef2} and consider the following approximation of $I_{2}(h_{2,T}^{\varepsilon})$
\begin{align}\label{eq:Jtildedef}
\widetilde{J}_{T}^{\varepsilon}
  &:=-\frac{(2\pi)^{-\frac{d}{2}}\varepsilon^{-\frac{d}{2}+1}}{2}\sum_{j=1}^{d}\int_{0}^{T}\int_{0}^{\infty}
\frac{u^{\frac{3}{2}}}{\varepsilon^{\frac{1}{3}}(1+u^{\frac{3}{2}})^{\frac{d}{2}+1}}	
	H_{2}\left(\frac{B_{s+\varepsilon^{\frac{2}{3}}u}^{(j)}-B_{s}^{(j)}}{\sqrt{\varepsilon}u^{\frac{3}{2}}}\right)duds.
\end{align}
Then we have that 
\begin{align*}
\lim_{\varepsilon\rightarrow0}\frac{\varepsilon^{\frac{d}{2}-1}}{\sqrt{\log(1/\varepsilon)}}\Norm{I_{2}(h_{2,T}^{\varepsilon})-\widetilde{J}_{T}^{\varepsilon}}_{L^{2}(\Omega)}
  &=0.
\end{align*}
\end{Lema}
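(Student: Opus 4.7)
The strategy is to express $\widetilde{J}_{T}^{\varepsilon}-I_{2}(h_{2,T}^{\varepsilon})$ as a single second-chaos integral supported on an explicit ``missing tail'' region, then control its $L^{2}$-norm via the chaos isometry together with a crude Cauchy-Schwarz bound on $\mu$.

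First, I would rewrite $I_{2}(h_{2,T}^{\varepsilon})$ in the same form as $\widetilde{J}_{T}^{\varepsilon}$. Starting from the chaos kernel \eqref{eq:kerneldef} with $q=1$, using $\alpha(\mathbf{i}_{2})=\delta_{i_{1},i_{2}}$ together with the identity $I_{2}^{(j)}(\mathbf{1}_{[s,t]}^{\otimes 2})=(t-s)^{3/2}H_{2}((B_{t}^{(j)}-B_{s}^{(j)})/(t-s)^{3/4})$, the change of variable $u=(t-s)\varepsilon^{-2/3}$ produces
\[
I_{2}(h_{2,T}^{\varepsilon})=-\frac{(2\pi)^{-d/2}\varepsilon^{-d/2+2/3}}{2}\sum_{j=1}^{d}\int_{0}^{T}\int_{0}^{(T-s)/\varepsilon^{2/3}}\frac{u^{3/2}}{(1+u^{3/2})^{d/2+1}}H_{2}\!\left(\frac{B_{s+\varepsilon^{2/3}u}^{(j)}-B_{s}^{(j)}}{\sqrt{\varepsilon}\,u^{3/4}}\right)du\,ds.
\]
Comparing with \eqref{eq:Jtildedef} shows that the integrands coincide, so $\widetilde{J}_{T}^{\varepsilon}-I_{2}(h_{2,T}^{\varepsilon})$ is the same integral restricted to the tail $u\ge (T-s)/\varepsilon^{2/3}$; equivalently, in the original variables, to $t\ge T$.

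Second, using the second-chaos isometry, the independence of the components $B^{(j)}$, the identity $\E[H_{2}(Z_{1})H_{2}(Z_{2})]=2(\E[Z_{1}Z_{2}])^{2}$ for jointly standard normal $Z_{1},Z_{2}$, and the self-similarity of $B$, one computes
\[
\|\widetilde{J}_{T}^{\varepsilon}-I_{2}(h_{2,T}^{\varepsilon})\|_{L^{2}(\Omega)}^{2}=\frac{(2\pi)^{-d}d}{2}\int_{[0,T]^{2}\times[T,\infty)^{2}}\frac{\mu(s_{2}-s_{1},t_{1}-s_{1},t_{2}-s_{2})^{2}}{\prod_{i=1,2}(\varepsilon+(t_{i}-s_{i})^{3/2})^{d/2+1}}\,ds_{1}ds_{2}dt_{1}dt_{2}.
\]
Since $\mu$ is the covariance of two fBm increments with variances $(t_{i}-s_{i})^{3/2}$, Cauchy-Schwarz gives the crude bound $\mu^{2}\le (t_{1}-s_{1})^{3/2}(t_{2}-s_{2})^{3/2}$, which decouples the integral into the square of the one-dimensional quantity $\int_{0}^{T}\int_{T}^{\infty}(t-s)^{3/2}(\varepsilon+(t-s)^{3/2})^{-d/2-1}dt\,ds$.

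Finally, the substitutions $a=(T-s)/\varepsilon^{2/3}$ and $u=(t-s)/\varepsilon^{2/3}$ reduce this one-dimensional integral to $\varepsilon^{4/3-d/2}\int_{0}^{T/\varepsilon^{2/3}}F(a)\,da$, where $F(a):=\int_{a}^{\infty}u^{3/2}(1+u^{3/2})^{-d/2-1}du$. By Fubini, $\int_{0}^{\infty}F(a)\,da=\int_{0}^{\infty}u^{5/2}(1+u^{3/2})^{-d/2-1}du$, whose integrand behaves like $u^{5/2}$ near $0$ and like $u^{1-3d/4}$ at infinity, so it is finite iff $d>8/3$, i.e.\ $d\ge 3$---this is the sole point where the dimension hypothesis enters. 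Combining, $\|I_{2}(h_{2,T}^{\varepsilon})-\widetilde{J}_{T}^{\varepsilon}\|_{L^{2}(\Omega)}^{2}=O(\varepsilon^{8/3-d})$, whence
\[
\frac{\varepsilon^{d/2-1}}{\sqrt{\log(1/\varepsilon)}}\|I_{2}(h_{2,T}^{\varepsilon})-\widetilde{J}_{T}^{\varepsilon}\|_{L^{2}(\Omega)}=O\!\left(\frac{\varepsilon^{1/3}}{\sqrt{\log(1/\varepsilon)}}\right)\longrightarrow 0.
\]
The main point to check is that the crude Cauchy-Schwarz estimate on $\mu$ already gives a decay rate strictly stronger than the target $\varepsilon^{2-d}\log(1/\varepsilon)$; since the tail region forces $t_{i}\ge T$, the additional gain of $\varepsilon^{2/3}$ beyond $\varepsilon^{2-d}$ is precisely what the decoupled integral of $F$ yields once $d\ge 3$ secures its integrability. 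No finer analysis of $\mu$ (via local nondeterminism or Taylor expansion) is required here, in contrast to the more delicate estimates in the proofs of Lemmas~\ref{lem:chaos4tozero} and \ref{lem:chaos4tozero2}.
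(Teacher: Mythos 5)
Your proof is correct, but it takes a genuinely different and more elementary route than the paper's. Both arguments start the same way: rewrite $I_{2}(h_{2,T}^{\varepsilon})$ via the scaling $u=(t-s)\varepsilon^{-2/3}$, identify $\widetilde{J}_{T}^{\varepsilon}-I_{2}(h_{2,T}^{\varepsilon})$ as the tail contribution $t\geq T$, and compute the $L^{2}$-norm by the second-chaos isometry, arriving at exactly the fourfold integral you display. The paths then diverge. The paper, after integrating out one time variable, bounds the joint constraint $T-(T-\varepsilon^{2/3}u_{1})_{+}\vee(T-x-\varepsilon^{2/3}u_{2})_{+}$ by the single factor $r_{\varepsilon^{2/3}}(u_{1})=T-(T-\varepsilon^{2/3}u_{1})_{+}$, i.e.\ it discards the tail constraint on the second increment; this loss creates a genuine logarithmic divergence in the disjoint-interval region $\Sc_{3}$ of \eqref{eq:Scdef} (the $x$-integral of $x^{-1}$ up to $\varepsilon^{-2/3}T$ produces a $\log(1/\varepsilon)$), which must then be beaten by the region splitting, Lemma \ref{ineq:techR}, Lemma \ref{lem:inegraltightness}, and a two-parameter truncation argument. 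You instead keep both constraints $t_{1},t_{2}\geq T$ and apply the crude bound $\mu^{2}\leq(t_{1}-s_{1})^{3/2}(t_{2}-s_{2})^{3/2}$, which decouples the fourfold integral into the square of $\int_{0}^{T}\int_{T}^{\infty}(t-s)^{3/2}(\varepsilon+(t-s)^{3/2})^{-d/2-1}dt\,ds$; your scaling and Fubini computation are correct, the integral $\int_{0}^{\infty}u^{5/2}(1+u^{3/2})^{-d/2-1}du$ converges precisely when $d>8/3$, and you obtain $\varepsilon^{d-2}\Norm{\widetilde{J}_{T}^{\varepsilon}-I_{2}(h_{2,T}^{\varepsilon})}_{L^{2}(\Omega)}^{2}=O(\varepsilon^{2/3})$, so the $\log$ normalization is not even needed. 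Your argument is shorter, avoids local nondeterminism and the $\Sc_{i}$ decomposition entirely, yields a quantitative rate, and makes transparent that $d\geq3$ enters only through the convergence of a one-dimensional integral (failing at $d=2$, consistent with the excluded case $Hd=\tfrac32$); what the paper's route buys is the reuse of machinery (Lemmas \ref{ineq:techR} and \ref{lem:inegraltightness}) already needed elsewhere for the covariance asymptotics. One cosmetic remark: the exponent $u^{\frac32}$ inside the Hermite polynomial in \eqref{eq:Jtildedef} is a typo for $u^{\frac34}$ (as in the paper's own proof), and your reading $u^{3/4}$ is the correct one.
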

\begin{proof}
Using \eqref{eq:kerneldef}, we can easily check that
\begin{align*}
I_{2}(h_{2,T}^{\varepsilon})
  &=-\frac{(2\pi)^{-\frac{d}{2}}}{2}\sum_{j=1}^{d}\int_{0}^{T}\int_{0}^{T-u}\frac{u^{\frac{3}{2}}}{(\varepsilon+u^{\frac{3}{2}})^{\frac{d}{2}+1}}H_{2}\left(\frac{B_{s+u}^{(j)}-B_{s}^{(j)}}{u^{\frac{3}{4}}}\right)dsdu.
\end{align*}
Making the change of variables $v:=\varepsilon^{-\frac{2}{3}}u$, we get 
\begin{align*}
I_{2}(h_{2,T}^{\varepsilon})
  &=-\frac{(2\pi)^{-\frac{d}{2}}\varepsilon^{-\frac{d}{2}+\frac{2}{3}}}{2}\sum_{j=1}^{d}\int_{0}^{T}\int_{0}^{\varepsilon^{-\frac{2}{3}}(T-s)} 
  \frac{v^{\frac{3}{2}}}{(1+v^{\frac{3}{2}})^{\frac{d}{2}+1}}
  H_{2}\left(\frac{B_{s+\varepsilon^{\frac{2}{3}}v}^{(j)}-B_{s}^{(j)}}{\sqrt{\varepsilon}v^{\frac{3}{4}}}\right)dvds,
\end{align*}
and hence, 
\begin{align}\label{eq:JTtildeminusJ}
\widetilde{J}_{T}^{\varepsilon}-I_{2}(h_{2,T}^{\varepsilon})
  &=-\frac{(2\pi)^{-\frac{d}{2}}\varepsilon^{-\frac{d}{2}+\frac{2}{3}}}{2}\sum_{j=1}^{d}\int_{0}^{T}\int_{\varepsilon^{-\frac{2}{3}}(T-s)}^{\infty} \frac{v^{\frac{3}{2}}}{(1+v^{\frac{3}{2}})^{\frac{d}{2}+1}}H_{2}\left(\frac{B_{s+\varepsilon^{\frac{2}{3}}u}^{(j)}-B_{s}^{(j)}}{\sqrt{\varepsilon}u^{\frac{3}{4}}}\right)duds.
\end{align}
Set
\begin{align*}
\Phi^\varepsilon 
  &=\varepsilon^{d-2}\Norm{\widetilde{J}_{T}^{\varepsilon}-I_{2}(h_{2,T}^{\varepsilon})}_{L^{2}(\Omega)}^2.
\end{align*}
Using \eqref{eq:JTtildeminusJ}, as well as the fact that 
\begin{align}\label{eq:Hermiteisometry}
\E\left[H_{2}\left(\frac{B_{s_{1}+v_{1}}-B_{s_{1}}}{v_{1}^{H}}\right)H_{2}\left(\frac{B_{s_{2}+v_{2}}-B_{s_{2}}}{v_{2}^{H}}\right)\right]
  &=2(v_{1}v_{2})^{-2H}\mu(s_{2}-s_{1},v_{1},v_{2})^2,
\end{align}
for all $s_{1},s_{2},v_{1},v_{2}\geq0$, we can easily check that
\begin{align*}
\Phi^\varepsilon
  &= \frac{d(2\pi)^{-d}}{2}\int_{[0,T]^2}\int_{\R_{+}^{2}}\Indi{[T,\infty)}(s_{1}+\varepsilon^{\frac{2}{3}}u_{1})\Indi{[T,\infty)}(s_{2}+\varepsilon^{\frac{2}{3}}u_{2})V_{\varepsilon,s_{2}-s_{1}}(u_{1},u_{2})du_{1}du_{2}ds_{1}ds_{2},
\end{align*}
where 
\begin{align*}
V_{\varepsilon,x}(u_{1},u_{2})
  &:=		\varepsilon^{-\frac{8}{3}}\psi(u_{1},u_{2})\mu(x,\varepsilon^{\frac{2}{3}}u_{1},\varepsilon^{\frac{2}{3}}u_{2})^{2},
\end{align*}
and  
\begin{align} \label{psi}
\psi(u_{1},u_{2})
  &:= (1+u_{1}^{\frac{3}{2}})^{-\frac{d}{2}-1}(1+u_{2}^{\frac{3}{2}})^{-\frac{d}{2}-1}.
\end{align}
Hence, using the fact that $\mu(x,v_{1},v_{2})=\mu(-x,v_{2},v_{1})$, we can write
\begin{align}\label{eq:Vcovdisjoint2p}
\Phi^\varepsilon
  &= d(2\pi)^{-d}\int_{0}^{T}\int_{0}^{s_{2}}\int_{\R_{+}^{2}}\Indi{[T,\infty)}(s_{1}+\varepsilon^{\frac{2}{3}}u_{1})\Indi{[T,\infty)}(s_{2}+\varepsilon^{\frac{2}{3}}u_{2})V_{\varepsilon,s_{2}-s_{1}}(u_{1},u_{2})du_{1}du_{2}ds_{1}ds_{2}.
\end{align}
Changing the coordinates $(s_{1},s_{2},u_{1},u_{2})$ by $(s:=s_{1},x:=s_{2}-s_{1},u_{1},u_{2})$ in the expression \eqref{eq:Vcovdisjoint2p}, and then integrating the variable $s$, we obtain
\begin{align*}
\Abs{ \Phi^\varepsilon}
   &=  d(2\pi)^{-d}\int_{0}^{T}\int_{\R_{+}^{2}}(T-(T-\varepsilon^{\frac{2}{3}}u_{1})_{+}\vee(T-x-\varepsilon^{\frac{2}{3}}u_{2})_{+})V_{\varepsilon,x}(u_{1},u_{2})du_{1}du_{2}dx,
\end{align*}
and consequently, there exists a constant $C>0$ such that 
\begin{align}\label{eq:Vcovdisjointp}
\Abs{ \Phi^\varepsilon}
   \leq  C \int_{0}^{T}\int_{\R_{+}^{2}}r_{\varepsilon^{\frac{2}{3}}}(u_{1})V_{\varepsilon,x}(u_{1},u_{2})du_{1}du_{2}dx,
\end{align}
where $r_{\delta}(u_{1}):=T-(T-\delta u_{1})_{+}$. Making the change of variable $v:=\varepsilon^{-\frac{2}{3}}x$ in \eqref{eq:Vcovdisjointp} and using the fact that  $V_{\varepsilon,\varepsilon^{\frac{2}{3}}v}(u_{1},u_{2})=\varepsilon^{-\frac{2}{3}}G_{1,v}^{(1)}(u_{1},u_{2})$, we get  
\begin{align*}
\Abs{ \Phi^\varepsilon}
  &\leq C \int_{0}^{\varepsilon^{-\frac{2}{3}}T}\int_{\R_{+}^2}r_{\varepsilon^{\frac{2}{3}}}(u_{1})G_{1,v}^{(1)}(u_{1},u_{2})du_{1}du_{2}dv.
\end{align*}
Therefore, defining $N:=\varepsilon^{-\frac{2}{3}}$, so that $\log(1/\varepsilon)=\frac{3\log N}{2}$, we obtain 
\begin{align*}
\frac{\Abs{ \Phi^\varepsilon}}{\log(1/\varepsilon)}
  &\leq\frac{2C}{3\log N}\int_{0}^{NT}\int_{\R_{+}^2}r_{\frac{1}{N}}(u_{1})G_{1,x}^{(1)}(u_{1},u_{2})du_{1}du_{2}dx.
\end{align*}
To bound the right-hand side of the previous relation we split the domain of integration as follows. Define the sets $\Sc_{i}$, for $i=1,2,3$, by \eqref{eq:Scdef}. Then
\begin{align}\label{eq:epstoNlogp}
\limsup_{\varepsilon\rightarrow0}\frac{\Abs{ \Phi^\varepsilon}}{\log(1/\varepsilon)}
  &\leq\limsup_{N\rightarrow\infty}\frac{2C }{3\log N}\int_{0}^{NT}\int_{\R_{+}^2}r_{\frac{1}{N}}(x,u_{1})G_{1,x}^{(1)}(u_{1},u_{2})du_{1}du_{2}dx\\
	&\leq\frac{2C}{3}\sum_{i=1}^3\limsup_{N\rightarrow\infty}\frac{1}{\log N}\int_{0}^{NT}\int_{\R_{+}^2}\Indi{\Sc_{i}}(x,u_{1},u_{2})r_{\frac{1}{N}}(u_{1})G_{1,x}^{(1)}(u_{1},u_{2})du_{1}du_{2}dx\nonumber.
\end{align}
By relations \eqref{eq:Fidentitychaos} and \eqref{ineq:FTheta}, there exists a constant $C>0$, such that
\begin{align}\label{eq:G1Theta}
G_{1,x}^{(1)}(u_{1},u_{2})\leq C \frac { \mu^2(x,u_1,u_2)}{(u_1u_2)^{\frac 32}}\Theta_{1}(x,u_{1},u_{2})^{-\frac d2}.
\end{align}
Hence, by Lemma \ref{lem:inegraltightness}, the terms with $i=1$ and $i=2$ in the sum in the right-hand side of \eqref{eq:epstoNlogp} converge to zero. From this observation, we conclude that there exists a constant $C>0$, such that 
\begin{align}\label{eq:Glogdisjoint}
\limsup_{\varepsilon\rightarrow0}\frac{\Abs{ \Phi^\varepsilon}}{\log(1/\varepsilon)}
	\leq \limsup_{N\rightarrow\infty}\frac{C}{\log N}\int_{0}^{NT}\int_{\R_{+}^2}\Indi{\Sc_{3}}(x,u_{1},u_{2})r_{\frac{1}{N}}(u_{1})G_{1,x}^{(1)}(u_{1},u_{2})du_{1}du_{2}dx.
\end{align}
Using Lemma \ref{ineq:techR}, we can easily show that there exists a constant $C>0$, such for every $(x,u_{1},u_{2})\in\Sc_{3}$, the following inequality holds
\begin{align}\label{eq:Glogdisjoint2p}
G_{1,x}^{(1)}(u_{1},u_{2})
  &=\psi(u_1,u_2)\mu(x,u_{1},u_{2})^2\leq C\psi(u_1,u_2)(x+u_{1}+u_{2})^{-1}(u_{1}u_{2})^2,
\end{align}
where $ \psi(u_1, u_2)$ is defined in  (\ref{psi}).
From \eqref{eq:Glogdisjoint} and \eqref{eq:Glogdisjoint2p}, it follows that 
\begin{align*}
\limsup_{\varepsilon\rightarrow0}\frac{\Abs{ \Phi^\varepsilon}}{\log(1/\varepsilon)}
	&\leq \limsup_{N\rightarrow\infty}\frac{C}{\log N}\int_{0}^{NT}\int_{\R_{+}^2}r_{\frac{1}{N}}(u_{1})(x+u_{1}+u_{2})^{-1}(u_{1}u_{2})^2\psi(u_1,u_2)du_{1}du_{2}dx.
\end{align*}
In addition, we have  that
\begin{multline*}
\limsup_{N\rightarrow\infty}\frac{1}{\log N}\int_{0}^{1}\int_{\R_{+}^2}r_{\frac{1}{N}}(u_{1})(x+u_{1}+u_{2})^{-1}(u_{1}u_{2})^2\psi(u_1,u_2)du_{1}du_{2}dx\\
\leq\limsup_{N\rightarrow\infty}\frac{T}{\log N}\int_{0}^{1}\int_{\R_{+}^2}(u_{1}+u_{2})^{-1}(u_{1}u_{2})^2\psi(u_1,u_2)du_{1}du_{2}=0,
\end{multline*}
and consequently, 
\begin{align*}
\limsup_{\varepsilon\rightarrow0}\frac{\Abs{ \Phi^\varepsilon}}{\log(1/\varepsilon)}
	&\leq \limsup_{N\rightarrow\infty}\frac{C}{\log N}\int_{1}^{NT}\int_{\R_{+}^2}r_{\frac{1}{N}}(u_{1})x^{-1}(u_{1}u_{2})^2\psi(u_1,u_2)du_{1}du_{2}dx.
\end{align*}
For $\delta>0$ fixed, let $M>1$ be such that 
\begin{align}\label{ineq:reminderdelta}
\int_{M}^{\infty}\int_{0}^{\infty}(u_{1}u_{2})^2\psi(u_1,u_2)du_{2}du_{1}<\delta.
\end{align}
Using \eqref{ineq:reminderdelta}, as well as the fact that $r_{\frac{1}{N}}(u)$ is increasing on $u$, we obtain
\begin{align*}
\frac{1}{\log N}\int_{1}^{NT}\int_{M}^{\infty}\int_{0}^{\infty}x^{-1}(u_{1}u_{2})^2\psi(u_1,u_2)du_{1}du_{2}dx\leq\delta\left(1+\frac{\log(T)}{\log N}\right),
\end{align*}
and 
\begin{multline*}
\limsup_{N\rightarrow\infty}\frac{1}{\log N}\int_{1}^{NT}\int_{0}^{M}\int_{0}^{\infty}r_{\frac{1}{N}}(u_{1})x^{-1}(u_{1}u_{2})^2\psi(u_1,u_2)du_{1}du_{2}dx\\
\leq\limsup_{N\rightarrow\infty}\left(1+\frac{\log(T)}{\log N}\right)\int_{\R_{+}^2}r_{\frac{1}{N}}(M)(u_{1}u_{2})^2\psi(u_1,u_2)du_{1}du_{2}=0.
\end{multline*}
As a consequence, 
\begin{align*}
\limsup_{\varepsilon\rightarrow0}\frac{\Abs{ \Phi^\varepsilon}}{\log(1/\varepsilon)}\leq C\delta.
\end{align*}
Hence, taking $\delta\rightarrow0$, we get 
\begin{align*}
\lim_{\varepsilon\rightarrow0}\frac{\Phi^\varepsilon}{\log(1/\varepsilon)}
	&=0,
\end{align*}
as required.
\end{proof}

Finally, we describe the behavior of the covariance function of $I_{2}(h_{2,T}^{\varepsilon})$ for the case $H=\frac{3}{4}$.
\begin{Teorema}\label{teo:covchaoslog}
Let $T_{1},T_{2}\geq0$ be fixed. Then, if $d\geq3 $ and $H=\frac{3}{4}$,
\begin{align*}
\lim_{\varepsilon\rightarrow0}\frac{\varepsilon^{d-2}}{\log(1/\varepsilon)}\E\left[I_{2}(h_{2,T_{1}}^{\varepsilon})I_{2}(h_{2,T_{2}}^{\varepsilon})\right]
  &=\rho^{2}(T_{1}\wedge T_{2}),
\end{align*}
where $\rho$ is a finite constant defined by
\begin{align}\label{eq:rhodeflog}
\rho
  &:=\frac{\sqrt{3d}}{2^{\frac{d+5}{2}}\pi^{\frac{d}{2}}}\int_{0}^{\infty}(1+u^{\frac{3}{2}})^{-\frac{d}{2}-1}u^{2}du.
\end{align}
\end{Teorema}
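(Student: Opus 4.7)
My plan is to leverage the approximation $\widetilde{J}_T^\varepsilon$ of $I_2(h_{2,T}^\varepsilon)$ supplied by Lemma~\ref{lem:finiteFGintegrallogaux}. By that lemma together with the Cauchy--Schwarz inequality,
$$
\frac{\varepsilon^{d-2}}{\log(1/\varepsilon)}\bigl|\E[I_2(h_{2,T_1}^\varepsilon)I_2(h_{2,T_2}^\varepsilon)] - \E[\widetilde{J}_{T_1}^\varepsilon \widetilde{J}_{T_2}^\varepsilon]\bigr| \longrightarrow 0,
$$
provided the norms $\|\widetilde{J}_{T_i}^\varepsilon\|_{L^2}^2$ are of order $\varepsilon^{2-d}\log(1/\varepsilon)$, which will emerge as a byproduct of the computation below. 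Thus it suffices to analyze $\E[\widetilde{J}_{T_1}^\varepsilon \widetilde{J}_{T_2}^\varepsilon]$.

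Applying the Hermite product formula \eqref{eq:Hermiteisometry} to \eqref{eq:Jtildedef} (cross terms with $j\neq j'$ vanish by independence of the components $B^{(j)}$) and invoking the self-similarity identity $\mu(s_2-s_1,\varepsilon^{2/3}u_1,\varepsilon^{2/3}u_2) = \varepsilon\,\mu((s_2-s_1)/\varepsilon^{2/3},u_1,u_2)$, which holds since $2H = 3/2$, and substituting $v = (s_2-s_1)/\varepsilon^{2/3}$, one arrives at
$$
\frac{\varepsilon^{d-2}}{\log(1/\varepsilon)}\E[\widetilde{J}_{T_1}^\varepsilon \widetilde{J}_{T_2}^\varepsilon]
= \frac{d(2\pi)^{-d}}{2\log(1/\varepsilon)}\int_0^{T_1}\int_{-s_1 \varepsilon^{-2/3}}^{(T_2-s_1)\varepsilon^{-2/3}}\Phi(v)\,dv\,ds_1,
$$
where $\Phi(v) := \int_{\R_+^2}\mu(v,u_1,u_2)^2\,\psi(u_1,u_2)\,du_1du_2$ and $\psi$ is the kernel in \eqref{psi}.

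The logarithm is produced by the slow decay of $\Phi$ at infinity. Using the representation $\mu(v,u_1,u_2) = H(2H-1)\int_0^{u_1}\int_0^{u_2}(v+\eta-\xi)^{2H-2}d\eta d\xi$ for $v > u_1$ (and the symmetry $\mu(v,u_1,u_2)^2 = \mu(-v,u_2,u_1)^2$ for the negative branch), one obtains $\mu(v,u_1,u_2) \sim \tfrac{3}{8}u_1 u_2 |v|^{-1/2}$ as $|v|\rightarrow\infty$, so
$$
\Phi(v) \sim \tfrac{9}{64}\,A^2\,|v|^{-1}, \qquad A := \int_0^\infty u^2(1+u^{3/2})^{-d/2-1}du,
$$
with $A<\infty$ precisely because $d\geq 3$. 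Splitting $\Phi$ into its principal tail $\tfrac{9}{64}A^2|v|^{-1}\Indi{\{|v|\geq 1\}}$ plus an integrable remainder $R(v)$ yields, uniformly for $s_1 \in [0,T_1]$ (assuming WLOG $T_1 \leq T_2$),
$$
\int_{-s_1 \varepsilon^{-2/3}}^{(T_2-s_1)\varepsilon^{-2/3}}\Phi(v)\,dv = \tfrac{9}{64}A^2\bigl(\log(s_1\varepsilon^{-2/3})+\log((T_2-s_1)\varepsilon^{-2/3})\bigr)+O(1) = \tfrac{3}{16}A^2\log(1/\varepsilon)+O(1),
$$
with the endpoint regions $s_1\downarrow 0$ and $s_1\uparrow T_1$ contributing only $O(1)$ after integration in $s_1$. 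Dividing by $\log(1/\varepsilon)$, integrating over $s_1\in[0,T_1]$, and passing to the limit produces $\frac{d(2\pi)^{-d}}{2}\cdot\tfrac{3}{16}A^2\cdot T_1 = \frac{3dA^2}{2^{d+5}\pi^d}(T_1\wedge T_2) = \rho^2(T_1\wedge T_2)$, matching \eqref{eq:rhodeflog}.

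The main technical obstacle is establishing that the remainder $R(v) := \Phi(v) - \tfrac{9}{64}A^2|v|^{-1}\Indi{\{|v|\geq 1\}}$ lies in $L^1(\R)$ with a bound independent of $\varepsilon$, and that the log extraction is uniform in $s_1$. This rests on sharp control of $\mu(v,u_1,u_2)^2$ on each of the regions $\Sc_1,\Sc_2,\Sc_3$ of \eqref{eq:Scdef}, parallel to the case analysis already deployed in Lemma~\ref{lem:chaos4tozero2}: local nondeterminism (Lemma~\ref{lem:local_non_determinism}) controls $\Phi$ on compacts and away from the diagonal, while the pointwise bound of Lemma~\ref{ineq:techR} on $\Sc_3$ handles the large-$v$ regime. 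Once that integrability is in place, the identification of the constant reduces to the elementary algebraic check $\frac{d(2\pi)^{-d}}{2}\cdot\tfrac{3}{16}A^2 = \rho^2$ directly from \eqref{eq:rhodeflog}.
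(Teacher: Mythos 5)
Your proposal is correct, and all the constants check out ($\tfrac{9}{64}\cdot\tfrac{4}{3}=\tfrac{3}{16}$ and $\tfrac{d(2\pi)^{-d}}{2}\cdot\tfrac{3}{16}A^{2}=\tfrac{3d}{2^{d+5}\pi^{d}}A^{2}=\rho^{2}$), but the organization differs from the paper's in a meaningful way. Both arguments begin identically: reduce to $\widetilde{J}_{T}^{\varepsilon}$ via Lemma \ref{lem:finiteFGintegrallogaux}, apply the Hermite isometry \eqref{eq:Hermiteisometry}, and use the scaling $\mu(x,\varepsilon^{2/3}u_{1},\varepsilon^{2/3}u_{2})=\varepsilon\,\mu(\varepsilon^{-2/3}x,u_{1},u_{2})$. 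From there the paper polarizes: it proves separately that increments over disjoint time intervals decorrelate (\eqref{eq:covfddchaosdisjointlog}) and that $\E[(\widetilde{J}_{b}^{\varepsilon}-\widetilde{J}_{a}^{\varepsilon})^{2}]$, normalized, tends to $\rho^{2}(b-a)$ (\eqref{eq:varfddchaoslog}), in each case splitting over $\Sc_{1},\Sc_{2},\Sc_{3}$, killing $\Sc_{1},\Sc_{2}$ by Lemma \ref{lem:inegraltightness}, and extracting the logarithm on $\Sc_{3}$ by L'H\^opital's rule plus dominated convergence with $\lim_{x\to\infty}x\mu(x,u_{1},u_{2})^{2}=\tfrac{9}{64}u_{1}^{2}u_{2}^{2}$. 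You instead compute $\E[\widetilde{J}_{T_{1}}^{\varepsilon}\widetilde{J}_{T_{2}}^{\varepsilon}]$ directly through the profile $\Phi(v)=\int_{\R_{+}^{2}}\mu(v,u_{1},u_{2})^{2}\psi(u_{1},u_{2})\,du_{1}du_{2}$ and its tail $\Phi(v)\sim\tfrac{9}{64}A^{2}|v|^{-1}$; this buys you the covariance $T_{1}\wedge T_{2}$ in one stroke, with no separate disjoint-increment lemma. What the paper's L'H\^opital device buys, conversely, is that it needs only the limit $v\Phi(v)\to\tfrac{9}{64}A^{2}$ and no rate, whereas your decomposition $\Phi=\tfrac{9}{64}A^{2}|v|^{-1}\Indi{\{|v|\geq1\}}+R$ with $R\in L^{1}(\R)$ requires a quantitative rate of convergence. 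That is the one thin spot in your sketch, and it is worth recording that it does go through for all $d\geq3$: splitting the $u$-integral at $u_{i}\leq v/2$, the bound $|(1+t)^{-1/2}-1|\leq C|t|$ for $|t|\leq\tfrac12$ applied inside \eqref{mudecomplog} gives an error controlled by $v^{-2}\int_{0}^{v}u^{3}(1+u^{3/2})^{-\frac{d}{2}-1}du\lesssim v^{-7/4}$ when $d=3$ (the moment grows like $v^{1/4}$ rather than staying bounded, which is exactly why the naive uniform-in-$v$ bound fails), and the regions $u_{1}\vee u_{2}>v/2$ contribute $O(v^{-7/4})$ as well by Cauchy--Schwarz and Lemma \ref{ineq:techR}; so $|R(v)|\lesssim |v|^{-7/4}$ and the remainder is indeed integrable. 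Alternatively, you could sidestep this entirely by replacing the principal-part/remainder splitting with the elementary observation that $v\Phi(v)\to c$ implies $\frac{1}{\log N}\int_{1}^{N}\Phi(v)\,dv\to c$, which is precisely the paper's route.
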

\begin{proof}
Consider the approximation $\widetilde{J}_{T}^{\varepsilon}$ of $I_{2}(h_{2,T}^{\varepsilon})$, introduced in  \eqref{eq:Jtildedef}.  By Lemma \ref{lem:finiteFGintegrallogaux}, 
$$\lim_{\varepsilon\rightarrow0}\frac{\varepsilon^{d-2}}{\log(1/\varepsilon)}\Norm{\widetilde{J}_{T}^{\varepsilon}-I_{2}(h_{2,T}^{\varepsilon})}_{L^{2}(\Omega)}^2\rightarrow0.$$ 
Therefore, it suffices to show that 
\begin{align}\label{eq:limJtildeaux}
\lim_{\varepsilon\rightarrow0}\frac{\varepsilon^{d-2}}{\log(1/\varepsilon)}\E\left[\widetilde{J}_{T_{1}}^{\varepsilon}\widetilde{J}_{T_{2}}^{\varepsilon}\right]
  &=\rho^{2}(T_{1}\wedge T_{2}).
\end{align}
As in Lemma \ref{section:variances}, to prove \eqref{eq:limJtildeaux}, it suffices to show that for each $a<b<\alpha<\beta$, 
\begin{align}\label{eq:covfddchaosdisjointlog}
\lim_{\varepsilon\rightarrow0}\frac{\varepsilon^{d-2}}{\log(1/\varepsilon)}\E\left[(\widetilde{J}_{b}^{\varepsilon}-\widetilde{J}_{a}^{\varepsilon})(\widetilde{J}_{\beta}^{\varepsilon}-\widetilde{J}_{\alpha}^{\varepsilon})\right]=0,
\end{align}
and
\begin{align}\label{eq:varfddchaoslog}
\lim_{\varepsilon\rightarrow0}\frac{\varepsilon^{d-2}}{\log(1/\varepsilon)}\E\left[(\widetilde{J}_{b}^{\varepsilon}-\widetilde{J}_{a}^{\varepsilon})^2\right]= \rho^{2}(b-a).
\end{align}
First we prove \eqref{eq:covfddchaosdisjointlog}. Set
\begin{align*}
\Phi^\varepsilon 
  &=\varepsilon^{d-2}\E\left[(\widetilde{J}_{b}^{\varepsilon}-\widetilde{J}_{a}^{\varepsilon})(\widetilde{J}_{\beta}^{\varepsilon}-\widetilde{J}_{\alpha}^{\varepsilon})\right].
\end{align*}
Using \eqref{eq:Hermiteisometry}\ and \eqref{eq:Jtildedef}, we can easily check that
\begin{align}\label{eq:Vcovdisjoint2prime}
\Phi^\varepsilon
  &= \frac{d(2\pi)^{-d}}{2}\int_{\alpha}^{\beta}\int_{a}^{b}\int_{\R_{+}^{2}}V_{\varepsilon,s_{2}-s_{1}}(u_{1},u_{2})du_{1}du_{2}ds_{1}ds_{2},
\end{align}
where 
\begin{align*}
V_{\varepsilon,x}(u_{1},u_{2})
	&:=\varepsilon^{-\frac{8}{3}}\psi(u_1,u_2)\mu(x,\varepsilon^{\frac{2}{3}}u_{1},\varepsilon^{\frac{2}{3}}u_{2})^{2},
\end{align*}
and $\psi(u_1,u_2)$ is defined by \eqref{eq:Kdef}. 
Changing the coordinates $(s_{1},s_{2},u_{1},u_{2})$ by $(s:=s_{1},x:=s_{2}-s_{1},u_{1},u_{2})$ in \eqref{eq:Vcovdisjoint2prime},
and then integrating the variable $s$, we can show that 
\begin{align}\label{eq:Vcovdisjoint}
\Abs{ \Phi^\varepsilon}
   \leq  d(2\pi)^{-d}\beta \int_{\gamma}^{\beta}\int_{\R_{+}^{2}}V_{\varepsilon,x}(u_{1},u_{2})du_{1}du_{2}dx,
\end{align}
where the constant $\gamma$ is defined by $\gamma:=\alpha-b$. 
Making the change of variable $v:=\varepsilon^{-\frac{2}{3}}x$ and using the fact that  
$$V_{\varepsilon,\varepsilon^{\frac{2}{3}}v}(u_{1},u_{2})=\varepsilon^{-\frac{2}{3}}G_{1,v}^{(1)}(u_{1},u_{2}),$$ 
we get  
\begin{align*}
\Abs{\Phi^{\varepsilon}}
  &\leq d(2\pi)^{-d}\beta\int_{\varepsilon^{-\frac{2}{3}}\gamma}^{\varepsilon^{-\frac{2}{3}}\beta}\int_{\R_{+}^2}G_{1,v}^{(1)}(u_{1},u_{2})\text{d}u_{1}\text{d}u_{2}\text{d}v.
\end{align*}
Therefore, defining $N:=\varepsilon^{-\frac{2}{3}}$, so that $\log(1/\varepsilon)=\frac{3\log N}{2}$, we obtain 
\begin{align*}
\frac{\Abs{\Phi^{\varepsilon}}}{\log(1/\varepsilon)}
  &\leq\frac{2d(2\pi)^{-d}\beta}{3\log N}\int_{N\gamma}^{N\beta}\int_{\R_{+}^2}G_{1,x}^{(1)}(u_{1},u_{2})\text{d}u_{1}\text{d}u_{2}\text{d}x.
\end{align*}
To bound the right-hand side of the previous relation we split the domain of integration as follows. Define the sets $\Sc_{i}$, for $i=1,2,3$, by \eqref{eq:Scdef}. Then, there exists a constant $C>0$, such that
\begin{align}\label{eq:epstoNlog}
\limsup_{\varepsilon\rightarrow0}\frac{\Abs{\Phi^{\varepsilon}}}{\log(1/\varepsilon)}	
  &\leq\limsup_{N\rightarrow\infty}\frac{C}{\log N}\int_{N\gamma}^{N\beta}\int_{\R_{+}^2}G_{1,x}^{(1)}(u_{1},u_{2})\text{d}u_{1}\text{d}u_{2}\text{d}x\nonumber\\
	&\leq\sum_{i=1}^3\limsup_{N\rightarrow\infty}\frac{C}{\log N}\int_{N\gamma}^{N\beta}\int_{\R_{+}^2}\Indi{\Sc_{i}}(x,u_{1},u_{2})G_{1,x}^{(1)}(u_{1},u_{2})du_{1}\text{d}u_{2}dx.
\end{align}
Taking into account (\ref{eq:G1Theta}), by Lemma \ref{lem:inegraltightness}, the terms with $i=1$ and $i=2$ in the sum in the right-hand side of \eqref{eq:epstoNlog} converge to zero. From this observation, we conclude that
\begin{align}\label{eq:Glogdisjoint1}
\limsup_{\varepsilon\rightarrow0}\frac{\Abs{\Phi^{\varepsilon}}}{\log(1/\varepsilon)}
	\leq\limsup_{N\rightarrow\infty}\frac{C}{\log N}\int_{N\gamma}^{N\beta}\int_{\R_{+}^2}\Indi{\Sc_{3}}(x,u_{1},u_{2})G_{1,x}^{(1)}(u_{1},u_{2})\text{d}u_{1}du_{2}dx.
\end{align}
By Lemma \ref{ineq:techR}, there exists a constant $C>0$, such for every $(x,u_{1},u_{2})\in\Sc_{3}$, the following inequality holds
\begin{align}\label{eq:Glogdisjoint2}
G_{1,x}^{(1)}(u_{1},u_{2})
  =\psi(u_1,u_2)\mu(x,u_{1},u_{2})^2\leq C\psi(u_1,u_2)x^{-1}(u_{1}u_{2})^2.
\end{align}
From \eqref{eq:Glogdisjoint1} and \eqref{eq:Glogdisjoint2}, we obtain 
\begin{align*}
\limsup_{\varepsilon\rightarrow0}\frac{\Abs{\Phi^{\varepsilon}}}{\log(1/\varepsilon)}
	\leq C\limsup_{N\rightarrow\infty}\frac{\log(N\beta)-\log(N\gamma)}{\log N}\int_{\R_{+}^2}\psi(u_1,u_2)(u_{1}u_{2})^2du_{1}du_{2},
\end{align*}
for some constant $C>0$. The function $(1+u^{\frac{3}{2}})^{-\frac{d}{2}-1}u^2$ is integrable for $u$ in $\R_{+}$ due to the condition $d\geq 3$, and hence, from the previous inequality we conclude that
\begin{align}\label{eq:covlogconclusion}
\limsup_{\varepsilon\rightarrow0}\frac{\Abs{\Phi^{\varepsilon}}}{\log(1/\varepsilon)}=0.
\end{align}
Relation \eqref{eq:covfddchaosdisjointlog} then follows from \eqref{eq:covlogconclusion}.\\

Next we prove \eqref{eq:varfddchaoslog}. By taking $\alpha=a$ and $\beta=b$ in relation \eqref{eq:Vcovdisjoint2prime}, we obtain
\begin{align*}
\varepsilon^{d-2}\E\left[(\widetilde{J}_{b}^{\varepsilon}-\widetilde{J}_{a}^{\varepsilon})^2\right]
	&=d(2\pi)^{-d}\int_{a}^{b}\int_{a}^{s_{2}}\int_{\R_{+}^{2}}V_{\varepsilon,s_{2}-s_{1}}(u_{1},u_{2})du_{1}du_{2}ds_{1}ds_{2}.
\end{align*}
Changing the coordinates $(s_{1},s_{2},t_{1},t_{2})$ by $(s_{1},x:=\varepsilon^{-\frac{2}{3}}(s_{2}-s_{1}),u_{1}:=t_{1}-s_{1},u_{2}:=t_{2}-s_{2})$, integrating the variable $s_{1}$ and using the fact that $V_{\varepsilon,\varepsilon^{\frac{2}{3}}\hat{x}}(u_{1},u_{2})=\varepsilon^{-\frac{2}{3}}G_{1,\hat{x}}^{(1)}(u_{1},u_{2})$, we deduce that
\begin{align*}
\varepsilon^{d-2}\E\left[(\widetilde{J}_{b}^{\varepsilon}-\widetilde{J}_{a}^{\varepsilon})^2\right]
  &=d(2\pi)^{-d}\int_{0}^{\varepsilon^{-\frac{2}{3}}(b-a)}\int_{\R_{+}^{2}}(b-\varepsilon^{\frac{2}{3}}x-a)\varepsilon^{\frac{2}{3}}V_{\varepsilon,\varepsilon^{\frac{2}{3}}x}(u_{1},u_{2})du_{1}du_{2}dx\\
	&=d(2\pi)^{-d}\int_{0}^{\varepsilon^{-\frac{2}{3}}(b-a)}\int_{\R_{+}^{2}}(b-\varepsilon^{\frac{2}{3}}x-a)G_{1,x}^{(1)}(u_{1},u_{2})du_{1}du_{2}dx.
\end{align*}
Therefore, defining $N:=\varepsilon^{-\frac{2}{3}}$, so that $\log(1/\varepsilon)=\frac{3\log N}{2}$, we obtain 
\begin{multline}\label{eq:InckerG11prime}
\frac{\varepsilon^{d-2}}{\log(1/\varepsilon)}\E\left[(\widetilde{J}_{b}^{\varepsilon}-\widetilde{J}_{a}^{\varepsilon})^2\right]\\
\begin{aligned}
	&=\frac{2d(2\pi)^{-d}}{3\log N}\int_{0}^{N(b-a)}\int_{\R_{+}^{2}}(b-\frac{x}{N}-a)G_{1,x}^{(1)}(u_{1},u_{2})du_{1}du_{2}dx\\
	&=\frac{2d(2\pi)^{-d}}{3\log N}\sum_{i=1}^{3}\int_{0}^{N(b-a)}\int_{\R_{+}^{2}}(b-\frac{x}{N}-a)\Indi{\Sc_{i}}(x,u_{1},u_{2})G_{1,x}^{(1)}(u_{1},u_{2})du_{1}du_{2}dx.
\end{aligned}
\end{multline}
By inequality \eqref{eq:G1Theta} and Lemma \ref{lem:inegraltightness}, the terms with $i=1$ and $i=2$ in the sum in the right-hand side of \eqref{eq:InckerG11prime} converge to zero. From this observation, it follows that 
\begin{multline}\label{eq:InckerG11}
\lim_{\varepsilon\rightarrow0}\frac{\varepsilon^{d-2}}{\log(1/\varepsilon)}\E\left[(\widetilde{J}_{b}^{\varepsilon}-\widetilde{J}_{a}^{\varepsilon})^2\right]\\
\begin{aligned}
	&=\lim_{N\rightarrow\infty}\frac{2d(2\pi)^{-d}}{3\log N}\int_{0}^{N(b-a)}\int_{\R_{+}^{2}}(b-\frac{x}{N}-a)\Indi{\Sc_{3}}(x,u_{1},u_{2})G_{1,x}^{(1)}(u_{1},u_{2})du_{1}du_{2}dx\\
	&=\lim_{N\rightarrow\infty}\frac{2d(2\pi)^{-d}}{3\log N}\int_{0}^{N(b-a)}\int_{\R_{+}^{2}}(b-a)\Indi{\Sc_{3}}(x,u_{1},u_{2})G_{1,x}^{(1)}(u_{1},u_{2})du_{1}du_{2}dx\\
	&-\lim_{N\rightarrow\infty}\frac{2d(2\pi)^{-d}}{3N\log N}\int_{0}^{N(b-a)}\int_{\R_{+}^{2}}\Indi{\Sc_{3}}(x,u_{1},u_{2})xG_{1,x}^{(1)}(u_{1},u_{2})du_{1}du_{2}dx,
\end{aligned}
\end{multline}
provided that the limits in the right-hand side exist. By \eqref{eq:Glogdisjoint2}, there exists a constant $C>0$ such that 
\begin{multline*}
\frac{1}{N\log N}\int_{0}^{N(b-a)}\int_{\R_{+}^{2}}\Indi{\Sc_{3}}(x,u_{1},u_{2})xG_{1,x}^{(1)}(u_{1},u_{2})du_{1}du_{2}dx\\
\begin{aligned}
 &\leq \frac{C}{N\log N}\int_{0}^{N(b-a)}\int_{\R_{+}^{2}}\psi(u_1,u_2)(u_{1}u_{2})^2du_{1}du_{2}dx\\
&= \frac{C(b-a)}{\log N}\int_{\R_{+}^{2}}\psi(u_1,u_2)(u_{1}u_{2})^2du_{1}du_{2}.
\end{aligned}
\end{multline*}
Since $d\geq 3$, the integral in the right-hand side is finite, and hence 
\begin{align*}
\lim_{N\rightarrow\infty}\frac{1}{N\log N}\int_{0}^{N(b-a)}\int_{\R_{+}^{2}}\Indi{\Sc_{3}}(x,u_{1},u_{2})xG_{1,x}^{(1)}(u_{1},u_{2})du_{1}du_{2}dx=0.
\end{align*}
Therefore, by \eqref{eq:InckerG11}, 
\begin{multline}\label{eq:InckerG111}
\lim_{\varepsilon\rightarrow0}\frac{\varepsilon^{d-2}}{\log(1/\varepsilon)}\E\left[(\widetilde{J}_{b}^{\varepsilon}-\widetilde{J}_{a}^{\varepsilon})^2\right]\\
\begin{aligned}
	&=\lim_{N\rightarrow\infty}\frac{2d(2\pi)^{-d}}{3\log N}\int_{0}^{N(b-a)}\int_{\R_{+}^{2}}(b-a)\Indi{\Sc_{3}}(x,u_{1},u_{2})G_{1,x}^{(1)}(u_{1},u_{2})du_{1}du_{2}dx.
\end{aligned}
\end{multline}
Applying L'\^Hopital's rule in \eqref{eq:InckerG111}, we get 
\begin{multline}\label{eq:InckerG12}
\lim_{\varepsilon\rightarrow0}\frac{\varepsilon^{d-2}}{\log(1/\varepsilon)}\E\left[(\widetilde{J}_{b}^{\varepsilon}-\widetilde{J}_{a}^{\varepsilon})^2\right]\\
\begin{aligned}
	&=\lim_{N\rightarrow\infty}\frac{2d(2\pi)^{-d}}{3}\int_{\R_{+}^{2}}N(b-a)^2\Indi{\Sc_{3}}(N(b-a),u_{1},u_{2})G_{1,N(b-a)}^{(1)}(u_{1},u_{2})du_{1}du_{2}dx.
\end{aligned}
\end{multline}
By \eqref{eq:Glogdisjoint2}, the integrand in the right-hand side is bounded by the function 
$$C\psi(u_1,u_2)(u_{1}u_{2})^2$$ 
for some constant $C>0$. On the other hand, using \eqref{eq:IpHbig}, we can easily check that  
\begin{align*}
\Abs{\mu(x,v_{1},v_{2})}
  &=\Abs{\Ip{\Indi{[0,v_{1}]},\Indi{[x,x+v_{2}]}}_{\Hg}}\\
	&=H(2H-1)v_{1}v_{2}\int_{[0,1]^{2}}\Abs{x+v_{2}w_{2}-v_{1}w_{1}}^{2H-2}dw_{1}dw_{2}\\
	&=\frac{3v_{1}v_{2}}{8}\int_{[0,1]^{2}}\Abs{x+v_{2}w_{2}-v_{1}w_{1}}^{-\frac{1}{2}}dw_{1}dw_{2},
\end{align*}
so that
\begin{align*}
\lim_{N\rightarrow\infty}N(b-a)\mu(N(b-a),u_{1},u_{2})^2
  &=\frac{3^2(u_{1}u_{2})^2}{2^{6}},
\end{align*}
and hence, 
\begin{align*}
\lim_{N\rightarrow\infty}N(b-a)\Indi{\Sc_{3}}(N(b-a),u_{1},u_{2})G_{1,N(b-a)}^{(1)}(u_{1},u_{2})=\frac{3^2}{2^6}\psi(u_1,u_2)(u_{1}u_{2})^2.
\end{align*}
Therefore, by applying the dominated convergence theorem to \eqref{eq:InckerG12}, we get 
\begin{align*}
\lim_{\varepsilon\rightarrow0}\frac{\varepsilon^{d-2}}{\log(1/\varepsilon)}\E\left[(\widetilde{J}_{b}^{\varepsilon}-\widetilde{J}_{a}^{\varepsilon})^2\right]
	&=(b-a)\frac{3d}{2^{d+5}\pi^{d}}\left(\int_{\R_{+}}(1+u^{\frac{3}{2}})^{-\frac{d}{2}-1}u^2du\right)^2.
\end{align*}
\end{proof}
Relation \eqref{eq:varfddchaoslog} follows from the previous inequality. The proof is now complete.
\section{Proof of Theorems \ref{Teo:convergence}, \ref{Teo:convergenceHermite} and \ref{Teo:convergencelog}}\label{section:limit_theorems}
In the sequel, $W=\{W_{t}\}_{t\geq0}$ will denote a standard one-dimensional Brownian motion independent of $B$, and $X^{j}=\{X_{t}^{j}\}_{t\geq0}$ will denote the second order Hermite process introduced in Section \ref{sec:chaos}.\\

\noindent\textbf{Proof of Theorem \ref{Teo:convergence}}\\
We start with the proof of Theorem \ref{Teo:convergence}, which will be done in two steps.

\medskip
\noindent
{\it Step 1.} First we prove the convergence of the finite dimensional distributions, namely, we will show that for every $r\in\N$, and $T_{1},\dots, T_{r}\geq0$ fixed, it holds 
\begin{align}\label{eq:fdd}
\varepsilon^{\frac{d}{2}-\frac{3}{4H}}\left((I_{T_{1}}^{\varepsilon},\dots, I_{T_{r}}^{\varepsilon})-\E\left[(I_{T_{1}}^{\varepsilon},\dots, I_{T_{r}}^{\varepsilon})\right]\right)\stackrel{Law}{\rightarrow} \sigma (W_{T_{1}},\dots, W_{T_{r}}),
\end{align}
as $\varepsilon\rightarrow0$, where $\sigma$ is the finite constant defined by \eqref{eq:sigmadef}. To this end, define the kernels $h_{2q,T_{i}}^{\varepsilon}$ by \eqref{eq:kerneldef2}, and the constants $\sigma_{q}^2$ by \eqref{eq:sigqdef}, for $q\in\N$. Notice that the constants $\sigma_{q}^2$ are well defined due to the condition $\frac{3}{2d}<H<\frac{3}{4}$. 
Define as well the matrices $C_{q}=\{C_{q}^{i,j}\ |\ 1\leq i,j\leq r\}$ and $C=\{C^{i,j}\ |\ 1\leq i,j\leq r\}$, by $C_{q}^{i,j}:=\sigma_{q}^{2}(T_{i}\wedge T_{j})$, and $C^{i,j}:=\sigma^{2}(T_{i}\wedge T_{j})$. Since $I_{T_{i}}^{\varepsilon}$ has chaos decomposition \eqref{eq:chaos2}, by Theorem \ref{Fourthmoment}, we deduce that in order to prove the convergence \eqref{eq:fdd}, it suffices to show the following properties:
\begin{enumerate}
\item[(i)] For every fixed $q\geq1$, and $1\leq i,j\leq r$, we have 
$$\varepsilon^{d-\frac{3}{2H}}(2q)!\Ip{h_{2q,T_{i}}^{\varepsilon},h_{2q,T_j}^{\varepsilon}}_{(\Hg^{d})^{\otimes 2q}}\rightarrow \sigma_{q}^{2}(T_{i}\wedge T_{j}),~~~~\text{ as }~~\varepsilon\rightarrow0.$$ 
\item[(ii)] The constants $\sigma_{q}^{2}$ satisfy $\sum_{q=1}^{\infty}\sigma_{q}^{2}=\sigma^{2}$. In particular, $C^{i,j}=\lim_{Q\rightarrow\infty}\sum_{q=1}^{Q}C_{q}^{i,j}$,
\item[(iii)] For all $q\geq1$ and $i=1,\dots, r$, the random variables $\varepsilon^{\frac{d}{2}-\frac{3}{4H}}I_{2q}(h_{2q,T_{i}}^{\varepsilon})$ converge in law to a centered Gaussian distribution as $\varepsilon\rightarrow0$,
\item[(iv)] $\lim_{Q\rightarrow\infty}\sup_{\varepsilon\in(0,1)}\varepsilon^{d-\frac{3}{2H}}\sum_{q=Q}^{\infty}(2q)!\Norm{h_{2q,T_i}^{\varepsilon}}_{(\Hg^{d})^{\otimes 2q}}^{2}=0$, for every $i=1,\dots, r$.
\end{enumerate}
Part (i) follows from Theorem \ref{teo:covchaos}. Condition (ii) follows from equation \eqref{eq:sumsigmaq}. In \cite[Theorem~2]{HuNu}, it was proved that for $T>0$ fixed, 
$\varepsilon^{\frac{d}{2}-\frac{3}{4H}}I_{2q}(h_{2q,T}^{\varepsilon})$ converges in law to a centered Gaussian random variable when $\varepsilon\rightarrow0$, and 
$$\lim_{Q\rightarrow\infty}\sup_{\varepsilon\in(0,1)}\varepsilon^{d-\frac{3}{2H}}\sum_{q=Q}^{\infty}(2q)!\Norm{h_{2q,T}^{\varepsilon}}_{(\Hg^{d})^{\otimes 2q}}^{2}=0,$$
which proves conditions (iii) and (iv). This finishes the proof of \eqref{eq:fdd}. 
 
 \medskip
 \noindent
 {\it Step 2.} We are going to show the tightness  of  the sequence of processes $\{\varepsilon^{\frac{d}{2}-\frac{3}{4H}}(I_{T}^{\varepsilon}-\E\left[I_{T}^{\varepsilon}\right])\}_{T\geq0}$. To this end, we will prove that there exists a sufficiently small $p>2$, depending only on $d$ and $H$, such that for every $0\leq T_{1}\leq T_{2}$, it holds
\begin{align}\label{eq:billbound}
\sup_{\varepsilon\in(0,1)}\E\left[\left|\varepsilon^{\frac{d}{2}-\frac{3}{4H}}\left(I_{T_{2}}^{\varepsilon}-\E\left[I_{T_{2}}^{\varepsilon}\right]-\left(I_{T_{1}}^{\varepsilon}-\E\left[I_{T_{1}}^{\varepsilon}\right]\right)\right)\right|^{p}\right]\leq C\Abs{T_{2}-T_{1}}^{\frac{p}{2}}, 
\end{align}
for some constant $C>0$ only  depending on $d$, $p$ and $H$. The tightness property for $\{\varepsilon^{\frac{d}{2}-\frac{3}{4H}}(I_{T}^{\varepsilon}-\E\left[I_{T}^{\varepsilon}\right])\}_{T\geq0}$ then follows from the Billingsley criterion (see \cite[Theorem~12.3]{Billin}). 
 
In order to prove \eqref{eq:billbound} we proceed as follows. Define, for $0\le T_{1} \le T_{2}$ fixed, the random variable $Z_{\varepsilon}=Z_{\varepsilon}(T_{1},T_{2})$, by 
\begin{align}\label{eq:Zdef}
Z_{\varepsilon}
  &:=I_{T_{2}}^{\varepsilon}-\E\left[I_{T_{2}}^{\varepsilon}\right]-\left(I_{T_{1}}^{\varepsilon}-\E\left[I_{T_{1}}^{\varepsilon}\right]\right).
\end{align}
From the chaos decomposition \eqref{eq:chaos2}, we can easily check that $J_{0}(L^{-1}Z_{\varepsilon})=J_{1}(L^{-1}Z_{\varepsilon})=0$, which in turn implies that 
$$\E\left[DL^{-1}Z_{\varepsilon}\right]=J_{0}(DL^{-1}Z_{\varepsilon})=DJ_{1}(L^{-1}Z_{\varepsilon})=0.$$ 
Hence, by \eqref{eq:Meyer}, there exists a constant $c_{p}>0$ such that 
\begin{align}\label{eq:ZMeyer}
\Norm{Z_{\varepsilon}}_{L^{p}(\Omega)}
  &\leq c_{p}\Norm{D^{2}L^{-1}Z_{\varepsilon}}_{L^{p}(\Omega;(\Hg^{d})^{\otimes 2})}.
\end{align}
The right-hand side of the previous inequality can be estimated as follows. From \eqref{eq:Mehler2}, we can easily check that
\begin{align}\label{eq:D2Linv}
D^2L^{-1} Z_{\varepsilon} 
  &=\int_0^\infty\int_{\mathcal{K}_{T_{1},T_{2}}} D^2P_{\theta}[p_{\varepsilon}(B_{t}-B_{s})]dsdtd\theta,
\end{align}
where $\mathcal{K}_{T_{1},T_{2}}$ is defined by \eqref{eq:Sdef}. Let $\widetilde{B}$ be an independent copy of $B$. Using  Mehler's formula \eqref{eq:Mehler} and the semigroup property of the heat kernel, we obtain  
\begin{eqnarray}\label{eq:PGker}
P_{\theta}[p_{\varepsilon}(B_{t}-B_{s})]
  &=&\widetilde{\E}\left[p_{\varepsilon}(e^{-\theta}(B_{t}-B_{s})+\sqrt{1-e^{-2\theta}}(\widetilde{B}_{t}-\widetilde{B}_{s}))\right]\\
  &=& p_{\lambda_{\varepsilon}(\theta,s,t)}(e^{-\theta}(B_{t}-B_{s})),\nonumber
  \end{eqnarray}
where the function $\lambda_{\varepsilon}=\lambda_{\varepsilon}(\theta,s,t)$ is defined by 
\begin{align}\label{eq:lamdadef}
\lambda_{\varepsilon}(\theta,s,t)
  &:=\varepsilon+(1-e^{-2\theta})(t-s)^{2H}.
\end{align}
This implies that for every multi-index $\textbf{i}=(i_{1},i_{2})$, with $1\leq i_{1},i_{2}\leq d$, we have
\begin{eqnarray}   \nonumber
&&D^2 P_\theta [p_\varepsilon (B_t-B_s)](\textbf{i},x_{1},x_{2}) =
   e^{-2\theta}\Indi{[s,t]}(x_{1})\Indi{[s,t]}(x_{2})\\
	& & \qquad \times  \lambda_{\varepsilon}(\theta,s,t)^{-1}p_{\lambda_{\varepsilon}(\theta,s,t)}(e^{-\theta}(B_{t}-B_{s}))g_{\textbf{i},\lambda_{\varepsilon}(\theta,s,t)}(e^{-\theta}(B_{t}-B_{s})), \label{eq:D2Ornstein1}
\end{eqnarray}
where the function $g_{\textbf{i},\lambda}$, for $\lambda>0$, is defined by 
\begin{align*}
g_{\textbf{i},\lambda}(x_{1},\dots, x_{d})
  &=\Multicase{\lambda^{-1}x_{i_{1}}^2-1}{i_{1}=i_{2}}{\lambda^{-1}x_{i_{1}}x_{i_{2}}}{i_{1}\neq i_{2}.}
\end{align*}
From \eqref{eq:D2Linv} and \eqref{eq:D2Ornstein1}, we deduce that 
\begin{align}\label{ineq:Zp1arg}
\Norm{D^2L^{-1}Z_{\varepsilon}}_{(\Hg^{d})^{\otimes 2}}^{2}
	&= \int_{\R_{+}^{2}}\int_{\mathcal{K}_{T_1,T_2}^2}e^{-2\theta-2\beta}\mu(s_{2}-s_{1},t_{1}-s_{1},t_{2}-s_{2})^2\nonumber\\
	&\times (\lambda_{\varepsilon}(\theta,s_{1},t_{1})\lambda_{\varepsilon}(\beta,s_{2},t_{2}))^{-1}p_{\lambda_{\varepsilon}(\theta,s_{1},t_{1})}(e^{-\theta} (B_{t_1}-B_{s_1}))\nonumber\\
	&\times p_{\lambda_{\varepsilon}(\beta,s_{2},t_{2})} (e^{-\beta} (B_{t_2}-B_{s_2})) \sum_{\textbf{i}}g_{\textbf{i},\lambda_{\varepsilon}(\theta,s_{1},t_{1})}\left(e^{-\theta}(B_{t_{1}}-B_{s_{1}})\right)\nonumber\\
	&\times g_{\textbf{i},\lambda_{\varepsilon}(\beta,s_{2},t_{2})}\left(e^{-\beta}(B_{t_{2}}-B_{s_{2}})\right)ds_1 dt_1 ds_2 dt_2d\theta d\beta,
\end{align}
where the sum runs over all the possible muti-indices $\textbf{i}=(i_{1},i_{2})$, with $1\leq i_{1},i_{2}\leq d$. Using Minkowski inequality, as well as \eqref{eq:ZMeyer} and \eqref{ineq:Zp1arg}, we deduce that 
\begin{align}\label{ineq:Zp1}
\Norm{Z_{\varepsilon}}_{L^{p}(\Omega)}^{2}
	&\leq c_{p}^{2}\Norm{D^{2}L^{-1}Z_{\varepsilon}}_{L^{p}(\Omega;(\Hg^{d})^{\otimes2})}^{2}
	=c_{p}^{2}\Norm{\Norm{D^{2}L^{-1}Z_{\varepsilon}}_{(\Hg^{d})^{\otimes 2}}^{2}}_{L^{\frac{p}{2}}(\Omega)}\nonumber\\
  &\leq c_{p}^2\int_{\R_{+}^{2}}\int_{\mathcal{K}_{T_1,T_2}^2}e^{-2\theta-2\beta}\mu(s_{2}-s_{1},t_{1}-s_{1},t_{2}-s_{2})^2\nonumber\\
	&\times (\lambda_{\varepsilon}(\theta,s_{1},t_{1})\lambda_{\varepsilon}(\beta,s_{2},t_{2}))^{-1}\big\|p_{\lambda_{\varepsilon}(\theta,s_{1},t_{1})}(e^{-\theta} (B_{t_1}-B_{s_1}))\nonumber\\
	&\times p_{\lambda_{\varepsilon}(\beta,s_{2},t_{2})} (e^{-\beta} (B_{t_2}-B_{s_2})) \sum_{\textbf{i}}g_{\textbf{i},\lambda_{\varepsilon}(\theta,s_{1},t_{1})}\left(e^{-\theta}(B_{t_{1}}-B_{s_{1}})\right)\nonumber\\
	&\times g_{\textbf{i},\lambda_{\varepsilon}(\beta,s_{2},t_{2})}\left(e^{-\beta}(B_{t_{2}}-B_{s_{2}})\right) \big\|_{L^{\frac{p}{2}}(\Omega)}ds_1 dt_1 ds_2 dt_2d\theta d\beta.
\end{align}
Next we bound the $L^{\frac{p}{2}}(\Omega)$-norm in the right-hand side of the previous inequality. Let $y\in(0,1)$ be fixed. We can easily check that there exists a constant $C>0$ only depending on $y$, such that for every $\lambda_{1},\lambda_{2}>0$ and $\eta,\xi\in\R^{d}$, and every multi-index $\textbf{i}=(i_{1},i_{2})$, with $1\leq i_{1},i_{2}\leq d$, 
\begin{align}\label{ineq:gbound}
\Abs{g_{\textbf{i},\lambda_{1}}(\eta)g_{\textbf{i},\lambda_{2}}(\xi)}
  &\leq (1+\lambda_{1}^{-1}\Norm{\eta}^2)(1+\lambda_{2}^{-1}\Norm{\xi}^2)\leq Ce^{\frac{y}{2}(\lambda_{1}^{-1}\Norm{\eta}^{2}+\lambda_{2}^{-1}\Norm{\xi}^{2})}.
\end{align}
From \eqref{ineq:Zp1} and \eqref{ineq:gbound}, it follows that there exists a constant $C>0$, not depending on $\varepsilon,T_{1},T_{2}$, such that
 \begin{multline}\label{ineq:Zp2}
\Norm{Z_{\varepsilon}}_{L^{p}(\Omega)}^{2}\\
\begin{aligned} 
  &\leq C \int_{\R_{+}^{2}}\int_{\mathcal{K}_{T_1,T_2}^2}e^{-2\theta-2\beta}\mu(s_{2}-s_{1},t_{1}-s_{1},t_{2}-s_{2})^2\\
	&\times(\lambda_{\varepsilon}(\theta,s_{1},t_{1})\lambda_{\varepsilon}(\beta,s_{2},t_{2}))^{-1}\\
	&\times\left\|p_{\frac{\lambda_{\varepsilon}(\theta,s_{1},t_{1})}{1-y}}(e^{-\theta}(B_{t_{1}}-B_{s_{1}}))p_{\frac{\lambda_{\varepsilon}(\beta,s_{2},t_{2})}{1-y}}(e^{-\beta}(B_{t_{2}}-B_{s_{2}}))\right\|_{L^{\frac{p}{2}}(\Omega)}ds_1 dt_1 ds_2 dt_2d\theta d\beta.
\end{aligned}
\end{multline}
Proceeding as in the proof of \eqref{eq:pdet0}, we can easily check that 
\begin{multline*}
\E\left[p_{\frac{\lambda_{\varepsilon}(\theta,s_{1},t_{1})}{1-y}}(e^{-\theta}(B_{t_{1}}-B_{s_{1}}))^{\frac{p}{2}}p_{\frac{\lambda_{\varepsilon}(\beta,s_{2},t_{2})}{1-y}}(e^{-\beta}(B_{t_{2}}-B_{s_{2}}))^{\frac{p}{2}}\right]\\
\begin{aligned} 
  &=(2\pi)^{-\frac{d(p-2)}{2}}\left(\frac{\lambda_{\varepsilon}(\theta,s_{1},t_{1})\lambda_{\varepsilon}(\beta,s_{2},t_{2})}{(1-y)^2}\right)^{-\frac{dp}{4}+\frac{d}{2}}\frac{2^{d}}{p^{d}}e^{d(\theta+\beta)}\\
	&\times\E\left[p_{\frac{2\lambda_{\varepsilon}(\theta,s_{1},t_{1})e^{2\theta}}{p(1-y)}}(B_{t_{1}}-B_{s_{1}})p_{\frac{2\lambda_{\varepsilon}(\beta,s_{2},t_{2})e^{2\beta}}{p(1-y)}}(B_{t_{2}}-B_{s_{2}})\right]\\
  &=(2\pi)^{-\frac{d(p-2)}{2}}\left(\frac{\lambda_{\varepsilon}(\theta,s_{1},t_{1})\lambda_{\varepsilon}(\beta,s_{2},t_{2})}{(1-y)^2}\right)^{-\frac{dp}{4}+\frac{d}{2}}\frac{2^{d}}{p^{d}}e^{d(\theta+\beta)}\\
	&\times\Abs{\frac{2}{p(1-y)}\Sqmatrix{\lambda_{\varepsilon}(\theta,s_{1},t_{1})e^{2\theta}}{0}{0}{\lambda_{\varepsilon}(\beta,s_{2},t_{2})e^{2\beta}}+\Sigma}^{-\frac{d}{2}},
\end{aligned}
\end{multline*}
where $\Sigma=\{\Sigma_{i,j}\}_{1\leq i,j\leq2}$, denotes the covariance matrix of $(B_{t_{1}}^{(1)}-B_{s_{1}}^{(1)},B_{t_{2}}^{(1)}-B_{s_{2}}^{(1)})$, whose components are given by $\Sigma_{1,1}=(t_{1}-s_{1})^{2H}$, $\Sigma_{1,2}=\Sigma_{2,1}=\mu(s_{2}-s_{1},t_{1}-s_{1},t_{2}-s_{2})$, and $\Sigma_{2,2}=(t_{2}-s_{2})^{2H}$. Therefore, there exists a constant $C>0$ only depending on $p$ and $d$, such that
\begin{multline*}
\E\left[p_{\frac{\lambda(\theta,s_{1},t_{1})}{1-y}}(e^{-\theta}(B_{t_{1}}-B_{s_{1}}))^{\frac{p}{2}}p_{\frac{\lambda(\beta,s_{2},t_{2})}{1-y}}(e^{-\beta}(B_{t_{2}}-B_{s_{2}}))^{\frac{p}{2}}\right]\\
\begin{aligned} 
  &\leq C(\lambda_{\varepsilon}(\theta,s_{1},t_{1})\lambda_{\varepsilon}(\beta,s_{2},t_{2}))^{-\frac{dp}{4}+\frac{d}{2}}\\
	&\times e^{d(\theta+\beta)}\Abs{\frac{2}{p(1-y)}\Sqmatrix{\lambda_{\varepsilon}(\theta,s_{1},t_{1})e^{2\theta}}{0}{0}{\lambda_{\varepsilon}(\beta,s_{2},t_{2})e^{2\beta}}+\Sigma}^{-\frac{d}{2}}.
\end{aligned}
\end{multline*}
Choosing $y<1-\frac{2}{p}$, so that $\frac{p(1-y)}{2}\Sigma\geq\Sigma$, we deduce that there exists a constant $C>0$ only depending on $p,y$ and $d$, such that 
\begin{multline*}
\E\left[p_{\frac{\lambda_{\varepsilon}(\theta,s_{1},t_{1})}{1-y}}(e^{-\theta}(B_{t_{1}}-B_{s_{1}}))^{\frac{p}{2}}p_{\frac{\lambda_{\varepsilon}(\beta,s_{2},t_{2})}{1-y}}(e^{-\beta}(B_{t_{2}}-B_{s_{2}}))^{\frac{p}{2}}\right]\\
\begin{aligned} 
 &\leq C(\lambda_{\varepsilon}(\theta,s_{1},t_{1})\lambda_{\varepsilon}(\beta,s_{2},t_{2}))^{-\frac{dp}{4}+\frac{d}{2}}\\
 &\times e^{d(\theta+\beta)}\Abs{\Sqmatrix{\lambda_{\varepsilon}(\theta,s_{1},t_{1})e^{2\theta}+(t_{1}-s_{1})^{2H}}{\mu(s_{2}-s_{1},t_{1}-s_{2},t_{2}-s_{2})}{\mu(s_{2}-s_{1},t_{1}-s_{2},t_{2}-s_{2})}{\lambda_{\varepsilon}(\beta,s_{2},t_{2})e^{2\beta}+(t_{2}-s_{2})^{2H}}}^{-\frac{d}{2}}.
\end{aligned}
\end{multline*}
Hence, by the multilinearity of the determinant function, 
\begin{multline}\label{ineq:prodp}
\E\left[p_{\frac{\lambda_{\varepsilon}(\theta,s_{1},t_{1})}{1-y}}(e^{-\theta}(B_{t_{1}}-B_{s_{1}}))^{\frac{p}{2}}p_{\frac{\lambda_{\varepsilon}(\beta,s_{2},t_{2})}{1-y}}(e^{-\beta}(B_{t_{2}}-B_{s_{2}}))^{\frac{p}{2}}\right]\\
\begin{aligned} 
 &\leq C(\lambda_{\varepsilon}(\theta,s_{1},t_{1})\lambda_{\varepsilon}(\beta,s_{2},t_{2}))^{-\frac{dp}{4}+\frac{d}{2}}\\
 &\times \Abs{\Sqmatrix{\lambda_{\varepsilon}(\theta,s_{1},t_{1})+e^{-2\theta}(t_{1}-s_{1})^{2H}}{e^{-2\beta}\mu(s_{2}-s_{1},t_{1}-s_{2},t_{2}-s_{2})}{e^{-2\theta}\mu(s_{2}-s_{1},t_{1}-s_{2},t_{2}-s_{2})}{\lambda_{\varepsilon}(\beta,s_{2},t_{2})+e^{-2\beta}(t_{2}-s_{2})^{2H}}}^{-\frac{d}{2}}.
\end{aligned}
\end{multline}
By relation \eqref{eq:lamdadef}, we have that $\lambda_{\varepsilon}(\theta,s,t)+e^{-2\theta}(t-s)^{2H}=\varepsilon+(t-s)^{2H}$ for every $\theta,s,t>0$. As a consequence,  relation \eqref{ineq:prodp} can be written as
\begin{multline}\label{eq:pbound}
\E\left[p_{\frac{\lambda_{\varepsilon}(\theta,s_{1},t_{1})}{1-y}}(e^{-\theta}(B_{t_{1}}-B_{s_{1}}))^{\frac{p}{2}}p_{\frac{\lambda_{\varepsilon}(\beta,s_{2},t_{2})}{1-y}}(e^{-\beta}(B_{t_{2}}-B_{s_{2}}))^{\frac{p}{2}}\right]\\
\begin{aligned} 
  &\leq C(\lambda_{\varepsilon}(\theta,s_{1},t_{1})\lambda_{\varepsilon}(\beta,s_{2},t_{2}))^{-\frac{dp}{4}+\frac{d}{2}}\\
	&\times \bigg(\varepsilon^{2}+\varepsilon((t_{1}-s_{1})^{2H}+(t_{2}-s_{2})^{2H})+(t_{1}-s_{1})^{2H}(t_{2}-s_{2})^{2H}-e^{-2\beta-2\theta}\mu^{2}\bigg)^{-\frac{d}{2}}\\
	&\leq C(\lambda_{\varepsilon}(\theta,s_{1},t_{1})\lambda_{\varepsilon}(\beta,s_{2},t_{2}))^{-\frac{dp}{4}+\frac{d}{2}}\Theta_{\varepsilon}(s_{2}-s_{1},t_{1}-s_{1},t_{2}-s_{2})^{-\frac{d}{2}},
\end{aligned}
\end{multline}
where $\Theta_{\varepsilon}(x,u_{1},u_{2})$ is defined by \eqref{eq:Ldef}. From \eqref{eq:lamdadef}, \eqref{ineq:Zp2} and \eqref{eq:pbound}, it follows that 
\begin{align}\label{ineq:Zp3}
\Norm{Z_{\varepsilon}}_{L^{p}(\Omega)}^{2}
  &\leq C \int_{\R_{+}^{2}}\int_{\Sc_{T_1,T_2}^2}e^{-2\theta-2\beta}\mu(s_{2}-s_{1},t_{1}-s_{1},t_{2}-s_{2})^2\nonumber\\
	&\times((\varepsilon+(1-e^{-2\theta})(t_{1}-s_{1})^{2H})(\varepsilon+(1-e^{-2\beta})(t_{2}-s_{2})^{2H}))^{-1-\frac{d}{2}+\frac{d}{p}}\nonumber\\
	&\times\Theta_{\varepsilon}(s_{2}-s_{1},t_{1}-s_{1},t_{2}-s_{2})^{-\frac{d}{p}}ds_1 dt_1 ds_2 dt_2d\theta d\beta.
\end{align}
Changing the coordinates $(s_{1},t_{1},s_{2},t_{2})$ by $(s_{1},x:=s_{2}-s_{1},u_{1}:=t_{1}-s_{1},u_{2}:=t_{2}-s_{2})$ in \eqref{ineq:Zp3}, we get
\begin{align*}
\Norm{Z_{\varepsilon}}_{L^{p}(\Omega)}^{2}
  &\leq 2C\int_{\R_{+}^{2}}e^{-2\theta-2\beta}\int_{[0,T_{2}]^{3}}\int_{(T_{1}-u_{1})_{+}\vee(T_{1}-x-u_{2})_{+}}^{(T_{2}-u_{1})_{+}\wedge(T_{2}-x-u_{2})_{+}}ds_{1}\nonumber\\
	&\times\mu(x,u_{1},u_{2})^2((\varepsilon+(1-e^{-2\theta})u_{1}^{2H})(\varepsilon+(1-e^{-2\beta})u_{2}^{2H}))^{-1-\frac{d}{2}+\frac{d}{p}}\nonumber\\
	&\times\Theta_{\varepsilon}(x,u_{1},u_{2})^{-\frac{d}{p}}dx du_{1}du_{2}d\theta d\beta.
\end{align*}
Integrating the variable $s_{1}$, and making the change of variables $\eta:=1-e^{-2\theta}$, and $\xi:=1-e^{-2\beta}$, we deduce that there exists a constant $C>0$, such that
\begin{align}\label{ineq:Ze}
\Norm{Z_{\varepsilon}}_{L^{p}(\Omega)}^{2}
  &\leq C(T_{2}-T_{1})\int_{[0,T_{2}]^{3}}\mu(x,u_{1},u_{2})^2\Theta_{\varepsilon}(x,u_{1},u_{2})^{-\frac{d}{p}}\nonumber\\
	&\times\int_{[0,1]^{2}}((\varepsilon+\eta u_{1}^{2H})(\varepsilon+\xi u_{2}^{2H}))^{-1-\frac{d}{2}+\frac{d}{p}}d\eta d\xi dx du_{1}du_{2}.
\end{align}
Changing the coordinates $(x,u_{1},u_{2})$ by $(\varepsilon^{-\frac{1}{2H}}x,\varepsilon^{-\frac{1}{2H}}u_{1},\varepsilon^{-\frac{1}{2H}}u_{2})$ in \eqref{ineq:Ze}, and using the fact that $\Theta_{\varepsilon}(\varepsilon^{-\frac{1}{2H}}x,\varepsilon^{-\frac{1}{2H}}u_{1},\varepsilon^{-\frac{1}{2H}}u_{2})=\varepsilon^{2}\Theta_{1}(x,u_{1},u_{2})$, we get
\begin{align*}
\Norm{\varepsilon^{\frac{d}{2}-\frac{3}{4H}}Z_{\varepsilon}}_{L^{p}(\Omega)}^{2}
  &\leq C(T_{2}-T_{1})\int_{\R_{+}^{3}}\mu(x,u_{1},u_{2})^2\Theta_{1}(x,u_{1},u_{2})^{-\frac{d}{p}}\nonumber\\
	&\times \int_{[0,1]^2}((1+\eta u_{1}^{2H})(1+\xi u_{2}^{2H}))^{-1-\frac{d}{2}+\frac{d}{p}}d\eta d\xi dx du_{1}du_{2}.
\end{align*}
Integrating the variables $\eta$ and $\xi$, we obtain 
\begin{align*}
\Norm{\varepsilon^{\frac{d}{2}-\frac{3}{4H}}Z_{\varepsilon}}_{L^{p}(\Omega)}^{2}
  &\leq C(1+\frac{d}{2}-\frac{d}{p})(T_{2}-T_{1})\int_{\R_{+}^{3}}\frac{\mu(x,u_{1},u_{2})^2}{u_{1}^{2H}u_{2}^{2H}}\Theta_{1}(x,u_{1},u_{2})^{-\frac{d}{p}}\nonumber\\
	&\times (1-(1+u_{1}^{2H})^{-\frac{d}{2}+\frac{d}{p}})(1-(1+u_{2}^{2H})^{-1-\frac{d}{2}+\frac{d}{p}})dx du_{1}du_{2}.
\end{align*}
Hence, choosing $p>2$, we deduce that there exists a constant $C$ only depending on $H,d$ and $p$, such that
\begin{align}\label{ineq:Zp6}
\Norm{\varepsilon^{\frac{d}{2}-\frac{3}{4H}}Z_{\varepsilon}}_{L^{p}(\Omega)}^{2}
  &\leq C(T_{2}-T_{1})\int_{\R_{+}^{3}}\frac{\mu(x,u_{1},u_{2})^2}{u_{1}^{2H}u_{2}^{2H}}\Theta_{1}(x,u_{1},u_{2})^{-\frac{d}{p}}dx du_{1}du_{2}.
\end{align}
Since $Hd>\frac{3}{2}$, we can choose $p$ so that $2<p<\frac{4Hd}{3}$. For this choice of $p$, the integral in the right-hand side of \eqref{ineq:Zp6} is finite  by Lemma \ref{lem:inegraltightness}. Therefore, from \eqref{ineq:Zp6}, it follows that there exists a constant $C>0$, independent of $T_{1},T_{2}$ and $\varepsilon$, such that $\Norm{\varepsilon^{\frac{d}{2}-\frac{3}{4H}}Z_{\varepsilon}}_{L^{p}(\Omega)}^{2}\leq C(T_{2}-T_{1})$, which in turn implies that 
\begin{align}\label{ineq:Zpfinal}
\E\left[\Abs{\varepsilon^{\frac{d}{2}-\frac{3}{4H}}Z_{\varepsilon}}^{p}\right]
  &\leq C(T_{2}-T_{1})^{\frac{p}{2}}.
\end{align} 
Relation \eqref{eq:billbound} then follows from \eqref{ineq:Zpfinal}. This finishes the proof of Theorem \ref{Teo:convergence}.\\~\\

\noindent\textbf{Proof of Theorem \ref{Teo:convergenceHermite}}\\
\noindent Now we proceed with the proof of Theorem \ref{Teo:convergenceHermite}, in which we will prove \eqref{conv:I2L2} and \eqref{conv:I2} in the case $H>\frac{3}{4}$. In order to prove \eqref{conv:I2L2}, it suffices to show that for every $T>0$,
\begin{align}\label{conv:L22}
\varepsilon^{\frac{d}{2}-\frac{3}{2H}+1}(I_{T}^{\varepsilon}-\E\left[I_{T}^{\varepsilon}\right]-J_{2}(I_{T}^{\varepsilon}))
  &\stackrel{L^{2}(\Omega)}{\rightarrow}0,
\end{align}
 and 
\begin{align}\label{conv:fdd2}
\varepsilon^{\frac{d}{2}-\frac{3}{2H}+1}J_{2}(I_{T}^{\varepsilon})
  &\stackrel{L^{2}(\Omega)}{\rightarrow}-\Lambda \sum_{j=1}^{d}X_{T}^{j},
\end{align}
as $\varepsilon\rightarrow0$. Relation \eqref{conv:L22} follows from Lemma \ref{lem:chaos4tozero}. In order to prove the convergence \eqref{conv:fdd2} we proceed as follows. Using \eqref{eq:kerneldef}, we can easily check that
\begin{align*}
J_{2}(I_{T}^{\varepsilon})
  &=-\frac{(2\pi)^{-\frac{d}{2}}}{2}\sum_{j=1}^{d}\int_{0}^{T}\int_{0}^{T-u}(\varepsilon+u^{2H})^{-\frac{d}{2}-1}u^{2H}H_{2}\left(\frac{B_{s+u}^{(j)}-B_{s}^{(j)}}{u^{H}}\right)dsdu.
\end{align*}
Making the change of variable $v:=\varepsilon^{-\frac{1}{2H}}u$, we get 
\begin{align}\label{eq:J2}
\varepsilon^{\frac{d}{2}-\frac{3}{2H}+1}J_{2}(I_{T}^{\varepsilon})
  &=-\frac{(2\pi)^{-\frac{d}{2}}}{2}\sum_{j=1}^{d}\int_{0}^{\varepsilon^{-\frac{1}{2H}}T}\int_{0}^{T-\varepsilon^{\frac{1}{2H}}v}(1+v^{2H})^{-\frac{d}{2}-1}v^{2H}\varepsilon^{1-\frac{1}{H}}H_{2}\left(\frac{B_{s+\varepsilon^{\frac{1}{2H}}v}^{(j)}-B_{s}^{(j)}}{\sqrt{\varepsilon}v^{H}}\right)dv\nonumber\\
	&=-\frac{(2\pi)^{-\frac{d}{2}}}{2}\sum_{j=1}^{d}\int_{0}^{\varepsilon^{-\frac{1}{2H}}T}(1+u^{2H})^{-\frac{d}{2}-1}u^{2}I_{2}(\varphi_{j,T-\varepsilon^{\frac{1}{2H}}u}^{\varepsilon^{\frac{1}{2H}}u})du,
\end{align}
where the kernel $\varphi_{j,T-\varepsilon^{\frac{1}{2H}}u}^{\varepsilon^{\frac{1}{2H}}u}$ is defined by \eqref{eq:phidef}. 
From \eqref{eq:J2}, it follows that for every $\varepsilon,\eta>0$, 
\begin{multline}\label{eq:J2Cauchy}
\E\left[\varepsilon^{\frac{d}{2}-\frac{3}{2H}-1}J_{2}(I_{T}^{\varepsilon})\eta^{\frac{d}{2}-\frac{3}{2H}-1}J_{2}(I_{T}^{\eta})\right]\\
\begin{aligned}
  &=\frac{(2\pi)^{-d}}{2}\sum_{j=1}^{d}\int_{0}^{\varepsilon^{-\frac{1}{2H}}T}\int_{0}^{\eta^{-\frac{1}{2H}}T}(1+u_{1}^{2H})^{-\frac{d}{2}-1}(1+u_{2}^{2H})^{-\frac{d}{2}-1}\\
	&\times (u_{1}u_{2})^{2}\Ip{\varphi_{j,T-\varepsilon^{\frac{1}{2H}}u_{1}}^{\varepsilon^{\frac{1}{2H}}u_{1}},\varphi_{j,T-\eta^{\frac{1}{2H}}u_{1}}^{\eta^{\frac{1}{2H}}u_{2}}}_{(\Hg^{d})^{\otimes 2}}d u_{1} du_{2}.
\end{aligned}
\end{multline}
By \eqref{eq:Ipphi}, 
\begin{align}\label{conv:phiinnerproduct}
\lim_{\varepsilon\rightarrow0}\Ip{\varphi_{j,T-\varepsilon^{\frac{1}{2H}}u_{1}}^{\varepsilon^{\frac{1}{2H}}u_{1}},\varphi_{j,T-\eta^{\frac{1}{2H}}u_{1}}^{\eta^{\frac{1}{2H}}u_{2}}}_{(\Hg^{d})^{\otimes 2}}
  &=H^{2}(2H-1)^2\int_{[0,T]^{2}}\Abs{s_{1}-s_{2}}^{4H-4}ds_{1}ds_{2}\nonumber\\
	&=\frac{H^2(2H-1)}{4H-3}T^{4H-2}.
\end{align}
On the other hand, by  \eqref{ineq:supIpphi}, there exists a constant $C_{H,T}>0$, only depending on $H$ and $T$, such that 
\begin{align*}
0\leq \Ip{\varphi_{j,T-\varepsilon^{\frac{1}{2H}}u_{1}}^{\varepsilon^{\frac{1}{2H}}u_{1}},\varphi_{j,T-\eta^{\frac{1}{2H}}u_{1}}^{\eta^{\frac{1}{2H}}u_{2}}}_{(\Hg^{d})^{\otimes 2}}
  &\leq C_{H,K}.
\end{align*}
Hence, using the pointwise convergence \eqref{conv:phiinnerproduct}, we can apply the dominated convergence theorem to \eqref{eq:J2Cauchy}, in order to obtain
\begin{align*}
\lim_{\varepsilon,\nu\rightarrow0}\E\left[\varepsilon^{\frac{d}{2}-\frac{3}{2H}-1}J_{2}(I_{T}^{\varepsilon})\eta^{\frac{d}{2}-\frac{3}{2H}-1}J_{2}(I_{T}^{\eta})\right]
  &=\frac{d(2\pi)^{-d}\Lambda^{2}H^2(2H-1)T^{4H-2}}{2(4H-3)},
\end{align*}
where the constant $\Lambda$ is defined by \eqref{eq:Lambdadef}. From the previous identity, it follows that 
$\varepsilon^{\frac{d}{2}-\frac{3}{2H}-1}J_{2}(I_{T}^{\varepsilon})$ 
converges to some $\widetilde{h}_{T}\in (\Hg^{d})^{\otimes 2}$, as $\varepsilon\rightarrow0$.\\~\\
Recall that the element $\pi_{T}^{j}\in(\Hg^{d})^{\otimes d}$, is defined as the limit in $(\Hg^{d})^{\otimes 2}$, as $\varepsilon\rightarrow0$, of $\varphi_{j,T}^{\varepsilon}$, and is characterized by  relation \eqref{eq:picharact}. In order to prove \eqref{conv:fdd2}, it suffices to show that $\widetilde{h}_{T}=\Lambda\sum_{j=1}^{d}\pi_{T}^{j}$, or equivalently, that
$$\Ip{\widetilde{h}_{T},f_{1}\otimes f_{2}}_{(\Hg^{d})^{\otimes 2}}=-\Lambda\sum_{j=1}^{d}\Ip{\pi_{T}^{j},f_{1}\otimes f_{2}}_{(\Hg^{d})^{\otimes 2}},$$ 
for vectors of step functions with compact support $f_{i}=(f_{i}^{(1)},\dots, f_{i}^{(d)})\in\Hg^{d}$, $i=1,2$. By \eqref{eq:J2}, 
\begin{align}\label{eq:actionh}
\lim_{\varepsilon\rightarrow0}\Ip{\widetilde{h}_{T},f_{1}\otimes f_{2}}_{(\Hg^{d})^{\otimes 2}}
  &=\lim_{\varepsilon\rightarrow0}-\frac{(2\pi)^{-\frac{d}{2}}}{2}\int_{0}^{\varepsilon^{-\frac{1}{2H}}T}(1+u^{2H})^{-\frac{d}{2}}u^{2}\Ip{\varphi_{j,T-\varepsilon^{\frac{1}{2H}u}}^{\varepsilon^{\frac{1}{2H}}u},f_{1}\otimes f_{2}}_{(\Hg^{d})^{\otimes 2}}du.
\end{align}
Proceeding as in the proof of \eqref{conv:phiinnerproduct}, we can easily check that 
\begin{align*}
\lim_{\varepsilon\rightarrow0}\Ip{\varphi_{j,T-\varepsilon^{\frac{1}{2H}u}}^{\varepsilon^{\frac{1}{2H}}u},f_{1}\otimes f_{2}}_{(\Hg^{d})^{\otimes 2}}
  &=-H^{2}(2H-1)^{2}\sum_{j=1}^{d}\int_{0}^{T}\prod_{i=1,2}\int_{0}^{T}\Abs{s-\eta}^{2H-2}f_{i}^{(j)}(\eta)d\eta ds.
\end{align*}
Moreover, by \eqref{ineq:supIpphi}, 
\begin{align*}
\Abs{\Ip{\varphi_{j,T-\varepsilon^{\frac{1}{2H}u}}^{\varepsilon^{\frac{1}{2H}}u},f_{1}\otimes f_{2}}_{(\Hg^{d})^{\otimes 2}}}
  &\leq \Norm{\varphi_{j,T-\varepsilon^{\frac{1}{2H}u}}^{\varepsilon^{\frac{1}{2H}}u}}_{(\Hg^{d})^{\otimes 2}}\Norm{f_{1}}_{\Hg^{d}}\Norm{f_{2}}_{\Hg^{d}}\leq C_{H,T}\Norm{f_{1}}_{\Hg^{d}}\Norm{f_{2}}_{\Hg^{d}},
\end{align*}
for some constant $C_{H,T}>0$ only depending on $T$ and $H$. Therefore, applying the dominated convergence theorem in \eqref{eq:actionh}, we get
\begin{align}\label{eq:actionh2}
\lim_{\varepsilon\rightarrow0}\Ip{\widetilde{h}_{T},f_{1}\otimes f_{2}}_{(\Hg^{d})^{\otimes 2}}
  &=-\Lambda H^{2}(2H-1)^{2}\sum_{j=1}^{d}\int_{0}^{T}\prod_{i=1,2}\int_{0}^{T}\Abs{s-\eta}^{2H-2}f_{i}^{(j)}(\eta)d\eta ds,
\end{align}
and from the characterization \eqref{eq:picharact}, we conclude that $\widetilde{h}_{T}=-\Lambda\sum_{j=1}^{d}\pi_{T}^{j}$, as required. This finishes the proof of \eqref{conv:fdd2}, which, by \eqref{conv:L22}, implies that the convergence \eqref{conv:I2L2}.\\

It only remains to prove \eqref{conv:I2}. By \eqref{conv:I2L2}, it suffices to show the tightness property for $\varepsilon^{\frac{d}{2}-\frac{3}{2H}+1}(I_{T}^{\varepsilon}-\E\left[I_{T}^{\varepsilon}\right])$, which, as in the proof of \eqref{conv:I1}, can be reduced to proving that there exists $p>2$, such that for every $0\leq T_{1}\leq T_{2}\leq K$, 
\begin{align}\label{ineq:Zpfinal2}
\E\left[\Abs{\varepsilon^{\frac{d}{2}-\frac{3}{2H}+1}Z_{\varepsilon}}^{p}\right]
  &\leq C(T_{2}-T_{1})^{\frac{p}{2}},
\end{align} 
where $Z_{\varepsilon}$ is defined by \eqref{eq:Zdef}, and $C$ is some constant only depending on $d,H,K$ and $p$. Changing the coordinates $(x,u_{1},u_{2})$ by $(x,\varepsilon^{-\frac{1}{2H}}u_{1},\varepsilon^{-\frac{1}{2H}}u_{2})$ in \eqref{ineq:Ze}, and using the fact that 
$$\Theta_{\varepsilon}(x,\varepsilon^{\frac{1}{2H}}u_{1},\varepsilon^{\frac{1}{2H}}u_{2})=\varepsilon^{2}\Theta_{1}(\varepsilon^{-\frac{1}{2H}}x,u_{1},u_{2}),$$
we can easily check that 
\begin{align*}
\Norm{\varepsilon^{\frac{d}{2}-\frac{3}{2H}+1}Z_{\varepsilon}}_{L^{p}(\Omega)}^{2}
  &\leq C(T_{2}-T_{1})\int_{\R_{+}^2}\int_{0}^{T_{2}}\varepsilon^{-\frac{2}{H}}\mu(x,\varepsilon^{\frac{1}{2H}}u_{1},\varepsilon^{\frac{1}{2H}}u_{2})^2\nonumber\\
	&\times\Theta_{1}(\varepsilon^{-\frac{1}{2H}}x,u_{1},u_{2})^{-\frac{d}{p}}\int_{[0,1]^{2}}((1+\eta u_{1}^{2H})(1+\xi u_{2}^{2H}))^{-1-\frac{d}{2}+\frac{d}{p}}d\eta d\xi dx du_{1}du_{2},
\end{align*}
and hence, if $p>2$, we obtain 
\begin{align}\label{ineq:Ze2}
\Norm{\varepsilon^{\frac{d}{2}-\frac{3}{2H}+1}Z_{\varepsilon}}_{L^{p}(\Omega)}^{2}
  &\leq C(T_{2}-T_{1})\int_{\R_{+}^2}\int_{0}^{T_{2}}\varepsilon^{-\frac{2}{H}}\mu(x,\varepsilon^{\frac{1}{2H}}u_{1},\varepsilon^{\frac{1}{2H}}u_{2})^2(u_{1}u_{2})^{-2H}\nonumber\\
	&\times\Theta_{1}(\varepsilon^{-\frac{1}{2H}}x,u_{1},u_{2})^{-\frac{d}{p}}dx du_{1}du_{2}.
\end{align}
By Lemma \ref{lem:inegraltightness2}, if $T_{1},T_{2}\in[0,K]$, for some $K>0$, the integral in the right-hand side of the previous inequality is bounded by a constant only depending on $H,d,p$ and $K$. Relation \eqref{ineq:Zpfinal2} then follows from \eqref{ineq:Ze2}. This finishes the proof of the tightness property for $\varepsilon^{\frac{d}{2}-\frac{3}{2H}+1}(I_{T}^{\varepsilon}-\E\left[I_{T}^{\varepsilon}\right])$ in the case $H>\frac{3}{4}$.\\

\noindent\textbf{Proof of Theorem \ref{Teo:convergencelog}}\\
\noindent Finally we prove Theorem \ref{Teo:convergencelog}. First we show the convergence of the finite dimensional distributions, namely,   that for every $r\in\N$ and $T_{1},\dots, T_{r}\geq0$ fixed, it holds 
\begin{align}\label{eq:fddlog}
\frac{\varepsilon^{\frac{d}{2}-1}}{\sqrt{\log(1/\varepsilon)}}\left((I_{T_{1}}^{\varepsilon},\dots, I_{T_{r}}^{\varepsilon})-\E\left[(I_{T_{1}}^{\varepsilon},\dots, I_{T_{r}}^{\varepsilon})\right]\right)\stackrel{Law}{\rightarrow} \rho (W_{T_{1}},\dots, W_{T_{r}}),
\end{align}
where $\rho$ is defined by \eqref{eq:rhodeflog}.  Consider the random variable $\widetilde{J}_{T}^{\varepsilon}$ introduced in \eqref{eq:Jtildedef}.
By Lemma \ref{lem:chaos4tozero2}, we have 
\begin{align}\label{conv:secondchaosapprox}
\lim_{\varepsilon\rightarrow0}\frac{\varepsilon^{\frac{d}{2}-1}}{\sqrt{\log(1/\varepsilon)}}\Norm{I_{T}^{\varepsilon}-\E\left[I_{T}^{\varepsilon}\right]-I_{2}(h_{2,T}^{\varepsilon})}_{L^{2}(\Omega)}=0,
\end{align}
and by Lemma \ref{lem:finiteFGintegrallogaux}  
\begin{align}\label{conv:truncatedsecondchaos}
\lim_{\varepsilon\rightarrow0}\frac{\varepsilon^{\frac{d}{2}-1}}{\sqrt{\log(1/\varepsilon)}}\Norm{I_{2}(h_{2,T}^{\varepsilon})-\widetilde{J}_{T}^{\varepsilon}}_{L^{2}(\Omega)}=0.
\end{align} 
Consequently, 
$$\lim_{\varepsilon\rightarrow0}\frac{\varepsilon^{\frac{d}{2}-1}}{\sqrt{\log(1/\varepsilon)}}\Norm{I_{T}^{\varepsilon}-\E\left[I_{T}^{\varepsilon}\right]-\widetilde{J}_{T}^{\varepsilon}}_{L^{2}(\Omega)}=0,$$ 
and hence,  relation \eqref{eq:fddlog} is equivalent to 
\begin{align}\label{eq:fddlogp}
\frac{\varepsilon^{\frac{d}{2}-1}}{\sqrt{\log(1/\varepsilon)}}\left(\widetilde{J}_{T_{1}}^{\varepsilon},\dots, \widetilde{J}_{T_{r}}^{\varepsilon}\right)\stackrel{Law}{\rightarrow} \rho (W_{T_{1}},\dots, W_{T_{r}}).
\end{align}
By the Peccati-Tudor criterion, the convergence \eqref{eq:fddlogp} holds provided that $\widetilde{J}_{t}^{\varepsilon}$ satisfies the following conditions:
\begin{enumerate}
\item[(i)] For every $1\leq i,j\leq r$, 
$$\frac{\varepsilon^{d-2}}{\log(1/\varepsilon)}\E\left[\widetilde{J}_{T_{i}}^{\varepsilon}\widetilde{J}_{T_{j}}^{\varepsilon}\right]\rightarrow \rho^2(T_{i}\wedge T_{j}),~~~~\text{ as }~~\varepsilon\rightarrow0.$$
\item[(ii)] For all $i=1,\dots, r$, the random variables $\frac{\varepsilon^{\frac{d}{2}-1}}{\sqrt{\log(1/\varepsilon)}}\widetilde{J}_{T_{i}}^{\varepsilon}$ converge in law to a centered Gaussian distribution as $\varepsilon\rightarrow0$.
\end{enumerate}
Relation (i) follows from relation \eqref{conv:truncatedsecondchaos}, as well as Theorem \ref{teo:covchaoslog}. Hence, it suffices to check (ii). To this end, consider the following Riemann sum approximation for  $\widetilde{J}_{T}^{\varepsilon}$
\begin{align}\label{eq:RTMdef}
R_{T,M}^{\varepsilon}
  &:=-\frac{c_{log}\varepsilon^{\frac 23-\frac{d}{2}}}{2^{M}}\sum_{k=2}^{M2^{M}}
	\int_{0}^{T}
	\sum_{j=1}^{d}  \frac{u(k)^{\frac 32}} {(1+u(k)^{\frac 32}}
	H_{2}\left(\frac{B_{s+\varepsilon^{\frac{2}{3}}u(k)}^{(j)}-B_{s}^{(j)}}{\sqrt{\varepsilon}u(k)^{\frac{3}{4}}}\right)ds,
\end{align}
where $c_{log}:=\frac{(2\pi)^{-\frac{d}{2}}}{2}$ and  $u(k):=\frac{k}{2^{M}}$, for $k=2,\dots, M2^{M}$. We will prove that $\frac{\varepsilon^{\frac{d}{2}-1}}{\sqrt{\log(1/\varepsilon)}}(R_{T,M}^{\varepsilon}-\widetilde{J}_{T}^{\varepsilon})$ converges to zero, uniformly in $\varepsilon\in(0,1/e)$, and $\frac{\varepsilon^{\frac{d}{2}-1}}{\sqrt{\log(1/\varepsilon)}}R_{T,M}^{\varepsilon}\stackrel{Law}{\rightarrow} T\mathcal{N}(0,\widetilde{\rho}_{M}^2)$ as $\varepsilon\rightarrow0$ for some constant $\widetilde{\rho}_{M}^2$ satisfying 
$\widetilde{\rho}_{M}^2\rightarrow\rho^{2}$ as $M\rightarrow\infty$. The result will then follow by a standard approximation argument. We will separate the argument in the following steps.\\

\noindent \textit{Step I}\\
We prove that $\frac{\varepsilon^{\frac{d}{2}-1}}{\sqrt{\log(1/\varepsilon)}}(R_{T,M}^{\varepsilon}-\widetilde{J}_{T}^{\varepsilon})\rightarrow 0$ in $L^{2}(\Omega)$ as $M\rightarrow\infty$ uniformly in $\varepsilon\in(0,1/e)$, namely, 
\begin{align}\label{limunifR}
\lim_{M\rightarrow\infty}\sup_{\varepsilon\in(0,1/e)}\frac{\varepsilon^{\frac{d}{2}-1}}{\sqrt{\log(1/\varepsilon)}}\Norm{R_{T,M}^{\varepsilon}-\widetilde{J}_{T}^{\varepsilon}}_{L^{2}(\Omega)}=0.
\end{align}
For $\varepsilon\in(0,1/e)$ fixed, we decompose the term $\widetilde{J}_{T}^{\varepsilon}$ as
\begin{align}\label{eq:JT12}
\widetilde{J}_{T}^{\varepsilon}
  &=\widetilde{J}_{T,1}^{\varepsilon,M}+\widetilde{J}_{T,2}^{\varepsilon,M},
\end{align}
where 
\begin{align*}
\widetilde{J}_{T,1}^{\varepsilon,M}
  &:=-c_{log}\varepsilon^{\frac 32-\frac{d}{2}}\int_{0}^{T}\int_{2^{-M}}^{M}
	\sum_{j=1}^{d}\frac{u^{\frac{3}{2}}}{(1+u^{\frac{3}{2}})^{\frac{d}{2}+1}}H_{2}\left(\frac{B_{s+\varepsilon^{\frac{2}{3}}u}^{(j)}-B_{s}^{(j)}}{\sqrt{\varepsilon}u^{\frac{3}{4}}}\right)duds
\end{align*}
and 
\begin{align*}
\widetilde{J}_{T,2}^{\varepsilon,M}
  &:=-c_{log}\varepsilon^{\frac 32-\frac{d}{2}}\int_{0}^{T}\int_{0}^{\infty}\Indi{(0,2^{-M})\cup(M,\infty)}(u) \sum_{j=1}^{d}\frac{u^{\frac{3}{2}}}{(1+u^{\frac{3}{2}})^{\frac{d}{2}+1}}H_{2}\left(\frac{B_{s+\varepsilon^{\frac{2}{3}}u}^{(j)}-B_{s}^{(j)}}{\sqrt{\varepsilon}u^{\frac{3}{4}}}\right)duds.
\end{align*}
From \eqref{eq:JT12}, we deduce that the relation \eqref{limunifR} is equivalent to 
\begin{align}\label{eq:limunif012}
\lim_{M\rightarrow\infty}\sup_{\varepsilon\in(0,1/e)}\frac{\varepsilon^{\frac{d}{2}-1}}{\sqrt{\log(1/\varepsilon)}}\Norm{R_{T,M}^{\varepsilon}-\widetilde{J}_{T,1}^{\varepsilon,M}}_{L^{2}(\Omega)}=0,
\end{align}
provided that 
\begin{align}\label{eq:limunif01}
\lim_{M\rightarrow\infty}\sup_{\varepsilon\in(0,1/e)}\frac{\varepsilon^{\frac{d}{2}-1}}{\sqrt{\log(1/\varepsilon)}}\Norm{\widetilde{J}_{T,2}^{\varepsilon,M}}_{L^{2}(\Omega)}=0.
\end{align}
To prove \eqref{eq:limunif01} we proceed as follows. First we use the relation \eqref{eq:Hermiteisometry}
to write
\begin{align*}
\frac{\varepsilon^{d-2}}{\log(1/\varepsilon)}\Norm{\widetilde{J}_{T,2}^{\varepsilon,M}}_{L^{2}(\Omega)}^2
  &=\frac{2dc_{log}^2}{\log(1/\varepsilon)}\int_{[0,T]^2}\int_{[0,\varepsilon^{-\frac{2}{3}}T]}\prod_{i=1,2}\Indi{(0,2^{-M})\cup(M,\infty)}(u_{i})\\
	&\times \psi(u_1,u_2)\varepsilon^{-8/3}\mu(s_{2}-s_{1},\varepsilon^{\frac{2}{3}}u_{1},\varepsilon^{\frac{2}{3}}u_{1})^2ds_{1}ds_{2}du_{1}du_{2},
\end{align*}
where $\psi(u_1,u_2)$ is defined by \eqref{eq:Kdef}. Changing the coordinates $(s_{1},s_{2},u_{1},u_{2})$ by $(s:=s_{1},x:=\varepsilon^{-\frac{2}{3}}(s_{2}-s_{1}),u_{1},u_{2})$ when $s_{1}\leq s_{2}$, and by $(s:=s_{2},x:=\varepsilon^{-\frac{2}{3}}(s_{1}-s_{2}),u_{1},u_{2})$ when $s_{1}\geq  s_{2}$, integrating the variable $s$, and using the identity $\mu(\varepsilon^{\frac{2}{3}}x,\varepsilon^{\frac{2}{3}}u_{1},\varepsilon^{\frac{2}{3}}u_{2})^2=\varepsilon^2\mu(x,u_{1},u_{2})$, we get 
\begin{align}\label{eq:renormJtilde}
\frac{\varepsilon^{d-2}}{\log(1/\varepsilon)}\Norm{\widetilde{J}_{T,2}^{\varepsilon,M}}_{L^{2}(\Omega)}^2
	&\leq\frac{4Tdc_{log}^2}{\log(1/\varepsilon)}\int_{[0,\varepsilon^{-\frac{2}{3}}T]^3}\prod_{i=1,2}\Indi{(0,2^{-M})\cup(M,\infty)}(u_{i})
	G_{1,x}^{(1)}(u_{1},u_{2})dx du_{1}du_{2},
\end{align}
where the function $G_{1,x}^{(1)}(u_{1},u_{2})$ is defined by \eqref{eq:Gdef}. Define the regions $\Sc_{i}$ by \eqref{eq:Scdef}. Splitting the domain of integration of the right-hand side of \eqref{eq:renormJtilde} into $[0,T]^{3}=\bigcup_{i=1}^{3}([0,\varepsilon^{-\frac{2}{3}}T]^3\cap \Sc_{i})$,  we obtain
\begin{align*}
\frac{\varepsilon^{d-2}}{\log(1/\varepsilon)}\Norm{\widetilde{J}_{T,2}^{\varepsilon,M}}_{L^{2}(\Omega)}^2
	&\leq\frac{4Tdc_{log}^2}{\log(1/\varepsilon)}\sum_{i=1}^{3}\int_{[0,\varepsilon^{-\frac{2}{3}}T]^3}\Indi{\Sc_{i}}(x,u_{1},u_{2})\\
	&\times \prod_{i=1,2}\Indi{(0,2^{-M})\cup(M,\infty)}(u_{i})G_{1,x}^{(1)}(u_{1},u_{2})dx du_{1}du_{2},
\end{align*}
and hence, dropping the normalization term $\frac{1}{\log(1/\varepsilon)}$ in the regions $\Sc_{1},\Sc_{2}$, we obtain
\begin{align*}
\frac{\varepsilon^{d-2}}{\log(1/\varepsilon)}\Norm{\widetilde{J}_{T,2}^{\varepsilon,M}}_{L^{2}(\Omega)}^2
  &\leq \frac{4Tdc_{log}^2}{\log(1/\varepsilon)}\int_{[0,\varepsilon^{-\frac{2}{3}}T]^3}\Indi{\Sc_{3}}(x,u_{1},u_{2})\\
	&\times \prod_{i=1,2}\Indi{(0,2^{-M})\cup(M,\infty)}(u_{i})G_{1,x}^{(1)}(u_{1},u_{2})dx du_{1}du_{2}\\
	&+4Tdc_{log}^2\sum_{i=1}^{2}\int_{[0,\varepsilon^{-\frac{2}{3}}T]^3}\Indi{\Sc_{i}}(x,u_{1},u_{2})\\
	&\times \prod_{i=1,2}\Indi{(0,2^{-M})\cup(M,\infty)}(u_{i})G_{1,x}^{(1)}(u_{1},u_{2})dx du_{1}du_{2}.
\end{align*}
The integrands corresponding to  $i=1,2$ converge pointwise to zero as $M\rightarrow\infty$, and are bounded by the functions $\Indi{\Sc_{i}}(x,u_{1},u_{2})G_{1,x}^{(1)}(u_{1},u_{2})$, which, by relations \eqref{eq:Fidentitychaos} and \eqref{ineq:FTheta}, are in turn bounded by 
\begin{align}\label{eq:boundforGtheta}
\Indi{\Sc_{i}}(x,u_{1},u_{2})C\frac{\mu(x,u_{1},u_{2})^2}{(u_{1}u_{2})^{2H}}\Theta_{1}(x,u_{1},u_{2})^{-\frac{d}{2}},
\end{align}
for some constant $C>0$. In addition, by Lemma \ref{lem:inegraltightness}, the function \eqref{eq:boundforGtheta} is integrable for $i=1,2$, and hence, by the dominated convergence theorem, 
\begin{align}\label{eq:unifboundlogR3M}
\limsup_{M\rightarrow\infty}\sup_{\varepsilon\in(0,1/e)}\frac{\varepsilon^{d-2}}{\log(1/\varepsilon)}\Norm{\widetilde{J}_{T,2}^{\varepsilon,M}}_{L^{2}(\Omega)}^2
  &\leq \limsup_{M\rightarrow\infty}\sup_{\varepsilon\in(0,1/e)}\frac{4Tdc_{log}^2}{\log(1/\varepsilon)}\int_{[0,\varepsilon^{-\frac{2}{3}}T]^3}\Indi{\Sc_{3}}(x,u_{1},u_{2})\\
		&\times \prod_{i=1,2}\Indi{(0,2^{-M})\cup(M,\infty)}(u_{i})G_{1,x}^{(1)}(u_{1},u_{2})dx du_{1}du_{2}.\nonumber
\end{align}
On the other hand, by equation \eqref{eq:inequR321} in Lemma \ref{ineq:techR}, 
we deduce that there exists a constant $C>0$, such that for every $(x,u_{1},u_{2})\in\Sc_{3}$, 
\begin{align}\label{eq:G2boundlog}
G_{1,x}^{(1)}(u_{1},u_{2})
  &\leq C(x+u_{1}+u_{2})^{-1}(u_{1}u_{2})^{2}\psi(u_1,u_2). 
\end{align}
Therefore, from \eqref{eq:unifboundlogR3M} we deduce that
\begin{multline*}
\limsup_{M\rightarrow\infty}\sup_{\varepsilon\in(0,1/e)}\frac{\varepsilon^{d-2}}{\log(1/\varepsilon)}\Norm{\widetilde{J}_{T,2}^{\varepsilon,M}}_{L^{2}(\Omega)}^2\\
\begin{aligned}
  &\leq \limsup_{M\rightarrow\infty}\sup_{\varepsilon\in(0,1/e)}\frac{4Cdc_{log}^2T}{\log(1/\varepsilon)}\int_{0}^{\varepsilon^{-\frac{2}{3}}T}\int_{\R_{+}^{2}}\prod_{i=1,2}\Indi{(0,2^{-M})\cup(M,\infty)}(u_{i})\\
	&\times (x+u_{1}+u_{2})^{-1}(u_{1}u_{2})^{2}\psi(u_1,u_2) du_{1}du_{2}dx,
\end{aligned}
\end{multline*}
so that there exists a constant $C>0$ such that
\begin{multline*}
\limsup_{M\rightarrow\infty}\sup_{\varepsilon\in(0,1/e)}\frac{\varepsilon^{d-2}}{\log(1/\varepsilon)}\Norm{\widetilde{J}_{T,2}^{\varepsilon,M}}_{L^{2}(\Omega)}^2\\
\begin{aligned}
  &\leq \limsup_{M\rightarrow\infty}\sup_{\varepsilon\in(0,1/e)}CT\int_{\R_{+}^{2}}\prod_{i=1,2}\Indi{(0,2^{-M})\cup(M,\infty)}(u_{i})\psi(u_1,u_2)\\
	&\times \left(\frac{\log(\varepsilon^{-\frac{2}{3}}T+u_{1}+u_{2})-\log(u_{1}+u_{2})}{\log(1/\varepsilon)}\right)(u_{1}u_{2})^{2}du_{1}du_{2}=0,
\end{aligned}
\end{multline*}
where the last equality easily follows from the dominated convergence theorem. This finishes the proof of \eqref{eq:limunif01}.

To prove \eqref{eq:limunif012} we proceed as follows.  Define the intervals $I_{k}:=(\frac{k-1}{2^{M}},\frac{k}{2^{M}}]$. Then, we can write $R_{T,M}^{\varepsilon}$ and $\widetilde{J}_{T,1}^{M}$, as 
\begin{align*}
R_{T,M}^{\varepsilon}
  &=-\sum_{k=2}^{M2^{M}}c_{log}\varepsilon^{\frac 32 -\frac{d}{2}}
	\int_{0}^{T}\int_{\R_{+}}
	\sum_{j=1}^{d}\Indi{I_{k}}(u)  \frac {u(k)^{\frac 32}}{(1+u(k)^{\frac 32})^{\frac d2+1}}
	 H_{2}\left(\frac{B_{s+\varepsilon^{\frac{2}{3}}u(k)}^{(j)}-B_{s}^{(j)}}{\sqrt{\varepsilon}u(k)^{\frac{3}{4}}}\right)duds,
\end{align*}
and 
\begin{align*}
\widetilde{J}_{T,1}^{\varepsilon,M}
  &=	-\sum_{k=2}^{M2^{M}}c_{log}\varepsilon^{\frac 32-\frac{d}{2}}
	\int_{0}^{T}\int_{\R_{+}}
	\sum_{j=1}^{d}\Indi{I_{k}}(u)    \frac {u(k)^{\frac 32}}{(1+u(k)^{\frac 32})^{\frac d2+1}}H_{2}\left(\frac{B_{s+\varepsilon^{\frac{2}{3}}u}^{(j)}-B_{s}^{(j)}}{\sqrt{\varepsilon}u^{\frac{3}{4}}}\right)duds.
\end{align*}
Notice that by \eqref{eq:Hermiteisometry}, 
\begin{align*}
\E\left[H_{2}\left(\frac{B_{s_{1}+\varepsilon^{\frac{2}{3}}v_{1}}-B_{s_{1}}}{\sqrt{\varepsilon}v_{1}^{\frac{3}{4}}}\right)H_{2}\left(\frac{B_{s_{1}+\varepsilon^{\frac{2}{3}}v_{2}}-B_{s_{2}}}{\sqrt{\varepsilon}v_{2}^{\frac{3}{4}}}\right)\right]
	&=2(v_{1}v_{2})^{-\frac{3}{2}}\mu(\varepsilon^{-\frac{2}{3}}(s_{2}-s_{1}),v_{1},v_{2})^2,
\end{align*}
and hence,
\begin{align*}
\frac{\varepsilon^{d-2}}{\log(1/\varepsilon)}\Norm{\widetilde{J}_{T,1}^{\varepsilon,M}-R_{T,M}^{\varepsilon}}_{L^{2}(\Omega)}^2
  &=\frac{2dc_{log}^2}{\log(1/\varepsilon)}\int_{[0,T]^{2}}\int_{\R_{+}^{2}}\sum_{k_{1},k_{2}=2}^{M2^{M}}\Indi{I_{k_{1}}}(u_{1})\Indi{I_{k_{2}}}(u_{2})\\
	&\times \varepsilon^{-\frac{2}{3}}A_{k_{1},k_{2}}^{M}(\varepsilon^{-\frac{2}{3}}(s_{2}-s_{1}),u_{1},u_{2})ds_{1}ds_{2}du_{1}du_{2}, 
\end{align*}
where the function $A_{k_{1},k_{2}}^{M}(x,u_{1},u_{2})$ is defined by 
\begin{align*}
A_{k_{1},k_{2}}^{M}(x,u_{1},u_{2})
  &:= \big(G_{1,x}^{(1)}(u_{1},u_{2})-G_{1,x}^{(1)}(u(k_{1}),u_{2})\\
	&\ \ -G_{1,x}^{(1)}(u_{1},u(k_{2}))+G_{1,x}^{(1)}(u(k_{1}),u(k_{2}))\big).
\end{align*}
Changing the coordinates $(s_{1},s_{2},u_{1},u_{2})$ by $(s:=s_{1},x:=\varepsilon^{-\frac{2}{3}}(s_{2}-s_{1}),u_{1},u_{1})$ in the case $s_{2}\geq s_{1}$ and by $(s:=s_{2},x:=\varepsilon^{-\frac{2}{3}}(s_{1}-s_{2}),u_{1},u_{1})$ in the case $s_{1}\geq s_{2}$, and integrating the variable $s$, we deduce that there exists a constant $C>0$, such that 
\begin{align}\label{ineq:JRA}
\frac{\varepsilon^{d-2}}{\log(1/\varepsilon)}\Norm{\widetilde{J}_{T,1}^{\varepsilon,M}-R_{T,M}^{\varepsilon}}_{L^{2}(\Omega)}^2
  &\leq \frac{CT}{\log(1/\varepsilon)}\int_{0}^{\varepsilon^{-\frac{2}{3}}T}\int_{\R_{+}^2}\sum_{k_{1},k_{2}=2}^{M2^{M}}\Indi{I_{k_{1}}}(u_{1})\Indi{I_{k_{2}}}(u_{2})\nonumber\\
	&\times \Abs{A_{k_{1},k_{2}}^{M}(x,u_{1},u_{2})}du_{1}du_{2}dx.
\end{align}
In order to bound the term $\Abs{A_{k_{1},k_{2}}^{M}(x,u_{1},u_{2})}$ we proceed as follows. Consider the function 
\begin{align*}
D_{x}^{M}(u_{1},u_{2})
  &:=\psi(u_{1}-2^{-M},u_{2}-2^{-M})\mu(x,u_{1}+2^{-M})^{2}\\
	&-\psi(u_{1}+2^{-M},u_{2}+2^{-M})\mu(x,u_{1}-2^{-M})^{2}, 
\end{align*}
where $\psi(u_1,u_2)$ is defined by \eqref{psi}. By relation \eqref{eq:IpHbig}, we have that 
\begin{align}\label{mudecomplog}
\mu(x,u_{1},u_{2})
  &=\frac{3}{8}\int_{0}^{u_{1}}\int_{x}^{x+u_{2}}\Abs{v_{1}-v_{2}}^{-\frac{1}{2}}dv_{1}dv_{2}\nonumber\\
	&=\frac{3u_{1}u_{2}}{8}\int_{[0,1]^2}\Abs{x+v_{2}u_{2}-v_{1}u_{1}}^{-\frac{1}{2}}dv_{1}dv_{2},
\end{align}
and consequently, $\mu(x,u_{1},u_{2})\leq \mu(x,v_{1},v_{2})$ for every  $u_{1}\leq v_{1}$ and $u_{2}\leq v_{2}$. Using this observation, we can easily show that for every $v_{1}\in[u_{1}-2^{-M},u_{1}+2^{-M}]$ and $v_{2}\in[u_{2}-2^{-M},u_{2}+2^{-M}]$, the following inequality holds
\begin{multline*}
\psi(u_{1}+2^{-M},u_{2}+2^{-M})^{-\frac{d}{2}}\mu(x,u_{1}-2^{-M})^{2}\\
\leq G_{1,x}^{(1)}(v_{1},v_{2})\leq \psi(u_{1}-2^{-M},u_{2}-2^{-M})^{-\frac{d}{2}}\mu(x,u_{1}+2^{-M})^{2}.
\end{multline*}
Hence, for every $u_{1}\in I_{k_{1}}$ and $u_{2}\in I_{k_{2}}$, 
\begin{align}\label{ineq:AD}
\Abs{A_{k_{1},k_{2}}^{M}(u_{1},u_{2})}
  &\leq 2D_{x}^{M}(u_{1},u_{2}).
\end{align}
Using relations \eqref{ineq:JRA} and \eqref{ineq:AD}, as well as the fact that 
$$\sum_{k_{1},k_{2}=2}^{M2^{M}}\Indi{I_{k_{1}}}(u_{1})\Indi{I_{k_{2}}}(u_{2})=\Indi{[2^{-M},M]^2}(u_{1},u_{2}),$$
we obtain 
\begin{align}\label{ineq:JRA21}
\frac{\varepsilon^{d-2}}{\log(1/\varepsilon)}\Norm{\widetilde{J}_{T,1}^{\varepsilon,M}-R_{T,M}^{\varepsilon}}_{L^{2}(\Omega)}^2
  &\leq \frac{CT}{\log(1/\varepsilon)}\int_{0}^{\varepsilon^{-\frac{2}{3}}T}\int_{\R_{+}^2}\Indi{[2^{-M},M]^2}(u_{1},u_{2})D_{x}^{M}(u_{1},u_{2})du_{1}du_{2}dx.
\end{align}
To bound the integral in the right-hand side we proceed as follows. Define $N:=\varepsilon^{-\frac{2}{3}}$, so that $\log(1/\varepsilon)=\frac{3\log N}{2}$. Then, applying L'H\^opital's rule in \eqref{ineq:JRA21}, we deduce that there is a constant $C>0$, such that
\begin{multline}\label{ineq:JRA212}
\limsup_{\varepsilon\rightarrow0}\frac{\varepsilon^{d-2}}{\log(1/\varepsilon)}\Norm{\widetilde{J}_{T,1}^{\varepsilon,M}-R_{T,M}^{\varepsilon}}_{L^{2}(\Omega)}^2\\
\begin{aligned}
  &\leq \limsup_{N\rightarrow\infty}\frac{CT}{\log N}\int_{0}^{NT}\int_{\R_{+}^2}\Indi{[2^{-M},M]^2}(u_{1},u_{2})D_{x}^{M}(u_{1},u_{2})du_{1}du_{2}dx\\
	&= \limsup_{N\rightarrow\infty}CT\int_{\R_{+}^2}\Indi{[2^{-M},M]^2}(u_{1},u_{2})NTD_{NT}^{M}(u_{1},u_{2})du_{1}du_{2}.
\end{aligned}
\end{multline}
On the other hand, using \eqref{mudecomplog} and equation \eqref{eq:inequR321} in Lemma \ref{ineq:techR}, we get that for every $(x,u_{1},u_{2})\in\Sc_{3}$,
\begin{align}\label{eq:limxmu}
\lim_{x\rightarrow\infty}x\mu(x,u_{1},u_{2})^{2}
  &=\frac{3^2u_{1}^2u_{2}^2}{2^{6}},
\end{align}
and 
\begin{align*}
x\mu(x,u_{1},u_{2})^{2}
  &\leq x(x+u_{1}+u_{2})^{-1}(u_{1}u_{2})^2\leq (u_{1}u_{2})^2.
\end{align*}
Hence, by applying the dominated convergence theorem in \eqref{ineq:JRA212}, we deduce that there is a constant $C>0$, such that
\begin{multline}\label{eq:limlogRiemm}
\limsup_{\varepsilon\rightarrow0}\frac{\varepsilon^{d-2}}{\log(1/\varepsilon)}\Norm{\widetilde{J}_{T,1}^{\varepsilon,M}-R_{T,M}^{\varepsilon}}_{L^{2}(\Omega)}^2\\
  \leq CT\int_{\R_{+}^{2}}\Indi{[2^{-M},M]^2}(u_{1},u_{2})\bigg(\psi(u_{1}-2^{-M},u_{2}-2^{-M})((u_{1}+2^{-M})(u_{1}+2^{-M}))^{2}\\
  -\psi(u_{1}+2^{-M},u_{2}+2^{-M})((u_{1}-2^{-M})(u_{1}-2^{-M}))^{2}\bigg)du_{1}du_{2}.
\end{multline}
Let $M_{0}\in\N$ and $\delta>0$ be fixed. Using the fact that integrands in \eqref{eq:limlogRiemm} are decreasing on $M$ and 
$$\sum_{k_{1},k_{2}=2}^{M_{0}2^{M_{0}}}\Indi{I_{k_{1}}}(x_{1})\Indi{I_{k_{2}}}(x_{2})=\Indi{[2^{-M_{0}},M_{0}]}(x_{1})\Indi{[2^{-M_{0}},M_{0}]}(x_{2})\leq 1,$$
we can easily check from the definition of the convergence \eqref{eq:limlogRiemm}, that there exists $\gamma=\gamma(M_{0},\delta)>0$ such that for every $M>M_{0}$, the following inequality holds
\begin{multline}\label{ineq:D0gamma}
\sup_{\varepsilon\in(0,\gamma)}\frac{\varepsilon^{d-2}}{\log(1/\varepsilon)}\Norm{\widetilde{J}_{T,1}^{\varepsilon,M}-R_{T,M}^{\varepsilon}}_{L^{2}(\Omega)}^2\\
  \leq\delta+CT\int_{\R_{+}^{2}}\bigg(\psi(u_{1}-2^{-M_{0}},u_{2}-2^{-M_{0}})((u_{1}+2^{-M_{0}})(u_{1}+2^{-M_{0}}))^{2}\\
  -\psi(u_{1}+2^{-M_{0}},u_{2}+2^{-M_{0}})((u_{1}-2^{-M_{0}})(u_{1}-2^{-M_{0}}))^{2}\bigg).
\end{multline}
To handle the term $\sup_{\varepsilon\in(\gamma,1/e)}\frac{\varepsilon^{d-2}}{\log(1/\varepsilon)}\Norm{\widetilde{J}_{T,1}^{\varepsilon,M}-R_{T,M}^{\varepsilon}}_{L^{2}(\Omega)}^2$, we use \eqref{ineq:JRA21} to get 
\begin{align}\label{ineq:D0gammac2}
\sup_{\varepsilon\in(\gamma,1/e)}\frac{\varepsilon^{d-2}}{\log(1/\varepsilon)}\Norm{\widetilde{J}_{T,1}^{\varepsilon,M}-R_{T,M}^{\varepsilon}}_{L^{2}(\Omega)}^2
  \leq CT\int_{0}^{\gamma^{-\frac{2}{3}}T}\int_{\R_{+}^2}\Indi{[2^{-M},M]^2}(u_{1},u_{2})D_{x}^{M}(u_{1},u_{2})du_{1}du_{2}dx.
\end{align}
From \eqref{ineq:D0gamma} and \eqref{ineq:D0gammac2}, we conclude that there exists a constant $C>0$, only depending on $T$, such that for every $M>M_{0}$,
\begin{multline}\label{ineq:D0gammaf}
\sup_{\varepsilon\in(0,1/e)}\frac{\varepsilon^{d-2}}{\log(1/\varepsilon)}\Norm{\widetilde{J}_{T,1}^{\varepsilon,M}-R_{T,M}^{\varepsilon}}_{L^{2}(\Omega)}^2\\
\begin{aligned}
  &\leq\delta+CT\int_{\R_{+}^{2}}\sum_{k_{1},k_{2}=2}^{M_{0}2^{M_{0}}}\bigg(\psi(u_{1}-2^{-M_{0}},u_{2}-2^{-M_{0}})((u_{1}+2^{-M_{0}})(u_{1}+2^{-M_{0}}))^{2}\\
  &-\psi(u_{1}+2^{-M_{0}},u_{2}+2^{-M_{0}})((u_{1}-2^{-M_{0}})(u_{1}-2^{-M_{0}}))^{2}\bigg)\\
	&+CT\int_{0}^{\gamma^{-\frac{2}{3}}T}\int_{\R_{+}^2}\Indi{[2^{-M},M]^2}(u_{1},u_{2})D_{x}^{M}(u_{1},u_{2})du_{1}du_{2}dx.
\end{aligned}
\end{multline}
Taking first the limit as $M\rightarrow\infty$ and then as $M_{0}\rightarrow\infty$ in \eqref{ineq:D0gammaf}, and applying the dominated convergence theorem, we get
\begin{align*}
\limsup_{M\rightarrow\infty}\sup_{\varepsilon\in(0,1)}\frac{\varepsilon^{d-2}}{\log(1/\varepsilon)}\Norm{\widetilde{J}_{T,1}^{\varepsilon,M}-R_{T,M}^{\varepsilon}}_{L^{2}(\Omega)}^2  &\leq\delta.
\end{align*}
Relation \eqref{eq:limunif012} is then obtained by taking $\delta\rightarrow0$ in the previous inequality.\\

\noindent\textit{Step II}\\
Next we prove that 
\begin{align}\label{eq:Rcovlim}
\lim_{\varepsilon\rightarrow0}\frac{\varepsilon^{d-2}}{\log(1/\varepsilon)}\E\left[(R_{T,M}^{\varepsilon})^2\right]=T\widetilde{\rho}_{M}^2,
\end{align}
where $\widetilde{\rho}_{M}$ is given by 
\begin{align}\label{eq:rhotildeM}
\widetilde{\rho}_{M}
&=\frac{\sqrt{3d}}{2^{\frac{d+5}{2}}\pi^{\frac{d}{2}}2^{M}}\sum_{k=2}^{M2^{M}}(1+u(k)^{\frac{3}{2}})^{-\frac{d}{2}-1}u(k)^{2},
\end{align}
and $u(k)=\frac{k}{2^{M}}$. Notice that in particular, $\widetilde{\rho}_{M}^2$ satisfies 
\begin{align*}
\lim_{M\rightarrow\infty}\widetilde{\rho}_{M}^2
&=\rho^{2},
\end{align*}
where $\rho^{2}$ is defined by \eqref{eq:rhodeflog}. To prove \eqref{eq:rhotildeM} we proceed as follows. Recall that the constant $c_{log}$ is defined by $c_{log}=\frac{(2\pi)^{-\frac{d}{2}}}{2}$. Then, from the definition of $R_{T,M}^{\varepsilon}$ (see equation \eqref{eq:RTMdef}), it easily follows that
\begin{align*}
\frac{\varepsilon^{d-2}}{\log(1/\varepsilon)}\E\left[(R_{T,M}^{\varepsilon})^2\right]
  &=\frac{2dc_{log}^2}{\log(1/\varepsilon)2^{2M}}\int_{[0,T]^2}\sum_{k_{1},k_{2}=2}^{M2^{M}}\varepsilon^{-\frac{2}{3}}G_{1,\varepsilon^{-\frac{2}{3}}(s_{2}-s_{1})}^{(1)}(u(k_{1}),u(k_{2}))ds_{1}ds_{2}.
\end{align*}
Changing the coordinates $(s_{1},s_{2})$ by $(s_{1},x:=s_{2}-s_{1})$, and then integrating the variable $s_{1}$, we get 
\begin{align*}
\frac{\varepsilon^{d-2}}{\log(1/\varepsilon)}\E\left[(R_{T,M}^{\varepsilon})^2\right]
  &=\frac{4dc_{log}^2}{\log(1/\varepsilon)2^{2M}}\int_{0}^{T}T\sum_{k_{1},k_{2}=2}^{M2^{M}}\varepsilon^{-\frac{2}{3}}G_{1,\varepsilon^{-\frac{2}{3}}x}^{(1)}(\varepsilon^{\frac{2}{3}}u(k_{1}),\varepsilon^{\frac{2}{3}}u(k_{2}))dx\\
	&-\frac{4dc_{log}^2}{\log(1/\varepsilon)2^{2M}}\int_{0}^{T}x\sum_{k_{1},k_{2}=2}^{M2^{M}}\varepsilon^{-\frac{2}{3}}G_{1,\varepsilon^{-\frac{2}{3}}x}^{(1)}(\varepsilon^{\frac{2}{3}}u(k_{1}),\varepsilon^{\frac{2}{3}}u(k_{2}))dx.
\end{align*}
Using relation \eqref{eq:Glogdisjoint2} as well as the Cauchy-Schwarz inequality $\mu(x,u_{1},u_{2})\leq (u_{1}u_{2})^{\frac{3}{4}}$, we can easily deduce that there exists a constant $C>0$, depending on $u_{1},\dots, u_{M^{2M}}$, but not on $x$ or $\varepsilon$, such that 
$$G_{1,\varepsilon^{-\frac{2}{3}}x}^{(1)}(u(k_{1}),u(k_{2}))\leq C \varepsilon^{\frac{2}{3}}x^{-1},$$ 
and hence,   
\begin{align*}
\lim_{\varepsilon\rightarrow0}\frac{1}{\log(1/\varepsilon)}\int_{0}^{T}x\sum_{k_{1},k_{2}=2}^{M2^{M}}\varepsilon^{-\frac{2}{3}}G_{1,\varepsilon^{-\frac{2}{3}}x}^{(1)}(u(k_{1}),u(k_{2}))dx=0,
\end{align*}
which implies that 
\begin{align*}
\lim_{\varepsilon\rightarrow0}\frac{\varepsilon^{d-2}}{\log(1/\varepsilon)}\E\left[(R_{T,M}^{\varepsilon})^2\right]
  &=\lim_{\varepsilon\rightarrow0}\frac{4dc_{log}^2T}{\log(1/\varepsilon)2^{2M}}\int_{0}^{T}\sum_{k_{1},k_{2}=2}^{M2^{M}}\varepsilon^{-\frac{2}{3}}G_{1,\varepsilon^{-\frac{2}{3}}x}(u(k_{1}),u(k_{2}))dx\\
	&=\lim_{\varepsilon\rightarrow0}\frac{4dc_{log}^2T}{\log(1/\varepsilon)2^{2M}}\int_{0}^{\varepsilon^{-\frac{2}{3}}T}\sum_{k_{1},k_{2}=2}^{M2^{M}}G_{1,x}(u(k_{1}),u(k_{2}))dx,
\end{align*}
where the last equality follows by making the change of variables $\widetilde{x}:=\varepsilon^{-\frac{2}{3}}x$. Hence, writing $N:=\varepsilon^{-\frac{2}{3}}$, so that $\log(1/\varepsilon)=\frac{2 \log N}{3}$, and using L'H\^opital's rule, we get 
\begin{align}\label{eq:varlimRTM}
\lim_{\varepsilon\rightarrow0}\frac{\varepsilon^{d-2}}{\log(1/\varepsilon)}\E\left[(R_{T,M}^{\varepsilon})^2\right]
  &=\lim_{N\rightarrow\infty}\frac{8dc_{log}^2T}{3\log N2^{2M}}\int_{0}^{NT}\sum_{k_{1},k_{2}=2}^{M2^{M}}G_{1,x}^{(1)}(u(k_{1}),u(k_{2}))dx\nonumber\\
	&=\lim_{N\rightarrow\infty}\frac{8dc_{log}^2T}{3\cdot2^{2M}}\sum_{k_{1},k_{2}=2}^{M2^{M}}NTG_{1,NT}^{(1)}(u(k_{1}),u(k_{2}))dx=\widetilde{\rho}_{M}^2,
\end{align}
where the last identity follows from \eqref{eq:Gdef} and \eqref{eq:limxmu}. This finishes the proof of \eqref{eq:Rcovlim}.

\noindent\textit{Step III}\\
Next we prove the convergence in law of $\frac{\varepsilon^{\frac{d}{2}-1}}{\sqrt{\log(1/\varepsilon)}}\widetilde{J}_{T}^{\varepsilon}$ to a Gaussian random variable with variance $\rho^{2}$. From Steps I and II, it suffices to show that 
\begin{align}\label{eq:RTMtoGaussian}
R_{T,M}^{\varepsilon}
  &\stackrel{Law}{\rightarrow}\mathcal{N}(0,\widetilde{\rho}_{M}^{2}),\ \ \ \ \ \text{ as }\varepsilon\rightarrow0,
\end{align}

In order to prove \eqref{eq:RTMtoGaussian} we proceed as follows. Define the random vector 
\begin{align*}
D^{\varepsilon}
  &=\left(D_k^{\varepsilon}\right)_{k=2}^{M2^{M}},
\end{align*}
where
\begin{align*}
D_k^{\varepsilon}
  &:=-\frac{c_{log}u(k)^{\frac{3}{2}}}{2^{M}(1+u(k)^{\frac{3}{2}})^{\frac{d}{2}+1}}
 \sum_{j=1}^{d}\frac{1}{\varepsilon^{\frac{1}{3}}\sqrt{\log(1/\varepsilon)}}\int_{0}^{T}H_{2}\left(\frac{B_{s+\varepsilon^{\frac{2}{3}}u(k)}^{(j)}-B_{s}^{(j)}}{\sqrt{\varepsilon}u(k)^{\frac{3}{4}}}\right)ds,
\end{align*}
and $c_{log}=\frac{(2\pi)^{-\frac{d}{2}}}{2}$. Notice that 
$$\frac{\varepsilon^{\frac{d}{2}-1}}{\sqrt{\log(\varepsilon)}}R_{T,M}^{\varepsilon}
  =\sum_{k=2}^{M2^{M}}D_{k}^{\varepsilon}.$$
We will prove that $D^{\varepsilon}$ converges to a centered Gaussian vector. By the Peccati-Tudor criterion (see \cite{PeTu}), it suffices to prove that the components of the vector $D^{\varepsilon}$ converge to a Gaussian distribution, and the covariance matrix of $D^{\varepsilon}$ is convergent. To prove the former statement,  define 
\begin{align*}
\Psi_{k_{1},k_{2}}^{j}(\varepsilon):=\E\left[\int_{0}^{T}H_{2}\left(\frac{B_{s_{1}+\varepsilon^{\frac{2}{3}}u(k_{1})}^{(j)}-B_{s_{1}}^{(j)}}{\sqrt{\varepsilon}u(k_{1})^{\frac{3}{4}}}\right)ds_{1}\int_{0}^{T}H_{2}\left(\frac{B_{s_{2}+\varepsilon^{\frac{2}{3}}u(k_{2})}^{(j)}-B_{s_{2}}^{(j)}}{\sqrt{\varepsilon}u(k_{2})^{\frac{3}{4}}}\right)ds_{2}\right].
\end{align*}
Proceeding as in the proof of \eqref{eq:varlimRTM}, we can show that for $2\leq k_{1},k_{2}\leq M 2^{M}$,  
\begin{align*}
\Psi_{k_{1},k_{2}}^{j}(\varepsilon)
  &=\frac{2(u(k_{1})u(k_{2}))^{-\frac{3}{2}}}{\varepsilon^{\frac{8}{3}}\log(1/\varepsilon)}\int_{[0,T]^2}\mu(s_{2}-s_{1},\varepsilon^{\frac{2}{3}}u(k_{1}),\varepsilon^{\frac{2}{3}}u(k_{2}))^2ds_{1}ds_{2}\\
	&=\frac{8(u(k_{1})u(k_{2}))^{-\frac{3}{2}}}{3\log(\varepsilon^{-\frac{2}{3}})}\int_{0}^{\varepsilon^{-\frac{2}{3}}T}\int_{0}^{T-\varepsilon^{\frac{2}{3}}x}\mu(x,u(k_{1}),u(k_{2}))^2dsdx.
\end{align*}
As in the proof of \eqref{eq:varlimRTM}, we can use L'H\^opital's rule, \eqref{eq:limxmu} and the previous identity, to get
\begin{align*}
\lim_{\varepsilon\rightarrow0}\Psi_{n}^{i,j}
  &=\lim_{\varepsilon\rightarrow0}\frac{8(u(k_{1})u(k_{2}))^{-\frac{3}{2}}T}{3\log(\varepsilon^{-\frac{2}{3}})}\int_{0}^{\varepsilon^{-\frac{2}{3}}T}\mu(x,u(k_{1}),u(k_{2}))^2dsdx
	=\frac{3}{2^3}T\sqrt{u(k)u(j)}.
\end{align*}
From here, it follows that
\begin{align*}
\lim_{\varepsilon\rightarrow0}\E\left[D_{k_{1}}^{\varepsilon}D_{k_{2}}^{\varepsilon}\right]
  &=\Sigma_{i,j}:=\frac{3dT}{2^{d+5}\pi^{d}2^{2M}}\psi(u(k_{1}),u(k_{2}))(u(k_{1})u(k_{2}))^{2},
\end{align*} 
namely, the covariance matrix of $D^{\varepsilon}$ converges to the matrix $\Sigma=(\Sigma_{k,j})_{2\leq k,j\leq M2^{M}}$.
In addition, by \cite[Equation(1.4)]{DaNoNu}~, for $2\leq k\leq M2^{M}$ fixed, the sequence of random variables $D_{k}^{\varepsilon}$ converges to a Gaussian random variable as $\varepsilon\rightarrow0$. Therefore, by the Peccati-tudor criterion, the random vector $D$ converges to a jointly Gaussian vector $Z=(Z_{k})_{k=2}^{M2^{M}}$, with mean zero and covariance $\Sigma$. In particular, we have 
\begin{align*}
\frac{\varepsilon^{\frac{d}{2}-1}}{\sqrt{\log(\varepsilon)}}R_{T,M}^{\varepsilon}
  &=\sum_{k=2}^{M2^{M}}D_{k}^{\varepsilon}
  \stackrel{Law}{\rightarrow}\mathcal{N}\left(0,\sum_{j,k=2}^{M2^{M}}\Sigma_{k,j}\right)\ \ \ \ \ \text{ as } \ \ \varepsilon\rightarrow0.
\end{align*}
Relation \eqref{eq:RTMtoGaussian} easily follows from the previous identity.

Since \eqref{eq:fddlog} holds, in order to finish the proof of Theorem \ref{Teo:convergencelog} it suffices to prove tightness. As before, we define, for $T_{1}\leq T_{2}$ belonging to a compact interval $[0,K]$ the random variable $Z_{\varepsilon}$ by the formula \eqref{eq:Zdef}. Then, by the Billingsley criterion,  it suffices to prove that there exist constants $C>0$ and $p>2$, only depending on $K$, such that 
\begin{align}\label{ineq:Zplogfinal2}
\E\left[\Abs{\frac{\varepsilon^{\frac{d}{2}-1}}{\sqrt{\log(1/\varepsilon)}}Z_{\varepsilon}}^{p}\right]
  &\leq C(T_{2}-T_{1})^{\frac{p}{2}}.
\end{align} 
Using relation \eqref{ineq:Ze2} with $H=\frac{3}{4}$, we can easily check that 
\begin{align}\label{ineq:Ze2log}
\frac{\varepsilon^{d-2}}{\log(1/\varepsilon)}\Norm{Z_{\varepsilon}}_{L^{p}(\Omega)}^{2}
  &\leq \frac{C(T_{2}-T_{1})}{\log(1/\varepsilon)}\int_{\R_{+}^2}\int_{0}^{T_{2}}\varepsilon^{-\frac{8}{3}}\mu(x,\varepsilon^{\frac{2}{3}}u_{1},\varepsilon^{\frac{2}{3}}u_{2})^2(u_{1}u_{2})^{-2H}\nonumber\\
	&\times\Theta_{1}(\varepsilon^{-\frac{2}{3}}x,u_{1},u_{2})^{-\frac{d}{p}}dx du_{1}du_{2}\\
	&\leq \sup_{\varepsilon\in(0,1/e)}\frac{C(T_{2}-T_{1})}{\log(1/\varepsilon)}\int_{\R_{+}^2}\int_{0}^{T_{2}}\varepsilon^{-\frac{8}{3}}\mu(x,\varepsilon^{\frac{2}{3}}u_{1},\varepsilon^{\frac{2}{3}}u_{2})^2(u_{1}u_{2})^{-2H}\nonumber\\
	&\times\Theta_{1}(\varepsilon^{-\frac{2}{3}}x,u_{1},u_{2})^{-\frac{d}{p}}dx du_{1}du_{2}.\nonumber
\end{align}
The right-hand side in the previous identity is finite for $p>2$ sufficiently small by Lemma \ref{lem:finiteFGintegrallog}, and hence, there exists a constant $p>2$ such that
\begin{align*}
\frac{\varepsilon^{d-2}}{\log(1/\varepsilon)}\E\left[\Abs{Z_{\varepsilon}}^{p}\right]
  &\leq C(T_{2}-T_{1})^{\frac{p}{2}}.
\end{align*}
This finishes the proof of the tightness property for $\frac{\varepsilon^{\frac{d}{2}-1}}{\sqrt{\log(1/\varepsilon)}}(I_{T}^{\varepsilon}-\E\left[I_{T_{1}}^{\varepsilon}\right])$. The proof of Theorem \ref{Teo:convergencelog} is now complete.

\section{Technical lemmas}\label{sec:tech}
In this section we prove some technical lemmas, which where used in the proof of Theorems \ref{Teo:convergence}, \ref{Teo:convergenceHermite} and \ref{Teo:convergencelog}.
\begin{Lema}\label{lem:local_non_determinism}
Let $s_{1},s_{2},t_{1},t_{2}\in\R_{+}$ be such that $s_{1}\leq s_{2}$, and $s_{i}\leq t_{i}$  for $i=1,2$. Denote by $\Sigma$ the covariance matrix of $(B_{t_{1}}-B_{s_{1}},B_{t_{2}}-B_{s_{2}})$. Then, there exists a constants $0<\delta<1$ and $k>0$, such that the following inequalities hold 
\begin{enumerate}
\item If $s_{1}< s_{2}< t_{1}< t_{2}$, 
\begin{align}\label{ineq:local_non_determinism_R1}
\left|\Sigma\right|
  \geq \delta ((a+b)^{2H}c^{2H}+(b+c)^{2H}a^{2H}),
\end{align}
where $a:=s_{2}-s_{1}$, $b:=t_{1}-s_{2}$ and $c:=t_{2}-t_{1}$.
\item If $s_{1}< s_{2}< t_{2}< t_{1}$,
\begin{align}\label{ineq:local_non_determinism_R2}
\left|\Sigma\right|
  \geq \delta b^{2H}(a^{2H}+c^{2H}),
\end{align}
where $a:=s_{2}-s_{1}$, $b:=t_{2}-s_{2}$ and $c:=t_{1}-t_{2}$.
\item If $s_{1}<t_{1}<s_{2}<t_{2}$,
\begin{align}\label{ineq:local_non_determinism_R3}
\left|\Sigma\right|
  \geq \delta a^{2H}c^{2H},
\end{align}
where $a:=t_{1}-s_{1}$ and $c:=t_{2}-s_{2}$.
\end{enumerate}
\end{Lema}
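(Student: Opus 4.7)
The plan is to reduce all three cases to Pitt's strong local non-determinism for the one-dimensional fractional Brownian motion, which asserts the existence of a constant $c_H>0$ such that for every ordered partition $0\leq r_0<r_1<\cdots<r_n$ and every $u_1,\dots,u_n\in\R$,
\begin{align*}
\mathrm{Var}\Big(\sum_{i=1}^{n} u_i\bigl(B_{r_i}^{(1)}-B_{r_{i-1}}^{(1)}\bigr)\Big)\ \geq\ c_H \sum_{i=1}^{n} u_i^{2}(r_i-r_{i-1})^{2H}.
\end{align*}
In each of the three geometric configurations I will partition $[s_1,t_1]\cup[s_2,t_2]$ into at most three consecutive subintervals of lengths drawn from $\{a,b,c\}$, express the pair $(B_{t_1}-B_{s_1},B_{t_2}-B_{s_2})$ as an integer-coefficient linear image of the corresponding vector $(X_1,X_2,X_3)$ of subinterval increments, and then apply the inequality above to the quadratic form $y^{T}\Sigma y$ for an arbitrary $y\in\R^{2}$. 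This produces a matrix inequality $\Sigma\succeq c_{H}M$ for an explicit positive definite $2\times 2$ matrix $M$, and since the Weyl inequality preserves the ordering of eigenvalues on the cone of positive semidefinite matrices, it follows that $|\Sigma|\geq c_{H}^{2}|M|$.

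Carrying this out case by case: in case~(3) the intervals are disjoint and two of the subinterval increments are themselves $B_{t_{1}}-B_{s_{1}}$ and $B_{t_{2}}-B_{s_{2}}$, so $M=\mathrm{diag}(a^{2H},c^{2H})$ and \eqref{ineq:local_non_determinism_R3} follows at once. In case~(2) one partitions $[s_{1},t_{1}]=[s_{1},s_{2}]\cup[s_{2},t_{2}]\cup[t_{2},t_{1}]$ (of lengths $a,b,c$) and writes $B_{t_{1}}-B_{s_{1}}=X_{1}+X_{2}+X_{3}$ and $B_{t_{2}}-B_{s_{2}}=X_{2}$; the resulting $M$ has $|M|=b^{2H}(a^{2H}+c^{2H})$, which is exactly \eqref{ineq:local_non_determinism_R2}. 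In case~(1) the partition is $[s_{1},t_{2}]=[s_{1},s_{2}]\cup[s_{2},t_{1}]\cup[t_{1},t_{2}]$ with $B_{t_{1}}-B_{s_{1}}=X_{1}+X_{2}$ and $B_{t_{2}}-B_{s_{2}}=X_{2}+X_{3}$, so that
\begin{align*}
M=\begin{pmatrix} a^{2H}+b^{2H} & b^{2H}\\ b^{2H} & b^{2H}+c^{2H}\end{pmatrix},\qquad |M|=a^{2H}b^{2H}+a^{2H}c^{2H}+b^{2H}c^{2H}.
\end{align*}

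The one small additional step, needed only in case~(1), is to bridge $|M|$ and the slightly fatter right-hand side of \eqref{ineq:local_non_determinism_R1}. Using the elementary inequality $(p+q)^{2H}\leq 2(p^{2H}+q^{2H})$, valid for all $p,q\geq 0$ and $H\in(0,1)$ (subadditivity of $x\mapsto x^{2H}$ when $H\leq 1/2$; Jensen together with $2^{2H-1}\leq 2$ when $H>1/2$),
\begin{align*}
(a+b)^{2H}c^{2H}+(b+c)^{2H}a^{2H}\ \leq\ 4\bigl(a^{2H}b^{2H}+a^{2H}c^{2H}+b^{2H}c^{2H}\bigr)=4|M|,
\end{align*}
so that \eqref{ineq:local_non_determinism_R1} holds with $\delta=c_{H}^{2}/4$. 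The sole substantive input is strong local non-determinism itself, which is a classical result of Pitt spelled out for fBm in Nualart's monograph; once it is invoked, each of the three bounds is a short linear-algebraic verification and I do not anticipate any serious obstacle.
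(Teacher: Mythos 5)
Your proposal is correct, but it takes a genuinely different route from the paper: for this lemma the paper gives no argument at all, simply citing Lemma B.1 of \cite{JMTANAKA} for all three bounds and remarking that \eqref{ineq:local_non_determinism_R1} and \eqref{ineq:local_non_determinism_R3} were also proved in \cite[Lemma~9]{HuNu}, whose stated bound in the case $s_{1}<s_{2}<t_{2}<t_{1}$ is incorrect. Your argument is instead self-contained modulo the classical strong local non-determinism of fBm in increment form, and the details check out: applying that inequality to $\mathrm{Var}\bigl(y_{1}(B_{t_{1}}^{(1)}-B_{s_{1}}^{(1)})+y_{2}(B_{t_{2}}^{(1)}-B_{s_{2}}^{(1)})\bigr)$ with your three partitions yields $y^{T}\Sigma y\geq c_{H}\,y^{T}My$ for all $y\in\R^{2}$, where in case (2) the matrix has entries $M_{11}=a^{2H}+b^{2H}+c^{2H}$, $M_{12}=M_{21}=b^{2H}$, $M_{22}=b^{2H}$, so that $\Abs{M}=b^{2H}(a^{2H}+c^{2H})$ as you claim; the passage from $\Sigma\succeq c_{H}M$ to $\Abs{\Sigma}\geq c_{H}^{2}\Abs{M}$ is legitimate by eigenvalue monotonicity (Courant--Fischer, or Weyl's monotonicity theorem); and the bridge $(a+b)^{2H}c^{2H}+(b+c)^{2H}a^{2H}\leq 4\bigl(a^{2H}b^{2H}+a^{2H}c^{2H}+b^{2H}c^{2H}\bigr)=4\Abs{M}$ in case (1) is valid for every $H\in(0,1)$ via $(p+q)^{2H}\leq 2(p^{2H}+q^{2H})$. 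What your approach buys is a uniform, verifiable proof of all three cases --- in particular an independent confirmation of case (2), precisely the case where \cite{HuNu} erred --- and in case (1) it even delivers the slightly stronger lower bound $\delta\bigl(a^{2H}b^{2H}+a^{2H}c^{2H}+b^{2H}c^{2H}\bigr)$; what the citation buys the paper is brevity. Your only loose end is attribution: the increment form of strong local non-determinism goes back to Pitt and Berman and is the form used in \cite{HuNu}, rather than appearing in Nualart's monograph, but this is a bibliographic point, not a gap in the argument.
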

\begin{proof}
Relations \eqref{ineq:local_non_determinism_R1}-\eqref{ineq:local_non_determinism_R3} follow from Lemma B.1. in \cite{JMTANAKA}. The inequalities \eqref{ineq:local_non_determinism_R1} and \eqref{ineq:local_non_determinism_R3} where also proved in \cite[Lemma~9]{HuNu}, but the lower bound given in this lemma for the case $s_1<s_2<t_2<t_1$ is not correct. 
\end{proof}
\begin{Lema}\label{ineq:techR}
There exists a constant $k>0$, such that for every $s_{1}<t_{1}<s_{2}<t_{2}$,
\begin{align}\label{eq:inequR32}
\mu(a+b,a,c)
  &\leq kb^{2H-2}ac,
\end{align}
where $a:=t_{1}-s_{1}$, $b:=s_{2}-t_{1}$ and $c:=t_{2}-s_{2}$. In addition, if $H>\frac{1}{2}$,
\begin{align}\label{eq:inequR321}
\mu(x,u_{1},u_{2})
  &\leq k(x+u_{1}+u_{2})^{2H-2}u_{1}u_{2},
\end{align}
where $x:=s_{2}-s_{1}$, $u_{1}:=t_{1}-s_{1}$ and $u_{2}:=t_{2}-s_{2}$.
\end{Lema}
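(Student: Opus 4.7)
The plan is to realize $\mu$ as a double integral of the kernel $(s-r)^{2H-2}$ and bound the integrand by elementary pointwise estimates. Starting from $\mu(x,u_1,u_2)=\Ip{\Indi{[0,u_1]},\Indi{[x,x+u_2]}}_{\Hg}$ and \eqref{eq:IpHbig}, for $H>\tfrac12$ one has
\[
\mu(x,u_1,u_2) \;=\; H(2H-1)\int_0^{u_1}\!\!\int_x^{x+u_2}(s-r)^{2H-2}\,ds\,dr,
\]
with $s-r\geq b:=x-u_1>0$ throughout, since $s_1<t_1<s_2$ forces $u_1<x$.

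For the first inequality, I would substitute $r=a-u$, $s=a+b+v$ to rewrite, in the notation $x=a+b,\ u_1=a,\ u_2=c$,
\[
\mu(a+b,a,c) \;=\; H(2H-1)\int_0^a\!\!\int_0^c(b+u+v)^{2H-2}\,dv\,du.
\]
Since $b+u+v\geq b$ and $2H-2<0$ for $H\in(\tfrac12,1)$, bounding the integrand by $b^{2H-2}$ yields $\mu(a+b,a,c)\leq H(2H-1)\,ac\,b^{2H-2}$ directly. The cases $H\leq\tfrac12$ are trivial: the explicit formula $\mu(a+b,a,c)=\tfrac12\bigl[(a+b+c)^{2H}+b^{2H}-(a+b)^{2H}-(b+c)^{2H}\bigr]$ together with concavity of $t\mapsto t^{2H}$ gives $\mu\leq 0$, while $kb^{2H-2}ac$ is nonnegative.

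For the second inequality (with $H>\tfrac12$), substitute $r=u_1 v_1$, $s=x+u_2 v_2$, then $w_1:=1-v_1$, to obtain
\[
\mu(x,u_1,u_2) \;=\; H(2H-1)\,u_1 u_2 \int_0^1\!\!\int_0^1\!(b+u_1 w_1+u_2 v_2)^{2H-2}\,dw_1\,dv_2.
\]
Assume WLOG $u_1\geq u_2$ (the case $u_2\geq u_1$ is symmetric, with the roles of $w_1$ and $v_2$ exchanged). The key pointwise estimate, in the spirit of \eqref{eq:muboundR3}, is
\[
b+u_1 w_1+u_2 v_2 \;\geq\; w_1(b+u_1)\;\geq\; w_1\!\left(b+\tfrac{u_1+u_2}{2}\right),
\]
where the first step drops $u_2 v_2\geq 0$ and uses $b\geq w_1 b$ for $w_1\in[0,1]$, and the second uses $u_1\geq(u_1+u_2)/2$. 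Raising to the negative power $2H-2$ (which reverses the inequality) and integrating, with $\int_0^1 w_1^{2H-2}\,dw_1=1/(2H-1)$ finite precisely because $H>\tfrac12$, produces
\[
\mu(x,u_1,u_2) \;\leq\; H\,u_1 u_2\,\bigl(x-\tfrac{u_1}{2}+\tfrac{u_2}{2}\bigr)^{2H-2}.
\]

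To conclude, I would invoke the elementary identity
\[
4\bigl(x-\tfrac{u_1}{2}+\tfrac{u_2}{2}\bigr)-(x+u_1+u_2)\;=\;3(x-u_1)+u_2\;\geq\;0,
\]
valid because $x>u_1$ and $u_2>0$, giving $x-\tfrac{u_1}{2}+\tfrac{u_2}{2}\geq(x+u_1+u_2)/4$ and therefore the desired bound with $k=H\cdot 4^{2-2H}$. There is no serious obstacle; the only subtle point is choosing a lower bound for $b+u_1 w_1+u_2 v_2$ that factors as $w_1$ times a quantity independent of $v_2$, so that the singularity $w_1^{2H-2}$ becomes integrable on $[0,1]$—this is exactly where the hypothesis $H>\tfrac12$ is used, and it is precisely what necessitates the case split $u_1\geq u_2$ vs.\ $u_2\geq u_1$.
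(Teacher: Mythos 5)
Your proof is correct and follows essentially the same route as the paper's: both rest on the representation $\mu(a+b,a,c)=H(2H-1)ac\int_{[0,1]^2}(b+av_1+cv_2)^{2H-2}\,dv_1\,dv_2$, obtaining \eqref{eq:inequR32} by bounding the integrand by $b^{2H-2}$, and \eqref{eq:inequR321} by lower-bounding the argument by an integration variable times a quantity comparable (up to the factor $4$) to $x+u_1+u_2$, so that $\int_0^1 v^{2H-2}\,dv<\infty$ precisely because $H>\frac12$ — the paper uses $b+av_1+cv_2\ge(a\vee b\vee c)v$ where you split on $u_1\ge u_2$ versus $u_2\ge u_1$, a cosmetic difference. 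Your explicit treatment of $H\le\frac12$ in \eqref{eq:inequR32} via concavity of $t\mapsto t^{2H}$ (giving $\mu\le 0$) is a welcome extra precision, since the paper's ``drop the term'' step, taken literally, yields the upper bound only when $H>\frac12$.
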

\begin{proof}
We can easily check that 
\begin{align*}
\mu(a+b,a,c)
  &=\frac{1}{2}((a+b+c)^{2H}+b^{2H}-(b+c)^{2H}-(a+b)^{2H}),
\end{align*}
and hence, 
\begin{align*}
\mu(a+b,a,c)
  &=H(2H-1)ac\int_{[0,1]^2}\Abs{b+av_{1}+cv_{2}}^{2H-2}dv_{1}dv_{2},
\end{align*}
Relation \eqref{eq:inequR32} follows by dropping the term $av_{1}+cv_{2}$ in the previous integral, while  \eqref{eq:inequR321} follows from the following computation, which is valid for every $H>\frac{1}{2}$, 
\begin{align*}
\mu(a+b,a,c)
  &=H(2H-1)ac\int_{[0,1]^2}\Abs{b+av_{1}+cv_{2}}^{2H-2}dv_{1}dv_{2}\\
	&\leq H(2H-1)ac\int_{0}^{1}\Abs{(a\vee b\vee c)v}^{2H-2}dv\\
	&=Hac\Abs{a\vee b\vee c}^{2H-2}\leq H4^{2H-2}ac\Abs{2a+b+c}^{2H-2}\\
	&=4^{2H-2}H(x+u_{1}+u_{2})^{2H-2}u_{1}u_{2}.
\end{align*}
\end{proof}

\begin{Lema}\label{lem:inegraltightness}
Define the functions $\mu$ and $\Theta_{1}$ by \eqref{eq:mudef} and \eqref{eq:Ldef} respectively. Let $\frac{3}{2d}<H<1$, and $0<p<\frac{4Hd}{3}$ be fixed. {\color{red}Then, the following integral is convergent 
\begin{align}\label{eq:integralexpressiontight2p}
\int_{\Sc_{i}}\frac{\mu(x,u_{1},u_{2})^2}{u_{1}^{2H}u_{2}^{2H}}\Theta_{1}(x,u_{1},u_{2})^{-\frac{d}{p}}\text{d}x\text{d}u_{1}\text{d}u_{2}<\infty,
\end{align}
for $i=1,2$, where the sets $\Sc_{i}$ are defined by \eqref{eq:Scdef}.} Moreover, if $H<\frac{3}{4}$, then 
\begin{align}\label{eq:integralexpressiontight2}
\int_{\R_{+}^{3}}\frac{\mu(x,u_{1},u_{2})^2}{u_{1}^{2H}u_{2}^{2H}}\Theta_{1}(x,u_{1},u_{2})^{-\frac{d}{p}}\text{d}x\text{d}u_{1}\text{d}u_{2}<\infty.
\end{align}
\end{Lema}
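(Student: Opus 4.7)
The plan is to decompose $\R_+^3 = \Sc_1 \cup \Sc_2 \cup \Sc_3$ (disjoint up to boundaries) and estimate each contribution using the corresponding case of the local nondeterminism Lemma \ref{lem:local_non_determinism}, together with the trivial bound $\mu(x,u_1,u_2)^2 \leq (u_1 u_2)^{2H}$ and, where needed, the sharper decay bound of Lemma \ref{ineq:techR}.

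For $\Sc_1$ and $\Sc_2$, which correspond to the overlapping-interval cases $s_1 \leq s_2 \leq t_1 \leq t_2$ and $s_1 \leq s_2 \leq t_2 \leq t_1$, I would introduce the natural linear changes of variables $(a,b,c) := (x,\, u_1-x,\, x+u_2-u_1)$ and $(a,b,c) := (x,\, u_2,\, u_1-x-u_2)$ respectively, each of which sends the region bijectively onto $\R_+^3$ with unit Jacobian. Parts (1) and (2) of Lemma \ref{lem:local_non_determinism} provide product-type lower bounds for $|\Sigma|$, of order $(a+b)^{2H}c^{2H}+(b+c)^{2H}a^{2H}$ in $\Sc_1$ and $b^{2H}(a^{2H}+c^{2H})$ in $\Sc_2$. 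Combined with the constant and $u_1^{2H},u_2^{2H}$ terms of $\Theta_1$, and after using $\mu^2/(u_1 u_2)^{2H}\leq 1$, the integrand is dominated by $\Theta_1^{-d/p}$; a careful application of weighted geometric-mean inequalities, splitting the three variables into small and large regimes to avoid the spurious singularities of bounds like $(ac)^{-Hd/p}$ at the origin, yields a bound by a tensor product of integrable one-variable functions, with integrability guaranteed by the hypothesis $d/p > 3/(4H)$.

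The delicate region is $\Sc_3$, corresponding to the disjoint-interval case $s_1 \leq t_1 \leq s_2 \leq t_2$, and is needed only for the second, global claim. Here I would change variables to $(u_1, b, u_2)$ with $b := x - u_1 \geq 0$. Lemma \ref{ineq:techR} yields the pointwise decay $\mu(x,u_1,u_2)^2 \leq k^2 b^{4H-4} u_1^2 u_2^2$ (valid for all $H \neq 1/2$), while the trivial bound $\mu^2 \leq (u_1 u_2)^{2H}$ is always available; the two bounds are tight in complementary regimes that cross at the natural scale $b \sim \sqrt{u_1 u_2}$, so I split $\Sc_3 = \{b \leq \sqrt{u_1 u_2}\} \cup \{b > \sqrt{u_1 u_2}\}$ and use the appropriate bound in each subregion. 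Part (3) of Lemma \ref{lem:local_non_determinism} gives $\Theta_1 \geq C(1+u_1^{2H})(1+u_2^{2H})$ throughout $\Sc_3$. A direct calculation then shows that, in both subregions, after integrating in $b$ the integrand is dominated by $(u_1 u_2)^{1/2}(1+u_1^{2H})^{-d/p}(1+u_2^{2H})^{-d/p}$, whose integral over $\R_+^2$ is finite precisely when $d/p > 3/(4H)$.

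The main technical obstacle is the $\Sc_3$ analysis: naively applying the decay bound $\mu^2 \leq k^2 b^{4H-4}(u_1 u_2)^2$ everywhere fails near $b=0$ because of the singularity of $b^{4H-4}$ there, while the trivial bound $\mu^2 \leq (u_1 u_2)^{2H}$ carries no decay in $b$ and hence is not integrable at $b=\infty$; matching the two contributions sharply at $b \sim \sqrt{u_1 u_2}$ is what makes the hypothesis $p < 4Hd/3$ sharp. Moreover, integrability in $b$ at infinity on $\{b > \sqrt{u_1 u_2}\}$ demands $4H-4 < -1$, i.e., $H < 3/4$, which is precisely the origin of the extra hypothesis in the second claim. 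The $\Sc_1$ and $\Sc_2$ estimates are more routine but share the same flavor, and the condition $3/(2d) < H$ (appearing in Theorems \ref{Teo:convergence} and \ref{Teo:convergencelog}) is what ensures $4Hd/3 > 2$ so that the admissible range of $p$ in the lemma is non-empty, as needed for the Billingsley tightness criterion.
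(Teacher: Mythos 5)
Your overall route coincides with the paper's: the same decomposition $\R_{+}^{3}=\Sc_{1}\cup\Sc_{2}\cup\Sc_{3}$, the same linear changes of variables on each piece, the three cases of Lemma \ref{lem:local_non_determinism} for lower bounds on $\Theta_{1}$, and the decay estimate of Lemma \ref{ineq:techR} on $\Sc_{3}$. Your treatment of $\Sc_{3}$ is correct and in fact somewhat cleaner than the paper's: splitting at $b\sim\sqrt{u_{1}u_{2}}$ and matching the trivial bound $\mu^{2}\leq (u_{1}u_{2})^{2H}$ against $\mu^{2}\lesssim b^{4H-4}(u_{1}u_{2})^{2}$ reproduces in one stroke what the paper does through four subregions (with separate mean-value arguments for $a\leq b\leq c$ and $c\leq b\leq a$), and you correctly identify $H<\frac34$ as coming from the integrability requirement $4H-4<-1$ at $b=\infty$. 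The $\Sc_{1}$ part also works with the trivial bound alone, as in the paper; the critical regime $a\asymp b\asymp c\asymp R$ there gives $\Theta_{1}\asymp R^{4H}$ on volume $R^{3}$, which is integrable exactly when $p<\frac{4Hd}{3}$.

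There is, however, a genuine gap on $\Sc_{2}$: your claim that after $\mu^{2}/(u_{1}u_{2})^{2H}\leq 1$ the integrand is dominated by $\Theta_{1}^{-d/p}$ and that geometric-mean manipulations then suffice is false in the upper part of the admissible $p$-range. In the coordinates $(a,b,c)=(x,u_{2},u_{1}-x-u_{2})$, part (2) of Lemma \ref{lem:local_non_determinism} gives $\Theta_{1}(a,a+b+c,b)\asymp 1+b^{2H}+(a+b+c)^{2H}+b^{2H}(a^{2H}+c^{2H})$, so on the region $b\leq 1$, $a,c\in(R,2R)$ one has $\Theta_{1}\asymp R^{2H}$ on a set of volume of order $R^{2}$, whence
\begin{align*}
\int_{\Sc_{2}}\Theta_{1}(x,u_{1},u_{2})^{-\frac{d}{p}}\,dx\,du_{1}\,du_{2}
  \gtrsim \sum_{k\geq 0}2^{k\left(2-\frac{2Hd}{p}\right)},
\end{align*}
which diverges as soon as $p\geq Hd$. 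Since $Hd<\frac{4Hd}{3}$, your dominating function is not integrable for part of the stated range, and this is not a removable technicality: in the tightness application one takes $p>2$ with $Hd$ possibly only slightly above $\frac32$ (e.g.\ $Hd=1.6$, $p=2.05<\frac{4Hd}{3}$), so the failure occurs at exactly the relevant parameters. What rescues this region is the smallness of $\mu$ when $u_{2}=b$ is small: the mean value theorem applied to $\mu(a,a+b+c,b)=\frac12\left((b+c)^{2H}+(a+b)^{2H}-c^{2H}-a^{2H}\right)$ yields $\Abs{\mu}\leq Hb\left(a^{2H-1}+c^{2H-1}\right)$ for $H<\frac12$, and $\Abs{\mu}\leq Hb\left((a+b)^{2H-1}+(c+b)^{2H-1}\right)$ for $H\geq\frac12$, producing the extra factor $b^{2-2H}$ (together with negative powers of $a\vee c$) that the paper exploits in \eqref{eq:mumv} through \eqref{eq:ineqlong2pp} to restore integrability on $\{b\leq a\wedge c\}$ under $p<\frac{4Hd}{3}$. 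Your proposal is missing precisely this ingredient; once it is added, the rest of your argument goes through.
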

\begin{proof}
Denote the integrand in \eqref{eq:integralexpressiontight2} and \eqref{eq:integralexpressiontight2p} by $\Psi(x,u_{1},u_{2})$, namely, 
\begin{align}\label{eq:Psidef}
\Psi(x,u_{1},u_{2})
  &=\mu(x,u_{1},u_{2})^2(u_{1}u_{2})^{-2H}\Theta_{1}(x,u_{1},u_{2})^{-\frac{d}{p}}.
\end{align}
We can decompose the domain of integration of \eqref{eq:integralexpressiontight2}, as $\R_{+}^{3}=\Sc_{1}\cup\Sc_{2}\cup\Sc_{3}$, where $\Sc_{1},\Sc_{2},\Sc_{3}$ are defined by \eqref{eq:Scdef}. Then, it suffices to show that 
\begin{align}\label{eq:Psiplain}
\int_{\Sc_{i}}\Psi(x,u_{1},u_{2})dxdu_{1}du_{2}<\infty,
\end{align} 
for $i=1,2$ provided that $0<p<\frac{4Hd}{3}$, and for $i=3$, provided that $0<p<\frac{4Hd}{3}$ and $H<\frac{3}{4}$. First consider the case $i=1$. Changing the coordinates $(x,u_{1},u_{2})$  by $(a:=x,b:=u_{1}-x,c:=x+u_{2}-u_{1} )$ in \eqref{eq:Psiplain} for $i=1$, we get 
\begin{align*}
\int_{\Sc_{1}}\Psi(x,u_{1},u_{2})dxdu_{1}du_{2}
  &=\int_{\R_{+}^{3}}\Psi(a,a+b,b+c)dadbdc.
\end{align*}
To bound the integral in the right-hand side we proceed as follows. First we notice that the term $\mu(a,a+b,b+c)$ is given by 
\begin{align*}
\mu(a,a+b,b+c)
  &=\frac{1}{2}((a+b+c)^{2H}+b^{2H}-c^{2H}-a^{2H}).
\end{align*}
By the Cauchy-Schwarz inequality, $\Abs{\mu(a,a+b,b+c)}\leq(a+b)^{H}(b+c)^{H}$. In addition, by \eqref{ineq:local_non_determinism_R1} there exists a constant $\delta>0$ such that  
\begin{align}\label{ineq:lndr1}
(a+b)^{2H}(b+c)^{2H}-\mu(a,a+b,b+c)^2
  &\geq\delta((a+b)^{2H}c^{2H}+(b+c)^{2H}a^{2H}).
\end{align}
As a consequence, 
\begin{align*}
\Psi(a,a+b,b+c)
  &\leq\left(1+(a+b)^{2H}+(b+c)^{2H}+\delta((a+b)^{2H}c^{2H}+(b+c)^{2H}a^{2H})\right)^{-\frac{d}{p}}\nonumber.
\end{align*}
Hence, we deduce that there exists a constant $K>0$ such that the following inequalities hold
\begin{align*}
\Psi(a,a+b,b+c)
&\leq K\left(1+c^{2H}+c^{2H}b^{2H}\right)^{-\frac{d}{p}}~~~~~~~~~~~~~\text{if }~~a\leq b\leq c,\\
\Psi(a,a+b,b+c)
&\leq K\left(1+c^{2H}+c^{2H}a^{2H}\right)^{-\frac{d}{p}}~~~~~~~~~~~~~\text{if }~~b\leq a\leq c,\\
\Psi(a,a+b,b+c)
&\leq K\left(1+b^{2H}+c^{2H}b^{2H}\right)^{-\frac{d}{p}}~~~~~~~~~~~~~~~~\text{if }~~a\leq c\leq b,\\
\Psi(a,a+b,b+c)
&\leq K\left(1+b^{2H}+b^{2H}a^{2H}\right)^{-\frac{d}{p}}~~~~~~~~~~~~~~~~\text{if }~~c\leq a\leq b,\\
\Psi(a,a+b,b+c)
&\leq K\left(1+a^{2H}+c^{2H}a^{2H}\right)^{-\frac{d}{p}}~~~~~~~~~~~~~\text{if }~~b\leq c\leq a,\\
\Psi(a,a+b,b+c)
&\leq K\left(1+a^{2H}+b^{2H}a^{2H}\right)^{-\frac{d}{p}}~~~~~~~~~~~~~\text{if }~~c\leq b\leq a.
\end{align*}
Using the condition $p<\frac{4Hd}{3}$, as well as the previous inequalities, we can easily check that $\Psi(a,a+b,b+c)$ is integrable
in $\R_{+}^{3}$, which in turn implies that $\Psi(x,u_{1},u_{2})$ is integrable in $\Sc_{1}$, as required.\\~\\
\noindent Next we consider the case $i=2$.  Changing the coordinates $(x,u_{1},u_{2})$  by $(a:=x,b:=u_{2},c:=u_{1}-x-u_{2})$ in \eqref{eq:Psiplain} for $i=2$, we get 
\begin{align*}
\int_{\Sc_{2}}\Psi(x,u_{1},u_{2})dxdu_{1}du_{2}
  &=\int_{\R_{+}^{3}}\Psi(a,a+b+c,b)dadbdc.
\end{align*}
To bound the integral in the right-hand side we proceed as follows. First notice that the term $\mu(a,a+b+c,b)$ is given by
\begin{align}\label{eq:mudefr2}
\mu(a,a+b+c,b)
  &=\frac{1}{2}((b+c)^{2H}+(a+b)^{2H}-c^{2H}-a^{2H}).
\end{align}
By the Cauchy-Schwarz inequality, $\Abs{\mu(a,a+b+c,b)}\leq b^{H}(a+b+c)^{H}$. In addition, by \eqref{ineq:local_non_determinism_R2}, there exists a constant $\delta>0$ such that 
\begin{align*}
b^{2H}(a+b+c)^{2H}-\mu(a,a+b+c,b)^{2}
  &\geq\delta b^{2H}(a^{2H}+c^{2H}).
\end{align*}
As a consequence, 
\begin{align*}
\Psi(a,a+b+c,b)
  &\leq \left(1+b^{2H}+(a+b+c)^{2H}+\delta b^{2H}(a^{2H}+c^{2H})\right)^{-\frac{d}{p}}.
\end{align*}
From here it follows that there exists a constant $K>0$ such that the following inequalities hold
\begin{align}\label{eq:R2sub1}
\Psi(a,a+b+c,b)
&\leq K\left(1+c^{2H}+b^{2H}c^{2H}\right)^{-\frac{d}{p}}~~~~~~~~~~~~~\text{if }~~a\leq b\leq c,\nonumber\\
\Psi(a,a+b+c,b)
&\leq K\left(1+b^{2H}+b^{2H}c^{2H}\right)^{-\frac{d}{p}}~~~~~~~~~~~~~~~\text{if }~~a\leq c\leq b,\nonumber\\
\Psi(a,a+b+c,b)
&\leq K\left(1+b^{2H}+b^{2H}a^{2H}\right)^{-\frac{d}{p}}~~~~~~~~~~~~~~~\text{if }~~c\leq a\leq b,\nonumber\\
\Psi(a,a+b+c,b)
&\leq K\left(1+a^{2H}+b^{2H}a^{2H}\right)^{-\frac{d}{p}}~~~~~~~~~~~~\text{if }~~c\leq b\leq a.
\end{align}
Using the condition $p<\frac{4Hd}{3}$, as well as the previous inequalities, we can easily check that $\Psi(a,a+b+c,b)$ is integrable in the region $\{(a,b,c)\in\R_{+}^{3}\ |\ b\geq a\wedge c\}$.\\

Next we check the integrability of $\Psi(a,a+b+c,b)$ in $\{(a,b,c)\in\R_{+}^{3}\ |\ b\leq a\wedge c\}$. Applying the mean value theorem in  \eqref{eq:mudefr2}, we can easily check that 
\begin{align}\label{eq:mumv}
\mu(a,a+b+c,b)
  &=\frac{1}{2}(2H(a+\xi_{1})^{2H-1}b+2H(c+\xi_{2})^{2H-1}b),
\end{align}
for some $\xi_{1},\xi_{2}$ between $0$ and $b$. Therefore, if $H<\frac{1}{2}$, we obtain 
\begin{align}\label{eq:lndr2}
\mu(a,a+b+c,b)
  &\leq H(a^{2H-1}+c^{2H-1})b,
\end{align}
which in turn implies that  
\begin{align}\label{eq:ineq2reg2}
\Psi(a,a+b+c,b)
  &\leq H^2(a^{2H-1}+c^{2H-1})^2b^{2-2H}(a+b+c)^{-2H}\nonumber\\
	&\left(1+b^{2H}+(a+b+c)^{2H}+\delta b^{2H}(a^{2H}+c^{2H})\right)^{-\frac{d}{p}}.
\end{align}
For the case $H\geq\frac{1}{2}$, we use \eqref{eq:mumv}, in order to obtain
\begin{align*}
\mu(a,a+b+c,b)
  &\leq H((a+b)^{2H-1}+(c+b)^{2H-1})b,
\end{align*}
which in turn implies that  
\begin{align}\label{eq:ineq2reg2p}
\Psi(a,a+b+c,b)
  &\leq H^2((a+b)^{2H-1}+(c+b)^{2H-1})^2b^{2-2H}(a+b+c)^{-2H}\nonumber\\
	&\left(1+b^{2H}+(a+b+c)^{2H}+\delta b^{2H}(a^{2H}+c^{2H})\right)^{-\frac{d}{p}}.
\end{align}
From \eqref{eq:ineq2reg2}, we deduce that, if $H<\frac{1}{2}$, there exists a constant $K>0$ such that
\begin{align}\label{eq:ineqlong2p}
\Psi(a,a+b+c,b)
&\leq Ka^{4H-2}b^{2-2H}c^{-2H}\left(1+c^{2H}+b^{2H}c^{2H}\right)^{-\frac{d}{p}}~~~~~~~\text{if }~~b\leq a\leq c,\nonumber\\
\Psi(a,a+b+c,b)
&\leq Kc^{4H-2}b^{2-2H}a^{-2H}\left(1+a^{2H}+b^{2H}a^{2H}\right)^{-\frac{d}{p}}~~~~~~~\text{if }~~b\leq c\leq a.
\end{align}
In turn, from \eqref{eq:ineq2reg2p}, it follows that if $H\geq\frac{1}{2}$, there exists a constant $K>0$, such that 
\begin{align}\label{eq:ineqlong2pp}
\Psi(a,a+b+c,b)
&\leq Kc^{4H-2}b^{2-2H}\left(1+c^{2H}+b^{2H}c^{2H}\right)^{-\frac{d}{p}}~~~~~~~\text{if }~~b\leq a\leq c,\nonumber\\
\Psi(a,a+b+c,b)
&\leq Ka^{4H-2}b^{2-2H}\left(1+a^{2H}+b^{2H}a^{2H}\right)^{-\frac{d}{p}}~~~~~~~\text{if }~~b\leq c\leq a.
\end{align}
Using the conditions $H<\frac{3}{4}$ and $p<\frac{4Hd}{3}$,  we can easily check that $2H<\frac{Hd}{2p}$, which, by \eqref{eq:ineqlong2p} and \eqref{eq:ineqlong2pp}, implies that $\Psi(a,a+b+c,b)$ is integrable in $\{(a,b,c)\in\R_{+}^{3}\ |\ b\leq a\wedge c\}$. From here it follows that $\Psi(a,a+b+c,b)$ is integrable in $\R_{+}^{3}$, and hence $\Psi(x,u_{1},u_{2})$ is integrable in $\Sc_{2}$, as required.\\~\\
\noindent Finally we consider the case $i=3$ for $H<\frac{3}{4}$.  Changing the coordinates $(x,u_{1},u_{2})$  by $(a:=u_{1},b:=x-u_{1},c:=u_{2})$ in \eqref{eq:Psiplain} for $i=3$, we get 
\begin{align*}
\int_{\Sc_{3}}\Psi(x,u_{1},u_{2})dxdu_{1}du_{2}
  &=\int_{\R^{3}}\Psi(a+b,a,c)dadbdc.
\end{align*}
To bound the integral in the right-hand side we proceed as follows. First we notice that the term $\mu(a+b,a,c)$ is given by 
\begin{align}\label{eq:mudefR3}
\mu(a+b,a,c)
  &=\frac{1}{2}((a+b+c)^{2H}+b^{2H}-(b+c)^{2H}-(a+b)^{2H}).
\end{align}
 By the Cauchy-Schwarz inequality, $\mu(a+b,a,c)\leq a^{H}c^{H}$. In addition, by \eqref{ineq:local_non_determinism_R3}, there exist constants $k,\delta>0$ such that  
\begin{align}\label{eq:inequR31}
a^{2H}c^{2H}-\mu(a+b,a,c)^{2}
  &\geq\delta a^{2H}c^{2H},
\end{align}
and 
\begin{align}\label{eq:inequR32p}
\mu(a+b,a,c)
  &\leq kb^{2H-2}ac.
\end{align}
From \eqref{eq:inequR31}-\eqref{eq:inequR32p}, we deduce the following bounds for $\Psi$
\begin{align}
\Psi(a+b,a,c)
  &\leq \left(1+a^{2H}+c^{2H}+\delta a^{2H}c^{2H}\right)^{-\frac{d}{p}},\label{eq:ineq1reg3}\\
\Psi(a+b,a,c)
  &\leq 2Hb^{4H-4}(ac)^{-2H+2}\left(1+a^{2H}+c^{2H}+\delta a^{2H}c^{2H}\right)^{-\frac{d}{p}}.\label{eq:ineq3reg3}	
\end{align}
Using \eqref{eq:ineq1reg3}, as well as the condition $p<\frac{4Hd}{3}$, we can easily check that $\Psi(a+b,a,c)$ is integrable in the region $\{(a,b,c)\in\R_{+}^{3}\ |\ b\leq a\wedge c\}$.\\~\\
Next we check the integrability of $\Psi(a+b,a,c)$ in the region $\{(a,b,c)\in\R_{+}^{3}\ |\ b\geq a\vee c\}$. Since $H<\frac{3}{4}$, from \eqref{eq:ineq3reg3} it follows that there exists a constant $C>0$ such that 
\begin{align*}
\int_{(a\vee c)}^{\infty}\Psi(a+b,a,c)db
  &\leq C(ac)^{-2H+2}(a\vee c)^{4H-3}\left(1+a^{2H}+c^{2H}+a^{2H}c^{2H}\right)^{-\frac{d}{p}}\nonumber\\
	&\leq C(ac)^{\frac{1}{2}}\left(1+a^{2H}+c^{2H}+a^{2H}c^{2H}\right)^{-\frac{d}{p}}.
\end{align*}
The integrability of $\Psi(a+b,a,c)$ in the region $\{(a,b,c)\in\R_{+}^{3}\ |\ b\geq a\vee c\}$ then follows from condition the $p<\frac{4Hd}{3}$.

Finally, we prove the integrability of $\Psi(a+b,a,c)$ in the regions $\{(a,b,c)\in\R_{+}^{3}\ |\ a\leq b\leq c\}$ and $\{(a,b,c)\in\R_{+}^{3}\ |\ c\leq b\leq a\}$. Let $a,b,c\geq0$ be such that $a\leq b\leq c$. Applying the mean value theorem to \eqref{eq:mudefR3}, we can easily show that
\begin{align*}
\mu(a+b,a,c)
  &=\frac{1}{2}(\xi_{1}^{2H-1}a-\xi_{2}^{2H-1}a),
\end{align*}
for some $\xi_{1}$ between $c+b$ and $a+b+c$, and $\xi_{2}$ between $b$ and $a+b$. Hence, if $H\leq \frac{1}{2}$, it follows that
\begin{align*}
\Abs{\mu(a+b,a,c)}
  &\leq\frac{1}{2}(\Abs{\xi_{1}}^{2H-1}a+\Abs{\xi_{2}}^{2H-1}a)\\
	&\leq\frac{1}{2}((c+b)^{2H-1}a+b^{2H-1}a).
\end{align*} 
From here it follows that there exists a constant $C>0$, only depending on $H$ such that 
\begin{align}\label{eq:muboundr3}
\Abs{\mu(a+b,a,c)}
  &\leq Cb^{2H-1}a.
\end{align}
Using inequalities \eqref{eq:inequR31} and \eqref{eq:muboundr3}, we deduce that there exists a constant $K>0$ such that 
\begin{align*}
\Psi(a+b,a,c)
  &\leq Kb^{4H-2}a^{2-2H}c^{-2H}(1+a^{2H}+c^{2H}+a^{2H}c^{2H})^{-\frac{d}{p}}.
\end{align*}
From here, it follows that 
\begin{align}\label{ineq:psir31}
\Psi(a+b,a,c)
  &\leq Kb^{4H-2}a^{2-2H}c^{-2H}(1+a^{2H}+c^{2H}+a^{2H}c^{2H})^{-\frac{d}{p}}.
\end{align}
Using the condition $H\leq \frac{3}{4}$, we can easily show that $2H-\frac{2Hd}{p}\leq \frac{3}{2}-\frac{2Hd}{p}<0$. Hence, from \eqref{ineq:psir31}, we deduce that $\Psi(a+b,a,c)$ is integrable in $\{(a,b,c)\in\R_{+}^{3}\ |\ a\leq b\leq c\}$. The integrability of $\Psi(a+b,a,c)$ over the region $\{(a,b,c)\in\R_{+}^{3}\ |\ c\leq b\leq a\}$ in the case $H\leq \frac{1}{2}$, follows from a similar argument. To handle the case $H>\frac{1}{2}$, we proceed as follows. From \eqref{eq:mudefR3}, we can easily show that for every $a,b,c\geq0$ such that $a\leq b\leq c$,
\begin{align*}
\mu(a+b,a,c)
  &=H(2H-1)ac\int_{[0,1]^2}(b+a\xi+c\eta)^{2H-2}d\xi d\eta\\
	&\leq H(2H-1)ac\int_{0}^{1}(c\eta)^{2H-2}d\eta,
\end{align*}
and hence 
\begin{align*}
\mu(a+b,a,c)
  &\leq Hac^{2H-1}.
\end{align*}
From here it follows that 
\begin{align*}
\Psi(a+b,a,c)
  &\leq a^{2-2H}c^{2H-2}(1+a^{2H}+c^{2H}+a^{2H}c^{2H})^{-\frac{d}{p}}.
\end{align*}
Using the condition $p<\frac{4Hd}{3}$, we deduce that $\Psi(a+b,a,c)$ is integrable in $\{(a,b,c)\in\R_{+}^{3}\ |\ a\leq b\leq c\}$. The integrability of $\Psi(a+b,a,c)$ over the region $\{(a,b,c)\in\R_{+}^{3}\ |\ c\leq b\leq a\}$ in the case $H> \frac{1}{2}$, follows from a similar argument. From the previous analysis it follows that $\Psi(a+b,a,c)$ is integrable in $\R_{+}^3$, and hence $\Psi(x,u_{1},u_{2})$ is integrable in $\Sc_{3}$, as required. The proof is now complete.\\~\\
\end{proof}
Following similar arguments to those presented in the proof of Lemma \ref{lem:inegraltightness}, we can prove the following result
\begin{Lema}\label{lem:inegraltightness2}
Let the functions $\mu$ and $\Theta_{1}$ be defined by \eqref{eq:mudef} and \eqref{eq:Ldef} respectively. Then, for every $\frac{3}{4}<H<1$ and $0<p<\frac{4Hd}{3}$, 
\begin{align}\label{ineq:intsup}
\sup_{\varepsilon\in(0,1)}\int_{\R_{+}^2}\int_{0}^{T}\varepsilon^{-\frac{2}{H}}\frac{\mu(x,\varepsilon^{\frac{1}{2H}}u_{1},\varepsilon^{\frac{1}{2H}}u_{2})^2}{u_{1}^{2H}u_{2}^{2H}}\Theta_{1}(\varepsilon^{-\frac{1}{2H}}x,u_{1},u_{2})^{-\frac{d}{p}}dx du_{1}du_{2}<\infty.
\end{align}
\end{Lema}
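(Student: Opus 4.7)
The plan is to reduce the statement to Lemma \ref{lem:inegraltightness} by a change of variable exploiting the self-similarity of $B$. Since $\mu(\lambda x, \lambda u_1, \lambda u_2) = \lambda^{2H}\mu(x,u_1,u_2)$, setting $\tilde{x} = \varepsilon^{-1/(2H)}x$ (so that $dx = \varepsilon^{1/(2H)}d\tilde{x}$) transforms the integral in \eqref{ineq:intsup} into
\begin{equation*}
I(\varepsilon) = \varepsilon^{2-\frac{3}{2H}}\int_0^{\varepsilon^{-1/(2H)}T}\int_{\R_+^2}\Psi(\tilde{x},u_1,u_2)\,du_1\,du_2\,d\tilde{x},
\end{equation*}
where $\Psi(x,u_1,u_2) := \mu(x,u_1,u_2)^2(u_1u_2)^{-2H}\Theta_1(x,u_1,u_2)^{-d/p}$ is the integrand of Lemma \ref{lem:inegraltightness}. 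Because $H > 3/4$ yields $2 - 3/(2H) > 0$, the prefactor $\varepsilon^{2-3/(2H)}$ is at most $1$ for $\varepsilon\in(0,1)$.

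I would then split $\R_+^3 = \Sc_1\cup\Sc_2\cup\Sc_3$ as in \eqref{eq:Scdef}. On $\Sc_1$ and $\Sc_2$, Lemma \ref{lem:inegraltightness} directly applies: inspection of its proof shows that $\int_{\Sc_i}\Psi\,dx\,du_1\,du_2<\infty$ for $i=1,2$ holds under the conditions $3/(2d)<H<1$ and $0<p<4Hd/3$, with no upper restriction on $H$. Therefore the $\Sc_1$ and $\Sc_2$ parts of $I(\varepsilon)$ are controlled uniformly in $\varepsilon$.

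The hard part is the $\Sc_3$ contribution, where $\Psi$ fails to be integrable on $\R_+^3$ when $H > 3/4$. Here the key is to use the cutoff $\tilde{x}\leq N := \varepsilon^{-1/(2H)}T$ together with refined pointwise bounds. Since $H>1/2$, estimate \eqref{eq:inequR321} combined with $(x+u_1+u_2)^{2H-2}\leq x^{2H-2}$ on $\Sc_3$ (which holds because $x\geq u_1$ and $2H-2<0$) yields $\mu(x,u_1,u_2)^2 \leq Cx^{4H-4}(u_1u_2)^2$. At the same time, part $(3)$ of Lemma \ref{lem:local_non_determinism} gives $\Theta_1(x,u_1,u_2) \geq \delta(1+u_1^{2H})(1+u_2^{2H})$ on $\Sc_3$, uniformly in $x$. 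Combining these,
\begin{equation*}
\int_{\Sc_3\cap\{\tilde{x}\leq N\}}\Psi\,d\tilde{x}\,du_1\,du_2 \leq C\int_0^N \tilde{x}^{4H-4}\,d\tilde{x}\cdot\left(\int_0^\infty \frac{u^{2-2H}}{(1+u^{2H})^{d/p}}\,du\right)^{2}.
\end{equation*}
The inner $u$-integral is finite provided $p < 2Hd/(3-2H)$, and the elementary inequality $4Hd/3 \leq 2Hd/(3-2H) \Leftrightarrow H\geq 3/4$ shows this is guaranteed by the hypothesis $p<4Hd/3$.

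The decisive observation is the exact scaling cancellation: since $H>3/4$ makes $\tilde{x}^{4H-4}$ integrable near $0$,
\begin{equation*}
\varepsilon^{2-\frac{3}{2H}} \int_0^N \tilde{x}^{4H-4}\,d\tilde{x} = \frac{\varepsilon^{2-\frac{3}{2H}}N^{4H-3}}{4H-3} = \frac{T^{4H-3}}{4H-3},
\end{equation*}
using the algebraic identity $2 - 3/(2H) - (4H-3)/(2H) = 0$. Thus the $\Sc_3$ contribution to $I(\varepsilon)$ is also bounded uniformly in $\varepsilon\in(0,1)$, completing the proof. The subtle point is that the exponent $2-3/(2H)$ is tuned exactly so as to offset the growth of $N^{4H-3}$; the obstruction $H<3/4$ in Lemma \ref{lem:inegraltightness} is compensated here by the finite cutoff $N = \varepsilon^{-1/(2H)}T$ inherited from the $x\in[0,T]$ range in the original integral.
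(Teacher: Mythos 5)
Your proof is correct and follows essentially the same route as the paper: the same decomposition into $\Sc_{1},\Sc_{2},\Sc_{3}$, the same reduction of the $\Sc_{1},\Sc_{2}$ contributions to Lemma \ref{lem:inegraltightness} via the rescaling $\tilde{x}=\varepsilon^{-\frac{1}{2H}}x$ and the bound $\varepsilon^{2-\frac{3}{2H}}\leq 1$, and on $\Sc_{3}$ the same two estimates, $\mu(x,u_{1},u_{2})\leq Cx^{2H-2}u_{1}u_{2}$ (Lemma \ref{ineq:techR}) and $\Theta_{1}(x,u_{1},u_{2})\geq\delta(1+u_{1}^{2H})(1+u_{2}^{2H})$ (part (3) of Lemma \ref{lem:local_non_determinism}). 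Your exact cancellation $\varepsilon^{2-\frac{3}{2H}}N^{4H-3}=T^{4H-3}$ is precisely the paper's observation, kept in the original coordinates, that $\int_{0}^{T}x^{4H-4}\,dx<\infty$ because $H>\frac{3}{4}$, and your explicit verification $p<\frac{4Hd}{3}\leq\frac{2Hd}{3-2H}$ for the $u$-integral is a slightly more careful rendering of the paper's condition $3-2H<\frac{3}{2}<Hd$.
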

\begin{proof}
Denote by $\kappa_{\varepsilon}(x,u_{1},u_{2})$ the function 
\begin{align*}
\kappa_{\varepsilon}(x,u_{1},u_{2})
  &:=\varepsilon^{-\frac{2}{H}}\mu(x,\varepsilon^{\frac{1}{2H}}u_{1},\varepsilon^{\frac{1}{2H}}u_{2})^2(u_{1}u_{2})^{-2H}\Theta_{1}(\varepsilon^{-\frac{1}{2H}}x,u_{1},u_{2})^{-\frac{d}{p}}. 
\end{align*}
To prove \eqref{ineq:intsup}, it suffices to show that 
\begin{align}\label{ineq:intsup2}
\sup_{\varepsilon\in(0,1)}\int_{\R_{+}^2}\int_{0}^{T}\Indi{\Sc_{i}}(x,\varepsilon^{\frac{1}{2H}}u_{1},\varepsilon^{\frac{1}{2H}}u_{2})\kappa_{\varepsilon}(x,u_{1},u_{2})dxdu_{1}du_{2}<\infty,
\end{align}
for $i=1,2,3$. To prove \eqref{ineq:intsup2} in the case $i=1,2$, we make the change of variable $\widehat{x}:=\varepsilon^{-\frac{1}{2H}}x$, in order to get
\begin{multline*}
\int_{\R_{+}^2}\int_{0}^{T}\Indi{\Sc_{i}}(x,\varepsilon^{\frac{1}{2H}}u_{1},\varepsilon^{\frac{1}{2H}}u_{2})\kappa_{\varepsilon}(x,u_{1},u_{2})dxdu_{1}du_{2}\\
\begin{aligned}
  &=\varepsilon^{-\frac{3}{2H}+2}\int_{\R_{+}^2}\int_{0}^{\varepsilon^{-\frac{1}{2H}}T}\Indi{\Sc_{i}}(\widehat{x},u_{1},u_{2})\Psi(\widehat{x},u_{1},u_{2})d\widehat{x}du_{1}du_{2},
\end{aligned}
\end{multline*}
where $\Psi$ is defined by \eqref{eq:Psidef}. Hence,  
\begin{align}\label{ineq:intsup3}
\int_{\R_{+}^2}\int_{0}^{T}\Indi{\Sc_{i}}(x,\varepsilon^{\frac{1}{2H}}u_{1},\varepsilon^{\frac{1}{2H}}u_{2})\kappa_{\varepsilon}(x,u_{1},u_{2})dxdu_{1}du_{2}
  &\leq \int_{\Sc_{i}}\Psi(x,u_{1},u_{2})dxdu_{1}du_{2}.
\end{align}
In Lemma \ref{lem:inegraltightness}, we proved that  $\int_{\Sc_{1}}\Psi(x,u_{1},u_{2})dxdu_{1}du_{2}<\infty$, provided that $p<\frac{4Hd}{3}$. To handle the case $i=2$, we change the coordinates $(x,u_{1},u_{2})$ by $(a:=x,b:=u_{2},c:=u_{1}-x-u_{2})$, in order to get 
\begin{align*}
\int_{\Sc_{2}}\Psi(x,u_{1},u_{2})dxdu_{1}du_{2}
  &=\int_{\R_{+}^{3}}\Psi(a,a+b+c,b)dadbdc.
\end{align*}
By \eqref{eq:R2sub1}, $\Psi(a,a+b+c,b)$ is integrable in $\{(a,b,c)\in\R_{+}^{3}\ |\ b\geq a\wedge c\}$. In addition, since $2H-\frac{1}{2}\leq \frac{3}{2}<Hd$, by \eqref{eq:ineqlong2pp}, $\Psi(a,a+b+c,b)$ is integrable in $\{(a,b,c)\in\R_{+}^{3}\ |\ b\leq a\wedge c\}$, and hence, $\Psi(x,u_{1},u_{2})$ is integrable in $\Sc_{2}$, as required. It then remains to prove \eqref{ineq:intsup2} in the case $i=3$. Using \eqref{eq:IpHbig}, we can easily check that for every $(x,v_{1},v_{2})\in\Sc_{3}$, 
\begin{align*}
\Abs{\mu(x,v_{1},v_{2})}
  &=\Abs{\Ip{\Indi{[0,v_{1}]},\Indi{[x,x+v_{2}]}}_{\Hg^{d}}}\\
	&=H(2H-1)v_{1}v_{2}\int_{[0,1]^{2}}\Abs{x+v_{2}w_{2}-v_{1}w_{1}}^{2H-2}dw_{1}dw_{2}\\
	&\leq H(2H-1)v_{1}v_{2}\int_{[0,1]^{2}}\Abs{x-xw_{1}}^{2H-2}dw_{1}dw_{2},
\end{align*}
and hence, there exists a constant $C>0$ only depending on $H$, such that for every $(x,v_{1},v_{2})\in\Sc_{3}$, 
\begin{align}\label{ineq:muR3x}
\Abs{\mu(x,v_{1},v_{2})}
  &\leq Cv_{1}v_{2}x^{2H-2}.
\end{align}
On the other hand, for every $(x,\varepsilon^{\frac{1}{2H}}u_{1},\varepsilon^{\frac{1}{2H}}u_{2})\in\Sc_{3}$, it holds $(\varepsilon^{-\frac{1}{2H}}x,u_{1},u_{2})\in\Sc_{3}$, and hence, by \eqref{eq:inequR31}, 
\begin{align}\label{ineq:muR3theta}
\Theta_{1}(\varepsilon^{-\frac{1}{2H}}x,u_{1},u_{2})
  &\geq\delta u_{1}^{2H}u_{2}^{2H}
\end{align}
By \eqref{ineq:muR3x} and \eqref{ineq:muR3theta}, we obtain
\begin{align}\label{ineq:kappaR3}
\kappa_{\varepsilon}(x,u_{1},u_{2})
  &\leq C(u_{1}u_{2})^{2-2H}x^{4H-4}(1+u_{1}^{2H}+u_{2}^{2H}+u_{1}^{2H}u_{2}^{2H})^{-\frac{d}{p}},
\end{align}
for some constant $C>0$, and hence, 
\begin{multline*}
\int_{\R_{+}^2}\int_{0}^{T}\Indi{\Sc_{3}}(x,\varepsilon^{\frac{1}{2H}}u_{1},\varepsilon^{\frac{1}{2H}}u_{2})\kappa_{\varepsilon}(x,u_{1},u_{2})dxdu_{1}du_{2}\\
\leq\int_{\R_{+}^2}\int_{0}^{T}(u_{1}u_{2})^{2-2H}x^{4H-4}(1+u_{1}^{2H}+u_{2}^{2H}+u_{1}^{2H}u_{2}^{2H})^{-\frac{d}{p}}dxdu_{1}du_{2}.
\end{multline*}
Since $H>\frac{3}{4}$, then $3-2H<\frac{3}{2}<Hd$, and hence, the integral in the right-hand side of the previous identity is finite, which implies that \eqref{ineq:intsup2} holds for $i=3$, as required. The proof is now complete.
\end{proof}
\begin{Lema}\label{lem:finiteFGintegrallog}
Let $d\geq 3$, and $T>0$ be fixed. Let the functions $\mu$ and $\Theta_{\varepsilon}$ be defined by \eqref{eq:mudef} and \eqref{eq:Ldef} respectively and and assume that $H=\frac{3}{4}$. Then, for every $0<p<d$,  
\begin{align*}
\sup_{\varepsilon\in(0,1/e)}\frac{\varepsilon^{-8/3}}{\log(1/\varepsilon)}\int_{\R_{+}^2}\int_{0}^{T}\frac{\mu(x,\varepsilon^{\frac{2}{3}}u_{1},\varepsilon^{\frac{2}{3}}u_{2})^2}{(u_{1}u_{2})^{\frac{3}{2}}}\Theta_{1}(\varepsilon^{-\frac{2}{3}}x,u_{1},u_{2})^{-\frac{d}{p}}\text{d}x\text{d}u_{1}\text{d}u_{2}
<\infty.
\end{align*}  
\end{Lema}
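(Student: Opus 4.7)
The plan is to mimic the strategy of Lemma \ref{lem:inegraltightness}, tracking carefully where the critical value $H=3/4$ produces a logarithmic-in-$\varepsilon$ blow-up that will be absorbed by the $\log(1/\varepsilon)$ normalization. First I would perform the change of variable $\hat{x}:=\varepsilon^{-2/3}x$ and invoke the self-similarity identity $\mu(\varepsilon^{2/3}\hat{x},\varepsilon^{2/3}u_{1},\varepsilon^{2/3}u_{2})=\varepsilon\,\mu(\hat{x},u_{1},u_{2})$, valid at $H=3/4$. The prefactor $\varepsilon^{-8/3}$ then combines with $\varepsilon^{2/3}$ coming from $dx=\varepsilon^{2/3}d\hat{x}$ and $\varepsilon^{2}$ coming from $\mu^{2}$, producing a total power of $\varepsilon^{0}$, so the statement reduces to
\begin{align*}
\sup_{\varepsilon\in(0,1/e)} \frac{1}{\log(1/\varepsilon)} \int_{0}^{\varepsilon^{-2/3}T}\!\!\int_{\R_{+}^{2}} \Psi(\hat{x},u_{1},u_{2})\,du_{1}\,du_{2}\,d\hat{x} < \infty,
\end{align*}
where $\Psi(x,u_{1},u_{2}):=\mu(x,u_{1},u_{2})^{2}(u_{1}u_{2})^{-3/2}\Theta_{1}(x,u_{1},u_{2})^{-d/p}$ is precisely the integrand studied in Lemma \ref{lem:inegraltightness}.

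Next I would split $\R_{+}^{3}=\Sc_{1}\cup\Sc_{2}\cup\Sc_{3}$ as in \eqref{eq:Scdef}. The estimates for $\Sc_{1}$ and $\Sc_{2}$ established in the proof of Lemma \ref{lem:inegraltightness} hold for every $H\in(3/(2d),1)$ under the constraint $0<p<4Hd/3$; at $H=3/4$ this reads exactly $0<p<d$, our standing hypothesis. Consequently $\int_{\Sc_{1}\cup\Sc_{2}}\Psi$ is finite and independent of $\varepsilon$, so its contribution divided by $\log(1/\varepsilon)$ tends to $0$ as $\varepsilon\to 0$ and is bounded on $(0,1/e)$.

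The substantive work lies in the $\Sc_{3}$-piece. After the affine change of variables $(a,b,c):=(u_{1},\hat{x}-u_{1},u_{2})$, one must bound $\int_{\R_{+}^{3}}\Indi{\{a+b\leq\varepsilon^{-2/3}T\}}\Psi(a+b,a,c)\,da\,db\,dc$. I would subdivide $\R_{+}^{3}$ into $\{b\leq a\wedge c\}$, $\{a\leq b\leq c\}$, $\{c\leq b\leq a\}$ and $\{b\geq a\vee c\}$, and reuse the pointwise estimates of the $i=3$ case of Lemma \ref{lem:inegraltightness}. On $\{b\leq a\wedge c\}$, the Cauchy--Schwarz bound $\mu\leq(ac)^{3/4}$ together with \eqref{eq:inequR31} yields $\Psi(a+b,a,c)\leq C(1+a^{3/2}+c^{3/2}+(ac)^{3/2})^{-d/p}$, integrable over $\R_{+}^{3}$ uniformly in $\varepsilon$ under $p<d$. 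On each of the three remaining subregions, Lemma \ref{ineq:techR} supplies $\mu(a+b,a,c)\leq k\,ac\,b^{-1/2}$, whence
\begin{align*}
\Psi(a+b,a,c) \leq C\,b^{-1}(ac)^{1/2}\bigl(1+a^{3/2}+c^{3/2}+(ac)^{3/2}\bigr)^{-d/p}.
\end{align*}
The $(a,c)$-integral of the right-hand side is finite precisely when $p<d$ (the decisive check is $\int\!\!\int(ac)^{1/2}(1+(ac)^{3/2})^{-d/p}\,da\,dc<\infty$, which holds iff $3d/(2p)>3/2$), while the $b$-integral over its effective range contributes at most $\tfrac{2}{3}\log(1/\varepsilon)+\log T+O(1)$. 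Dividing by $\log(1/\varepsilon)$ and taking $\sup_{\varepsilon\in(0,1/e)}$ yields a finite constant, as desired.

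The main obstacle will be organizing the decomposition so that the $b^{-1}$ singularity at $b=0$ is never integrated: the subregion $\{b\leq a\wedge c\}$ must absorb every neighborhood of $\{b=0\}$, while on the three other subregions the lower bound $b\gtrsim a\vee c$ (or $\gtrsim a$, or $\gtrsim c$) is in force, producing logarithmic rather than divergent $b$-integrals. With this bookkeeping, the $\log$ growth of the $\Sc_{3}$-integral is exactly linear in $\log(\varepsilon^{-2/3}T)$, which matches the normalization factor appearing in the statement.
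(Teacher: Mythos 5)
Your proposal is correct and follows the same skeleton as the paper's proof: the scaling $\hat{x}:=\varepsilon^{-2/3}x$ combined with the homogeneity $\mu(\varepsilon^{2/3}\hat{x},\varepsilon^{2/3}u_{1},\varepsilon^{2/3}u_{2})=\varepsilon\,\mu(\hat{x},u_{1},u_{2})$, the cone decomposition $\R_{+}^{3}=\Sc_{1}\cup\Sc_{2}\cup\Sc_{3}$, the appeal to Lemma \ref{lem:inegraltightness} for $i=1,2$ (where $p<\frac{4Hd}{3}$ indeed reads $p<d$ at $H=\frac{3}{4}$, and $H=\frac34$ lies in the lemma's admissible range since $d\geq3$), and the identification of the logarithmic growth with an $x^{-1}$-integral up to $\varepsilon^{-2/3}T$ on $\Sc_{3}$. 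Where you genuinely diverge is in the mechanics of the $\Sc_{3}$ estimate: you use the gap bound $\mu(a+b,a,c)\leq k\,ac\,b^{-1/2}$ of \eqref{eq:inequR32}, which forces the four-fold subdivision of $\Sc_{3}$ and a separate Cauchy--Schwarz argument on $\{b\leq a\wedge c\}$ to avoid integrating the $b^{-1}$ singularity, whereas the paper invokes the uniform bound $\mu(x,u_{1},u_{2})\leq k(x+u_{1}+u_{2})^{2H-2}u_{1}u_{2}$ of \eqref{eq:inequR321}, valid on all of $\Sc_{3}$ for $H>\frac12$; with that bound a single split of the $x$-range at $x=1$ suffices, using $(x+u_{1}+u_{2})^{-1}\leq\frac{1}{2}(u_{1}u_{2})^{-1/2}$ for $x\leq1$ and $(x+u_{1}+u_{2})^{-1}\leq x^{-1}$ for $x\geq1$. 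The paper's route is shorter and needs no case analysis; yours makes explicit how the near-diagonal region $\{b\leq a\wedge c\}$ absorbs the singularity, at the cost of extra bookkeeping (the truncated $b$-integrals also generate terms like $|\log(a\vee c)|$ when $a,c$ are small, which you should note remain integrable against $(ac)^{1/2}(1+a^{3/2})^{-d/p}(1+c^{3/2})^{-d/p}$).

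Two small blemishes to repair in the write-up, neither fatal. First, your parenthetical ``decisive check'' $\int\!\!\int(ac)^{1/2}(1+(ac)^{3/2})^{-d/p}\,da\,dc<\infty$ is false as stated: on the region $\{ac\leq1,\ a\geq1\}$ the integrand is bounded below by a constant times $(ac)^{1/2}$ and the integral diverges for every $p$. The correct check keeps all terms and uses the factorization $1+a^{3/2}+c^{3/2}+(ac)^{3/2}=(1+a^{3/2})(1+c^{3/2})$, reducing matters to $\int_{0}^{\infty}a^{1/2}(1+a^{3/2})^{-d/p}\,da<\infty$, which holds iff $\frac{3d}{2p}>\frac{3}{2}$, i.e.\ iff $p<d$ --- the condition you announced, reached by the wrong test integral. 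Second, on $\{b\leq a\wedge c\}$ your pointwise bound $C\bigl(1+a^{3/2}+c^{3/2}+(ac)^{3/2}\bigr)^{-d/p}$ is independent of $b$ and therefore \emph{not} integrable over $\R_{+}^{3}$; what is true, and what you evidently intend, is that integrating $b$ over $[0,a\wedge c]$ produces the factor $a\wedge c\leq\sqrt{ac}$, after which the $(a,c)$-integral is finite under $p<d$ by the factorization above.
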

\begin{proof}
Denote by $\kappa_{\varepsilon}(x,u_{1},u_{2})$ the function 
\begin{align*}
\kappa_{\varepsilon}(x,u_{1},u_{2})
  &:=\frac{\varepsilon^{-8/3}}{\log(1/\varepsilon)}\mu(x,\varepsilon^{\frac{2}{3}}u_{1},\varepsilon^{\frac{2}{3}}u_{2})^2(u_{1}u_{2})^{-\frac{3}{2}}\Theta_{1}(\varepsilon^{-\frac{2}{3}}x,u_{1},u_{2})^{-\frac{d}{p}}. 
\end{align*}

As in Lemma \ref{lem:inegraltightness2}, it suffices to show that 
\begin{align}\label{ineq:intsup2log}
\sup_{\varepsilon\in(0,1)}\int_{\R_{+}^{2}}\int_{0}^{T}\Indi{\Sc_{i}}(x,\varepsilon^{\frac{2}{3}}u_{1},\varepsilon^{\frac{2}{3}}u_{2})\kappa_{\varepsilon}(x,u_{1},u_{2})dxdu_{1}du_{2}<\infty,
\end{align}
for $i=1,2,3$, where the regions $\Sc_{i}$ are defined by \eqref{eq:Scdef}.  The cases $i=1,2$ are handled similarly to Lemma \ref{lem:inegraltightness2}, so it suffices to prove \eqref{ineq:intsup2} in the case $i=3$. 

Suppose $(x,\varepsilon^{\frac{2}{3}}u_{1},\varepsilon^{\frac{2}{3}}u_{2})\in\Sc_{3}$. Then, by Lemma \ref{ineq:techR}, there exists a constant $C>0$, such that 
\begin{align*}
\Abs{\mu(x,\varepsilon^{\frac{2}{3}}u_{1},\varepsilon^{\frac{2}{3}}u_{2})}
  &\leq C\varepsilon^{4/3}(x+\varepsilon^{\frac{2}{3}}u_{1}+\varepsilon^{\frac{2}{3}}u_{2})^{-\frac{1}{2}}u_{1}u_{2}\\
	&= C\varepsilon(\varepsilon^{-\frac{2}{3}}x+u_{1}+u_{2})^{-\frac{1}{2}}u_{1}u_{2}
\end{align*}
In addition, by Lemma \ref{lem:local_non_determinism} we have that 
$u_{1}^{\frac{3}{2}}u_{2}^{\frac{3}{2}}-\mu(\varepsilon^{-\frac{2}{3}}x,u_{1},u_{2})^2\geq \delta (u_{1}u_{2})^{\frac{3}{2}}$, for some $\delta>0$. Therefore, we conclude that there exists a constant $C>0$, such that 
\begin{align*}
\kappa_{\varepsilon}(x,u_{1},u_{2})
  &\leq \frac{\varepsilon^{-\frac{2}{3}}C^2}{\log(1/\varepsilon)}(\varepsilon^{-\frac{2}{3}}x+u_{1}+u_{2})^{-1}\sqrt{u_{1}u_{2}}\left(1+u_{1}^{\frac{3}{2}}+u_{2}^{\frac{3}{2}}+u_{1}^{\frac{3}{2}}u_{2}^{\frac{3}{2}}-\mu(x,u_{1},u_{2})^2\right)^{-\frac{d}{p}}\\
	&\leq \frac{\varepsilon^{-\frac{2}{3}}C^2\delta^{-\frac{d}{p}}}{\log(1/\varepsilon)}(\varepsilon^{-\frac{2}{3}}x+u_{1}+u_{2})^{-1}\sqrt{u_{1}u_{2}}\left(1+u_{1}^{\frac{3}{2}}+u_{2}^{\frac{3}{2}}+u_{1}^{\frac{3}{2}}u_{2}^{\frac{3}{2}}\right)^{-\frac{d}{p}}.
\end{align*}
Consequently, there exists a constant $C>0$, such that 
\begin{multline*}
\int_{\R_{+}^{2}}\int_{0}^{T}\Indi{\Sc_{i}}(x,u_{1},u_{2})\kappa_{\varepsilon}(x,u_{1},u_{2})dxdu_{1}du_{2}\\
  \leq \frac{C\varepsilon^{-\frac{2}{3}}}{\log(1/\varepsilon)}\int_{0}^{T}\int_{\R_{+}^{2}}(\varepsilon^{-\frac{2}{3}}x+u_{1}+u_{2})^{-1}\sqrt{u_{1}u_{2}}\left(1+u_{1}^{\frac{3}{2}}+u_{2}^{\frac{3}{2}}+u_{1}^{\frac{3}{2}}u_{2}^{\frac{3}{2}}\right)^{-\frac{d}{p}}. 
\end{multline*}
Hence, making the change of variable $\widetilde{x}:=\varepsilon^{-\frac{2}{3}}x$, we obtain
\begin{multline}\label{eq:Lineq}
\int_{\R_{+}^{2}}\int_{0}^{T}\Indi{\Sc_{i}}(x,u_{1},u_{2})\kappa_{\varepsilon}(x,u_{1},u_{2})dxdu_{1}du_{2}\\
\begin{aligned}
  &\leq \frac{C}{\log(1/\varepsilon)}\int_{\R_{+}^{2}}\int_{0}^{\varepsilon^{-\frac{2}{3}}T}(x+u_{1}+u_{2})^{-1}\sqrt{u_{1}u_{2}}\left(1+u_{1}^{\frac{3}{2}}+u_{2}^{\frac{3}{2}}+u_{1}^{\frac{3}{2}}u_{2}^{\frac{3}{2}}\right)^{-\frac{d}{p}}dxdu_{1}du_{2}\\
	&= \frac{C}{\log(1/\varepsilon)}\int_{\R_{+}^{2}}\int_{0}^{1}(x+u_{1}+u_{2})^{-1}\sqrt{u_{1}u_{2}}\left(1+u_{1}^{\frac{3}{2}}+u_{2}^{\frac{3}{2}}+u_{1}^{\frac{3}{2}}u_{2}^{\frac{3}{2}}\right)^{-\frac{d}{p}}dxdu_{1}du_{2}\\
	&+ \frac{C}{\log(1/\varepsilon)}\int_{\R_{+}^{2}}\int_{1}^{\varepsilon^{-\frac{2}{3}}T}(x+u_{1}+u_{2})^{-1}\sqrt{u_{1}u_{2}}\left(1+u_{1}^{\frac{3}{2}}+u_{2}^{\frac{3}{2}}+u_{1}^{\frac{3}{2}}u_{2}^{\frac{3}{2}}\right)^{-\frac{d}{p}}dxdu_{1}du_{2}.
\end{aligned}
\end{multline}
Applying the inequalities $(x+u_{1}+u_{2})^{-1}\leq (u_{1}+u_{2})^{-1}\leq \frac{1}{2}(u_{1}u_{2})^{-\frac{1}{2}}$ for $x\in[0,1]$, and $(x+u_{1}+u_{2})^{-1}\leq x^{-1}$ for $x\geq 1$, in the first and second terms in the right-hand side of \eqref{eq:Lineq}, and then integrating the variable $x$, we can show that
\begin{align*}
\int_{\R_{+}^{2}}\int_{0}^{T}\Indi{\Sc_{i}}(x,u_{1},u_{2})\kappa_{\varepsilon}(x,u_{1},u_{2})dxdu_{1}du_{2}
  &\leq C\int_{\R_{+}^{2}}\left(\frac{(u_{1}u_{2})^{-\frac{1}{2}}+\frac{2}{3}\log(1/\varepsilon)+ \log(T)}{\log(1/\varepsilon)}\right)\\
	&\times\sqrt{u_{1}u_{2}}\left(1+u_{1}^{\frac{3}{2}}+u_{2}^{\frac{3}{2}}+u_{1}^{\frac{3}{2}}u_{2}^{\frac{3}{2}}\right)^{-\frac{d}{p}}dxdu_{1}du_{2}, 
\end{align*}
and consequently, for every $\varepsilon<1/e$,
\begin{multline*}
\int_{\R_{+}^{2}}\int_{0}^{T}\Indi{\Sc_{i}}(x,u_{1},u_{2})\kappa_{\varepsilon}(x,u_{1},u_{2})dxdu_{1}du_{2}\\
\begin{aligned}
  &\leq C\int_{\R_{+}^{2}}\left((u_{1}u_{2})^{-\frac{1}{2}}+\log(T)\right)\sqrt{u_{1}u_{2}}\left(1+u_{1}^{\frac{3}{2}}+u_{2}^{\frac{3}{2}}+u_{1}^{\frac{3}{2}}u_{2}^{\frac{3}{2}}\right)^{-\frac{d}{p}}dxdu_{1}du_{2}. 
\end{aligned}
\end{multline*}
The right-hand side of the previous inequality is finite due to the condition $0<p<d$. This finishes the proof of \eqref{ineq:intsup2log}.
\end{proof}


\end{document}